\documentclass[12pt]{amsart}

\usepackage{amssymb,latexsym}
\usepackage[enableskew]{youngtab}
\usepackage{verbatim,euscript}
\usepackage{fullpage,color}

\usepackage[colorlinks=true,linkcolor=blue,urlcolor=violet,citecolor=magenta]{hyperref}

\numberwithin{equation}{section}

\tolerance=4000

\input {cyracc.def}
\font\tencyr=wncyr10 
\font\tencyi=wncyi10 
\font\tencysc=wncysc10 
\def\rus{\tencyr\cyracc}
\def\rusi{\tencyi\cyracc}
\def\rusc{\tencysc\cyracc}

\newtheorem{thm}{Theorem}[section]
\newtheorem{lm}[thm]{Lemma}
\newtheorem{cl}[thm]{Corollary}
\newtheorem{prop}[thm]{Proposition}
\newtheorem{conj}[thm]{Conjecture}

\theoremstyle{remark}

\theoremstyle{definition}
\newtheorem{ex}[thm]{Example}
\newtheorem{df}{Definition}
\newtheorem{rmk}[thm]{Remark}

\definecolor{my_color}{rgb}{0,0.5,0.5}
\definecolor{MIXT}{rgb}{0.4,0.3,0.6}
\definecolor{blue-violet}{rgb}{0.54, 0.17, 0.89}
\definecolor{darkviolet}{rgb}{0.58, 0.0, 0.83}
\definecolor{violet-ryb}{rgb}{0.53, 0.0, 0.69}
\definecolor{violet}{rgb}{0.62, 0.0, 1.0}



\newcommand {\be}{{\mathfrak b}}
\newcommand {\ce}{{\mathfrak c}}

\newcommand {\g}{{\mathfrak g}}
\newcommand {\h}{{\mathfrak h}}

\newcommand {\el}{{\mathfrak l}}

\newcommand {\fN}{{\mathfrak N}}
\newcommand {\p}{{\mathfrak p}}

\newcommand {\te}{{\mathfrak t}}
\newcommand {\ut}{{\mathfrak u}}

\newcommand {\gln}{{\mathfrak{gl}}_n}
\newcommand {\sln}{{\mathfrak{sl}}_n}
\newcommand {\sltn}{{\mathfrak{sl}}_{2n}}

\newcommand {\spn}{{\mathfrak{sp}}_{2n}}
\newcommand {\son}{{\mathfrak{so}}_{n}}

\newcommand {\sone}{{\mathfrak{so}}_{2n}}


\newcommand {\esi}{\varepsilon}
\newcommand {\ap}{\alpha}

\newcommand {\lb}{\lambda}




\newcommand {\cI}{{\mathcal I}}

\newcommand {\co}{{\mathcal O}}


\newcommand {\BC}{{\mathbb C}}
\newcommand {\BZ}{{\mathbb Z}}

\newcommand {\ad}{{\mathrm{ad\,}}}

\newcommand {\codim}{{\mathrm{codim}}}

\newcommand {\Ind}{{\mathsf{Ind}}}
\newcommand {\Lie}{{\mathrm{Lie\,}}}
\newcommand {\rk}{{\mathsf{rk\,}}}

\newcommand {\tri}{\mathfrak{sl}_2}
\newcommand {\GR}[2]{{\textrm{{\sf\bfseries #1}}}_{#2}}
\newcommand {\GRt}[2]{{\tilde{\textrm{{\sf\bfseries #1}}}}_{#2}}
\newcommand {\ov}{\overline}
\newcommand {\un}{\underline}

\newcommand {\beq}{\begin{equation}}
\newcommand {\eeq}{\end{equation}}

\newcommand{\curge}{\succcurlyeq}
\newcommand{\curle}{\preccurlyeq}
\renewcommand{\le}{\leqslant}
\renewcommand{\ge}{\geqslant}

\newcommand{\ph}{\p_{\langle h\rangle}}
\newcommand{\eus}{\EuScript}
\newcommand{\blb}{\boldsymbol{\lb}}


\newenvironment{Dn}[6]{%
{\small\begin{tabular}{@{}c@{}}
{#1}--$\,\cdots$ --{#2}--{#3}--\lower3.5ex\vbox{\hbox{{#4}\rule{0ex}{2.5ex}}
\hbox{\hspace{0.4ex}\rule{.1ex}{1ex}\rule{0ex}{1.4ex}}\hbox{{#6}\strut}}--{#5}
\end{tabular}}}

\newenvironment{E6}[6]{%
{\small\begin{tabular}{@{}c@{}}
{#1}--{#2}--\lower3.5ex\vbox{\hbox{{#3}\rule{0ex}{2.5ex}}
\hbox{\hspace{0.4ex}\rule{.1ex}{1ex}\rule{0ex}{1.4ex}}\hbox{{#6}\strut}}--{#4}--{#5}
\end{tabular}}}

\newenvironment{E7}[7]{%
{\small\begin{tabular}{@{}c@{}}
{#1}--{#2}--{#3}--\lower3.5ex\vbox{\hbox{{#4}\rule{0ex}{2.5ex}}
\hbox{\hspace{0.4ex}\rule{.1ex}{1ex}\rule{0ex}{1.4ex}}\hbox{{#7}\strut}}--{#5}--{#6}
\end{tabular}}}

\newenvironment{E8}[8]{%
{\small\begin{tabular}{@{}c@{}}
{#1}--{#2}--{#3}--{#4}--\lower3.5ex\vbox{\hbox{{#5}\rule{0ex}{2.5ex}}
\hbox{\hspace{0.4ex}\rule{.1ex}{1ex}\rule{0ex}{1.4ex}}\hbox{{#8}\strut}}--{#6}--{#7}
\end{tabular}}}


\begin{document}
\setlength{\parskip}{2pt plus 4pt minus 0pt}
\hfill {\scriptsize February 26, 2018} 
\vskip1.5ex

\title{Nilpotent subspaces and nilpotent orbits}
\author[D.\,Panyushev]{Dmitri I. Panyushev}
\address[D.P.]
{Institute for Information Transmission Problems of the R.A.S, Bolshoi Karetnyi per. 19, 
127051 Moscow,   Russia}
\email{panyushev@iitp.ru}
\author[O.\,Yakimova]{Oksana S. Yakimova}
\address[O.Y.]{Institut f\"ur Mathematik, Friedrich-Schiller-Universit\"at Jena,  07737 Jena, 
Deutschland}
\email{oksana.yakimova@uni-jena.de}
\thanks{The research of the first author was carried out at the IITP RAS at the expense of the Russian Foundation for Sciences (project {\rus N0} 14-50-00150). 
The second author is partially supported by the DFG priority programme SPP 1388 
``Darstellungstheorie" and by the Graduiertenkolleg GRK 1523 ``Quanten- und Gravitationsfelder".}
\keywords{Orbital variety, induced orbit, polarisation, Dynkin ideal}
\subjclass[2010]{17B08, 17B20}

\begin{abstract}
Let $G$ be a semisimple complex algebraic group with Lie algebra $\g$. For a nilpotent $G$-orbit $\co\subset\g$, 
let $d_\co$ denote the maximal dimension of a subspace $V\subset \g$ that is contained in the closure of 
$\co$. In this note, we prove that $d_\co \le \frac{1}{2}\dim\co$ and this upper bound is attained if and 
only if $\co$ is a Richardson orbit. Furthermore, if $V$ is $B$-stable and 
$\dim V= \frac{1}{2}\dim\co$, then $V$ is the nilradical of a polarisation of $\co$. Every nilpotent orbit 
closure has a distinguished $B$-stable subspace constructed via an $\tri$-triple, 
which is called the {\it Dynkin ideal}. We then characterise the 
nilpotent orbits $\co$ such that the Dynkin ideal {\bf (1)} has the minimal 
dimension among all $B$-stable subspaces $\ce$ such that $\ce\cap\co$ is dense in $\ce$, or {\bf (2)} 
is the only  $B$-stable subspace $\ce$ such that $\ce\cap\co$ is dense in $\ce$.
\end{abstract}
\maketitle

\section*{Introduction}

\noindent
Let $G$ be a connected complex semisimple algebraic group with Lie algebra $\g$. 
We fix a Borel subgroup $B\subset G$, a maximal torus $T\subset B$, and the corresponding 
triangular decomposition $\g=\ut^-\oplus\te\oplus\ut$. Here $\Lie(B)=\be=\te\oplus\ut$.
Let $\fN$ denote the cone of nilpotent elements of $\g$ and $\fN/G$ the (finite) set of $G$-orbits
in $\fN$. A subspace $V$ of $\g$ is said to be {\it nilpotent\/} if
$V\subset \fN$.  Then $\ov{G{\cdot}V}$ is the closure of a nilpotent orbit. If $\ov{G{\cdot}V}=\ov{\co}$ for 
some $\co\in\fN/G$, then we say that $V$ is {\it associated\/} with $\co$.
In general, one has to distinguish the numbers 
$d_\co=\max\{\dim V\mid V\subset \ov{\co}\}$ and 
$\bar d_\co=\max\{\dim V\mid V\subset \ov{\co} \ \& \ V\cap\co\ne \varnothing\}=
\max\{\dim V\mid \ov{G{\cdot}V}=\ov{\co}\}$. Clearly, $\bar d_\co\le d_\co$.

In this article, we study the set of nilpotent subspaces associated with a given $\co$. Our 
main observation is that if $V\subset\ov{\co}$, then $\dim V\le \frac{1}{2}\dim\co$ 
and this upper bound is attained if and only if $\co$ is a Richardson orbit. 
Furthermore, if $V$ is $B$-stable and $\dim V= \frac{1}{2}\dim\co$, then $V$ is the nilradical of a 
polarisation of $\co$ (Theorem~\ref{thm:main-half}). In other words, $d_\co\le \frac{1}{2}\dim\co$ and 
$\co\subset \fN$ is Richardson  if and only if\/ $\bar d_\co=d_\co=\frac{1}{2}\dim\co$.
Our upper bound for $\dim V$ generalises a classical result of Gerstenhaber for $\sln$ \cite{nil-4}.
Recall that  $\co\in\fN/G$ is said to be {\it Richardson},  
if there is a parabolic subalgebra $\p$, with nilradical $\p^{nil}$, such that $\co\cap\p^{nil}$ is dense in 
$\p^{nil}$.  Then $\p$ is called a {\it polarisation\/} of $\co$ and $\dim\co=2\dim\p^{nil}$. Without loss of 
generality, one may assume that a polarisation $\p$ is {\it standard\/} (i.e., $\p\supset\be$) and then $\p^{nil}\subset \ut$ 
is $B$-stable. 

This result suggests that it is interesting to study $B$-stable subspaces associated with $\co$.
Let $\cI(\be)$ denote the finite set of all $B$-stable subspaces (=\,Ó$\be$-ideals) of $\ut$. 
If $V$ is nilpotent and $B$-stable, then $V\subset\ut$, i.e., $V\in\cI(\be)$. Restricting ourselves with the $B$-stable 
subspaces associated with $\co$, we obtain a subset $\cI(\be)_\co$ and the partition
$ \cI(\be)=\bigsqcup_{\co\in\fN/G}\cI(\be)_\co$. We regard $\cI(\be)$ as a poset with respect to inclusion, and then 
each $\cI(\be)_\co$ is a subposet of $\cI(\be)$. The sets $\cI(\be)_\co$ were first considered by 
Sommers~\cite{som}, who called them "equivalence classes of ideals".  Using a suitable $\tri$-triple and 
$\BZ$-grading of $\g$ associated with $\co$, one defines a special element of $\cI(\be)_\co$, which is
called the {\it Dynkin ideal}, denoted $\ce_{\sf Dy}(\co)$.  We consider the following numbers related to
$\cI(\be)_\co$:
\begin{itemize}
\item $d_{\sf min}(\co)=\min\{\dim\ce\mid \ce\in \cI(\be)_{\co}\}$;
\item $d_{\sf Dy}(\co)=\dim \ce_{\sf Dy}(\co)$, the dimension of the Dynkin ideal. 
\item $d_{\sf max}(\co)=\max\{\dim\ce\mid \ce\in \cI(\be)_{\co}\}$;
\end{itemize}
Then $d_{\sf min}(\co)\le d_{\sf Dy}(\co)\le d_{\sf max}(\co)\le \frac{1}{2}\dim\co$ and 
$d_{\sf max}(\co)= \frac{1}{2}\dim\co$ if and only if $\co$ is Richardson. 
It is also interesting to study maximal and minimal elements of the poset $\cI(\be)_\co$, and
we notice that if $\co$ is induced 
from a nilpotent orbit $\co'$ in a Levi subalgebra $\el$, then the maximal elements of
$\cI(\be_\el)_{\co'}$ yield maximal elements of $\cI(\be)_\co$ (Lemma~\ref{lm:induction-poset}).

We say that $\co$ is {\it extreme}, if $d_{\sf min}(\co)= d_{\sf Dy}(\co)$. Using a general formula for
$d_{\sf min}(\co)$ suggested by Sommers, see Eq.~\eqref{eq:d-min}, and verified case-be-case in \cite{kaw86, som,fang14}, 
we provide two characterisations of the extreme 
orbits~(Prop.~\ref{prop:extrim}) and classify them in all simple Lie algebras. 
It is easily seen that the {\it principal\/} nilpotent orbit $\co_{pr}$ and the {\it minimal\/} 
nilpotent orbit $\co_{min}$ are always extreme.
An intriguing {\it a posteriori\/} fact is that if the highest root of $\g$ is fundamental, then
there are exactly three extreme orbits: $\co_{pr}$, $\co_{min}$, and yet another one, which is said to 
be {\it intermediate} and denoted $\co_{imd}$ (Theorem~\ref{thm:extrem-excepts}). Moreover, the intermediate orbits admit a uniform 
description via the principal nilpotent orbit in the Levi subalgebra associated with highest root 
(Prop.~\ref{prop:extrim-uniform}). The highest root is {\bf not} fundamental if and only if $\g$ is of type 
$\GR{A}{n}$ or $\GR{C}{n}$. The number of nonzero extreme orbits in $\spn$ is $n$; and for
$\sln$, this number is at least $[n/2]$, depending on the parity of $n$, see Theorem~\ref{thm:extrem-sl-sp}. 

We say that $\co$ is {\it lonely}, if $\ce_{\sf Dy}(\co)$ is the only element of $\cI(\be)_\co$.
Obviously, a lonely orbit is extreme, and using our classification of extreme orbits, we classify the lonely 
orbits in Section~\ref{sect:lonely}. It is easily seen that $\ut$ is the only $\be$-ideal associated with 
$\co_{pr}$; hence $\co_{pr}$ is always lonely. A complete description is the following, see 
Theorem~\ref{thm:all-lonely}: 
\\  \indent
(1) \ for $\GR{A}{n}$ ($n\ge 1$), $\GR{B}{n}$ ($n\ge 3$), and $\GR{F}{4}$, the only lonely orbit is $\co_{pr}$;
\\  \indent
(2) \ for $\GR{D}{n}$ ($n\ge 4$), $\GR{G}{2}$, and $\GR{E}{n}$ ($n=6,7,8$), the lonely orbits are $\co_{pr}$
and $\co_{imd}$; 
\\  \indent
(3) \ for $\GR{C}{n}$ ($n\ge 1$), all extreme orbits are lonely.
\\
In particular, $\co_{min}$ is lonely only  for the symplectic Lie algebras.
Recall that $\tri=\mathfrak{sp}_2$ and here $\co_{pr}=\co_{min}$.

{\bf Notation}.  Associated with our fixed triangular decomposition, there are the following objects:
$W$ is the Weyl group and $\Delta$ is the roots system of $(\g,\te)$. Then $\Delta^+\subset\Delta$ 
corresponds to $\ut$, $\Pi$ is the set of simple roots in $\Delta^+$, and $\g^\gamma$ is the root 
space corresponding to $\gamma\in\Delta$. 
If $\g$ is simple, then $\theta\in\Delta^+$ is the highest root. 
If $H\subset G$ and $e\in\g$, then $H_e$ is the centraliser of $e$ in $H$ and $\h_e$ is the centraliser of
$e$ in $\h=\Lie H$.

For any $\eus S\subset\Pi$, $\p\{\eus S\}$ stands for the standard parabolic subalgebra such that 
$\eus S$ is exactly the set of simple roots occurring in $\p\{\eus S\}^{nil}$. Then $\el\{\eus S\}$ is the 
{\it standard Levi subalgebra\/} of $\p\{\eus S\}$ (i.e., $\el\{\eus S\}\supset \te$ and $\Pi\setminus\eus S$ is 
the set of simple roots of $\el\{\eus S\}$). In particular, $\p\{\varnothing\}=\g$ and $\p\{\Pi\}=\be$.

Algebraic groups are denoted by capital Roman letters, and their Lie algebras are denoted by the corresponding small Gothic letters (and vice versa).
Our basic references for semisimple Lie algebras and their nilpotent orbits are \cite{cm, t41}.

\section{Preliminaries on nilpotent orbits}
\label{sect:prelim}

\noindent
\subsection{$\tri$-triples, gradings, and centralisers.}   \label{subs:sl2}
Recall the Dynkin-Kostant theory on $\tri$-triples and nilpotent orbits, 
see~\cite[Chapter\,3]{cm},\,\cite[Chapter\,6, \S\,2]{t41}.
For $e\in \fN\setminus\{0\}$, let $\{e,h,f\}$ be an $\tri$-triple (that is, $[h,e]=2e$, $[e,f]=h$, and $[h,f]=-2f$). 
The semisimple element $h$ determines the
$\BZ$-grading of   $\g=\bigoplus_{i\in\BZ} \g(h;i)$, where $\g(h;i)$ is the $i$-eigenspace of $\ad h$ in
$\g$  (hence $e\in \g(h;2)$). Set $\g(h;{\ge}k)=\bigoplus_{i\ge k} \g(h;i)$.
Replacing $\{e,h,f\}$ with a $G$-conjugate $\tri$-triple, we may assume that $h$ is {\it dominant}, i.e.,
$h\in\te$ and $\ap(h)\ge 0$ for all $\ap\in\Pi$. Such a triple is said to be {\it adapted\/} (to the chosen 
triangular decomposition of $\g$). Then $\ap(h)\in\{0,1,2\}$ and the {\it weighted Dynkin diagram\/} 
(=\,{\sf wDd}) is obtained by putting the integers $\ap(h)$ ($\ap\in\Pi$) at the corresponding nodes of the 
Dynkin diagram. 
\\ \indent 
If $h$ is dominant, then the subspaces $\g(h;{\ge}k)$ are $B$-stable. Moreover,
$\ph:=\g(h;{\ge }0)$ is a standard parabolic subalgebra,  $\g(h;0)$ 
is a standard Levi subalgebra of $\ph$, and $\ph^{nil}=\g(h;{\ge}1)\subset \ut$. Furthermore, 
$[\g(h;0),e]=\g(h;2)$ and $[\ph,e]=\g(h;{\ge}2)$. In terms of the corresponding connected groups
$G(h;0)$ and $P_{\langle h\rangle}$, this means that 
$G(h;0){\cdot}e$ is dense in $\g(h;2)$ and $P_{\langle h\rangle}{\cdot}e$ is dense in $\g(h;{\ge}2)$.
Note also that $\ph=\p\{\eus S_h\}$, where $\eus S_h=\{\ap\in \Pi\mid \ap(h)>0\}$.
For an adapted $\tri$-triple, we have $\be=\be(h;0)\oplus \g(h;{\ge}1)$, where
$\be(h;0)=\be\cap\g(h;0)$ is a Borel subalgebra of $\g(h;0)$.

An element $e\in \fN$ (or orbit $G{\cdot}e$) is {\it even}, if the eigenvalues of $\ad h$ are even. It is 
equivalent to that  $\g(h;1)=0$ or $[\ph,e]=\ph^{nil}$. 
Since $\dim\g_e=\dim\g(h;0)+\dim\g(h;1)$ and $\dim\g(h;i)=\dim\g(h;-i)$, we have
$\dim\g(h;{\ge}2)=(\dim\co-\dim\g(h;1))/2$. Therefore, $\co$ is even if and only if 
$\dim\g(h;{\ge}2)=\frac{1}{2}\dim\co$.

\subsection{Induced and Richardson orbits.}   \label{subs:induced}
Let $\co=G{\cdot}e$ be a nilpotent orbit in $\g$. Let $\p$ be a parabolic subalgebra of $\g$, $\el$ a Levi 
subalgebra of $\p$ and $\co'=L{\cdot}e'$ a nilpotent $L$-orbit in $[\el,\el]$. Then $\co$ is said to be {\it induced from}\/ $(\co', \el,\p)$, if
$\co$ is dense in $G{\cdot}(e'+\p^{nil})$. 
Basic results on the induction of nilpotent orbits can be found in~\cite[Chapter\,7]{cm}.
This construction depends only on (the conjugacy class of) $\el$. That is, 
if $\tilde\p$ is another parabolic subalgebra with the same Levi subalgebra $\el$, then
the dense $G$-orbits in $G{\cdot}(e'+\p^{nil})$ and $G{\cdot}(e'+\tilde\p^{nil})$ coincide. Therefore, one can omit $\p$ from notation and denote the induced orbit by $\co=\Ind_\el^\g(\co')$.
Furthermore,
\beq    \label{eq:dim-induced}
  \dim\co=\dim\co'+ 2\dim\p^{nil}=\dim\co'+(\dim\g-\dim\el) .
\eeq
Equivalently,  $\codim_\g(\co)=\codim_\el(\co')$.
A nilpotent orbit $\co$ is  {\it Richardson} if it is induced from the zero orbit in 
some Levi subalgebra (that is, $e'=0$ in the above construction). Any of the corresponding parabolic 
subalgebras $\p$ is called a {\it polarisation\/} of $\co$. Then $\dim\co=2\dim\p^{nil}$ and $P{\cdot}e$ is 
dense in $\p^{nil}$. Without loss of generality, we may always assume that $\p$ is standard and hence 
$\p^{nil}\subset\ut$. An even nilpotent orbit is Richardson, because one can take $\ph=\g(h;{\ge}0)$ as 
a polarisation. 

If $\co$ cannot be induced from a proper Levi subalgebra $\el\subset\g$ and some $\co'\subset\el$, then 
$\co$ is said to be {\it rigid}. All Richardson  and rigid orbits are known, 
and we will freely use that information in what follows. Namely, Kempken explicitly described induction 
in the classical Lie algebras in terms of partitions~\cite{ke83}, see also~\cite[Ch.\,7]{cm}. For the exceptional Lie algebras, the induction was investigated by Elashvili. An account of his results is given in
\cite[pp.\,171--177]{spalt82}. 

\subsection{Orbital varieties.}
For $\co\in\fN/G$, the irreducible components of $\co\cap\ut$ are called the {\it orbital varieties} of $\co$. It is known that $\co\cap\ut$ is of pure dimension $\frac{1}{2}\dim\co$~\cite{spalt77,St76}.
If $\co$ is Richardson  and $\p$ is a standard polarisation of it, then $\p^{nil} \subset \ov{\ut\cap\co}$ and
$\dim\p^{nil}=\frac{1}{2}\dim\co$ in view of~\eqref{eq:dim-induced}. 
Therefore, $\p^{nil}$ is the closure of an orbital variety of $\co$.
In particular, for the Richardson  orbits, the closure of an orbital variety is smooth.
We will prove below that the converse is also true.

\subsection{The closure relation.}
Letting $\co_1\leq \co_2$ if  $\co_1\subset\ov{\co_2}$, we make $\fN/G$ a finite poset.

For the classical Lie algebras, the description of $(\fN/G, \leq)$ via partitions is due to Gerstenhaber 
and Hesselink, see~\cite[6.2]{cm}. Explicit results for the exceptional types are due to Shoji and Mizuno.
The corresponding Hasse diagrams are depicted in \cite[pp.247--250]{spalt82}.

\section{An upper bound}         
\label{sect:main-bound}

\noindent
In this section, we obtain an upper bound on the dimension of a linear subspace sitting in the closure of
a nilpotent orbit.
We begin with a preliminary result.

\begin{lm}    \label{lm:orb-var}
If $\co\ne \{0\}$, then the closure of any orbital variety of $\co$ contains a line of the form 
$\g^\ap$, $\ap\in\Pi$. In particular, an orbital variety of a nontrivial orbit does not belong to $[\ut,\ut]$.
\end{lm}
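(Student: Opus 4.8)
The plan is to argue by contradiction, assuming that some orbital variety $\gO$ of $\co$ has its closure $\ov{\gO}$ contained in $[\ut,\ut]$, equivalently that $\ov{\gO}$ misses every simple root line $\g^\ap$. Since $\ov{\gO}$ is a $B$-stable irreducible subvariety of $\ut$, the torus $T$ acts on it, and I would look at the finitely many $T$-weights of functions, or rather the weights $\gamma\in\Delta^+$ such that $\g^\gamma$ meets $\ov{\gO}$ in something nonzero; the assumption says none of these is simple, so they all lie in $\Delta^+\setminus\Pi$, i.e.\ have height at least $2$.

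First I would recall that $\dim\ov{\gO}=\frac12\dim\co$ and that $\ov{\gO}$ is $B$-stable, hence invariant under $\ut$ as well. The key point to extract is a contradiction with dimension or with nilpotency. One clean route: take a generic point $x\in\gO$; then $\gO=\ov{B{\cdot}x}$ and $T_x\gO \subseteq \ut$ is spanned (as a $T$-module, after choosing $x$ generic so that its support meets a maximal antichain) by root lines, and the $B$-orbit map gives $\dim\gO = \dim\ut - \dim\be_x + \dim\te_x$-type count; pushing $x$ by the one-parameter subgroups $\exp(t\g^{-\ap})$ for $\ap\in\Pi$ stays inside $\fN$ but a first-order analysis of $[\g^{-\ap},x]$ produces a component along $\g^{\ap-(\text{something})}$ — I want to exhibit an element of $\g^{\ap}$ in the closure. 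The cleanest implementation is via the $B$-stable filtration of $\ut$ by $[\ut,\ut]$: if $\ov{\gO}\subseteq[\ut,\ut]$, then $\ov{\gO}$ is stable under the larger group generated by $B$ and the $\exp(t\g^{-\ap})$ restricted to the normaliser, because bracketing $[\ut,\ut]$ with $\g^{-\ap}$ lands in $\ut$ again for $\ap\in\Pi$; this would make $\ov{G{\cdot}\gO}$, hence $\ov\co$, lie in the $G$-span of $[\ut,\ut]$, but the principal nilpotent element (which has nonzero $\g^\ap$-components for all simple $\ap$) lies in no proper ideal, and more to the point, nothing in $\fN\setminus\{0\}$ has its whole $G$-orbit inside such a span — concretely, $\ov\co$ always contains $\co_{min}$, and a highest root vector $e_\theta$ is $G$-conjugate into $\g^\ap$ for any $\ap\in\Pi$ lying on the same connected component, which forces $\g^\ap\subseteq\ov\co$, contradicting the claim only if we can also place such a line inside the chosen orbital variety.

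So the real technical content is local: I would fix a generic $x\in\gO$ and analyse the tangent space $T_x(\ov{\gO})$, which has dimension $\frac12\dim\co$ and contains $[\be,x]=\te{\cdot}x+\ut{\cdot}x$. Since $x\in\co$, $[\g,x]$ has dimension $\dim\co$ and $[\be,x]=[\g,x]\cap\ut$ has dimension exactly $\frac12\dim\co$ by the orbital variety dimension theorem, so $T_x\ov{\gO}=[\be,x]$ and $\ov\gO$ is smooth at $x$ with $\gO=B{\cdot}x$ an orbit. Now, writing $x=\sum_{\gamma}x_\gamma$ with $x_\gamma\in\g^\gamma$, the assumption $x\in[\ut,\ut]$ means $x_\ap=0$ for all $\ap\in\Pi$. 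I would then show that the orbit $B{\cdot}x$ of such an $x$ still lies in $[\ut,\ut]$ (clear, $B$ preserves $[\ut,\ut]$), so $\ov\gO\subseteq[\ut,\ut]$ is self-consistent — meaning the contradiction must come from comparing dimensions of orbital varieties of $\co$ with those of orbital varieties inside $[\ut,\ut]$ viewed as a subvariety of $\ut$. Here is the decisive inequality: for the \emph{principal} orbit $\co_{pr}$ we have $\dim(\co_{pr}\cap\ut)=\dim\ut$, i.e.\ $\ut$ itself is the unique orbital variety and $\ov{\co_{pr}}\supseteq\ov\co\ni \gO$; but then $\gO$, being an irreducible subvariety of $\ut$ of dimension $\frac12\dim\co$, sitting inside $[\ut,\ut]$ which has dimension $\dim\ut-|\Pi|$, is not yet contradictory for small orbits. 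So dimension alone is insufficient; I expect the main obstacle to be precisely this, and I would resolve it by an \emph{inductive/parabolic} argument: pick $\ap\in\Pi$ with $\ap(h)>0$ appearing in the wDd-$\BZ$-grading attached to $\co$ (such $\ap$ exists since $\co\ne\{0\}$, as $\ph^{nil}=\g(h;\ge1)\ne 0$ forces some $\ap\in\eus S_h$), recall $P_{\langle h\rangle}{\cdot}e$ is dense in $\g(h;\ge 2)$ and that $\g(h;\ge 2)\subseteq\ov{\ut\cap\co}$ is a union of orbital varieties; then for the orbital variety $\gO$ containing a generic point of $\g(h;\ge 2)$ one sees a root line $\g^\beta$ with $\beta$ of height $1$ (the lowest root of $\g(h;\ge1)$ along the grading) inside $\ov\gO$; for a \emph{general} orbital variety I would invoke the $W$- or $B$-action permuting/relating orbital varieties together with Lemma-style transitivity. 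The cleanest finish is: every orbital variety closure of $\co$ is $B$-stable and irreducible of dimension $\frac12\dim\co$, hence equals $\ov{B{\cdot}x}$; apply the $\ap$-parabolic $P_\ap\supset B$ (with Levi $\tri$-subalgebra in the $\ap$-direction) and note $P_\ap{\cdot}\ov\gO$ is irreducible $B$-stable in $\fN$ of dimension $\le\frac12\dim\co+1$, contained in $\ov\co$; choosing $\ap$ so that this dimension genuinely exceeds $\frac12\dim\co$ would contradict purity of $\co\cap\ut$ unless $\ov\gO$ is already $P_\ap$-stable, i.e.\ $[\g^{-\ap},\ov\gO]\subseteq\ov\gO$, and iterating over all $\ap\in\Pi$ (using that the $\g^\ap$ generate $\g$ as a $\ut^-$-and-$\ut$ module) forces $\ov\gO$ to be $\fN\cap\g$-large, i.e.\ $\ov\gO=\ut$, whence $\co=\co_{pr}$ and $\ut\not\subseteq[\ut,\ut]$. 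I expect the bookkeeping in this last step — choosing the right simple root and tracking when $\dim P_\ap{\cdot}\ov\gO$ strictly grows — to be the crux, and I would streamline it by instead directly computing, for generic $x\in\gO$, that $[\g^{-\ap},x]\not\subseteq\ov\gO$ for some $\ap$ with $x_{\beta}\ne 0$ where $\beta-\ap\in\Delta^+$ is simple, landing a nonzero vector of $\g^{\ap}$ in $\ov\gO$ after a limit, contradicting $\ov\gO\subseteq[\ut,\ut]$; here the nilpotency of $\fN$ guarantees the limit stays in $\ov\co$, and the existence of such $\ap,\beta$ is a combinatorial fact about $\Delta^+$ (every $\gamma\in\Delta^+\setminus\Pi$ can be lowered to a simple root by subtracting simple roots while staying in $\Delta^+$).

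Summarising the structure I would write out: (i) reduce to showing some $\g^\ap\subseteq\ov\gO$, $\ap\in\Pi$; (ii) fix generic $x\in\gO$, establish $T_x\ov\gO=[\be,x]$ of dimension $\frac12\dim\co$ and $\gO=\ov{B{\cdot}x}$; (iii) if $x\notin\g^\ap+\dots$ for every simple $\ap$, pick the $T$-weight $\gamma$ of minimal height with $x_\gamma\ne 0$ and write $\gamma=\ap+\gamma'$ with $\ap$ simple, $\gamma'\in\Delta^+$ (possible since $\hot(\gamma)\ge 2$); (iv) act by $\exp(t\,\g^{-\ap})$ — or rather rescale and take $t\to\infty$ — to produce a nonzero limit vector in $\g^{\gamma-\ap}=\g^{\gamma'}$ inside $\ov\gO$, and repeat, strictly decreasing the minimal height, until a simple root line appears in $\ov\gO$, contradicting the hypothesis; the second sentence of the Lemma then follows since $[\ut,\ut]$ contains no $\g^\ap$ with $\ap\in\Pi$. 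The main obstacle is step (iv): making the limit argument legitimate requires that $\ov\gO$ be closed and $B$-stable (it is) and that the relevant structure constant $[\g^{-\ap},\g^\gamma]\to\g^{\gamma'}$ be nonzero on $x_\gamma$, which holds because $\gamma'=\gamma-\ap\in\Delta$ forces the $\sltwo$-string through $\gamma$ in the $\ap$-direction to be nontrivial; one must also ensure no cancellation from other weight components, which is handled by first projecting to the appropriate height-graded piece of $\ut$ (a $B$-equivariant operation preserving $\ov\gO$ modulo the higher part, or better, by choosing $\gamma$ of \emph{minimal} height so that lowering by $\ap$ cannot hit another occupied weight of smaller height).
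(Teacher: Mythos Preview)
Your proposal has a genuine gap at the decisive step (iv). You propose to act on a generic $x\in\gO$ by $\exp(t\,e_{-\ap})$ with $\ap\in\Pi$, rescale, and pass to a limit so as to produce a nonzero vector of $\g^{\gamma-\ap}$ lying in $\ov\gO$. But $e_{-\ap}\in\ut^-$, so $\exp(t\,e_{-\ap})\notin B$; the closure $\ov\gO$ is only $B$-stable, not $P_\ap$-stable, and the curve $t\mapsto\exp(t\,e_{-\ap}){\cdot}x$ leaves $\ov\gO$ as soon as $t\ne 0$. No rescaling by $T$ or by scalars brings it back, and the limit has no reason to lie in $\ov\gO$. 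Your earlier $P_\ap$-variant fails for the same reason in a different guise: $P_\ap{\cdot}\ov\gO$ is indeed irreducible of dimension at most $\tfrac12\dim\co+1$ inside $\ov\co$, but it is \emph{not} contained in $\ut$, so nothing contradicts the equidimensionality of $\co\cap\ut$. More structurally, every operation available inside $B$ either rescales weight components (via $T$) or \emph{raises} heights (via $U=\exp\ut$); there is simply no $B$-equivariant mechanism for lowering the minimal height of the support of $x$, so once $x\in[\ut,\ut]$ the whole closure $\ov{B{\cdot}x}$ stays in $[\ut,\ut]$. Your height-descent strategy therefore cannot work as stated. (A secondary issue: the claim in (ii) that $[\be,x]=[\g,x]\cap\ut$, equivalently that $\gO$ has a dense $B$-orbit, is unjustified and is known to fail for some orbital varieties outside type $\mathsf A$; but even granting it would not rescue step (iv).)

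The paper's argument is entirely different and much shorter: it invokes Joseph's theorem~\cite[9.6]{jos84} that every orbital variety closure has the form $\ov{B{\cdot}(\ut\cap{}^w\ut)}$ for some $w\in W$, and then uses the elementary root-system fact that whenever $w\ne w_0$ the subspace $\ut\cap{}^w\ut$ already contains some simple-root space $\g^\ap$ (take any $\ap\in\Pi$ with $w^{-1}\ap\in\Delta^+$). Joseph's structural result is doing the real work here: it supplies, for \emph{each} orbital variety, a $T$-stable linear subspace whose support reaches down to height one. I do not see an elementary substitute for this input.
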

\begin{proof}
For $w\in W$, let $^w\ut$ denote the $w$-conjugate of $\ut$. That is,
$^w\ut=\oplus_{\gamma:\ w^{-1}\gamma\in\Delta^+}\g^\gamma$.
By \cite[9.6]{jos84}, the closure of an orbital variety is of 
the form $\ov{B{\cdot}(\ut\cap\, ^w\ut)}$ for some $w\in W$. 
The subspace $\ut\cap ^w\!\ut$ is nonzero if and only if $w$ is {\bf not} the longest element of $W$, and in that case $\ut\cap ^w\!\ut$ contains a root subspace $\g^\ap$ for some $\ap\in\Pi$.
\end{proof}

\begin{thm}   \label{thm:main-half}
Let $\co=G{\cdot}e\subset\g$ be a nilpotent orbit and $V$ a subspace in $\ov{\co}$. 
\begin{itemize}
\item[\sf (i)] \ Then  $\dim V\le \frac{1}{2}\dim\co$ and there is also a $B$-stable subspace in $\ov{\co}$
of the same dimension.
If\/ $V$ is $B$-stable, then $V\subset\ut$.
\item[\sf (ii)] \ 
If\/ $\dim V= \frac{1}{2}\dim\co$, then
$\co$ is Richardson;
furthermore, if\/ $V$ is $B$-stable and
$\dim V= \frac{1}{2}\dim\co$, then $V=\p^{nil}$, where $\p$
is a standard polarisation of $\co$. 
\end{itemize}
\end{thm}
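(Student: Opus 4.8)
The plan is to treat (i) and (ii) together, reducing everything to the $B$-stable case and then invoking the orbital variety theory recalled in Section~\ref{sect:prelim}. First I would dispose of the reduction to $B$-stable subspaces. Given an arbitrary subspace $V\subset\ov\co$, consider the projectivisation $\PP(V)\subset\PP(\fN\cap\ov\co)$ and look at the Borel $B$ acting on $\ov\co$; since $\ov\co$ is an irreducible $G$-variety and $B$ has finitely many orbits on the projectivised nullcone only in small cases, the cleaner route is: the Grassmannian $\mathrm{Gr}(\dim V,\g)$ is projective, $G$ acts on it, and the subset $Z=\{U\in\mathrm{Gr}(\dim V,\g)\mid U\subset\ov\co\}$ is closed and $G$-stable, hence nonempty and complete, so it contains a $B$-fixed point by Borel's fixed point theorem. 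A $B$-fixed point of $\mathrm{Gr}(\dim V,\g)$ is a $B$-stable subspace $V'$ of the same dimension contained in $\ov\co$. Since $V'$ is $B$-stable and consists of nilpotent elements, it is $\te$-stable and contains no toral part, so $V'\subset\ut$, i.e.\ $V'\in\cI(\be)$. This proves the second and third assertions of (i) and reduces the dimension bound to the case $V\subset\ut$.

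So assume $V\subset\ut$ is $B$-stable with $\ov{G{\cdot}V}=\ov\co$. The key point is that $V$, being $B$-stable and irreducible and contained in $\ut\cap\ov\co$, lies inside the closure of some orbital variety of $\co$: indeed $G{\cdot}V$ is dense in $\ov\co$, so a generic point of $V$ lies in $\co$, hence $V\subset\ov{V\cap\co}\subset\ov{\ut\cap\co}$, and since $V$ is irreducible it is contained in one irreducible component, i.e.\ in $\ov{\mathfrak{v}}$ for an orbital variety $\mathfrak v$ of $\co$. By the purity result quoted in \S1.3 (Spaltenstein--Steinberg), $\dim\ov{\mathfrak v}=\frac12\dim\co$, whence $\dim V\le\frac12\dim\co$. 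This gives the bound in (i).

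For (ii), suppose $\dim V=\frac12\dim\co$. In the general (not necessarily $B$-stable) case, the reduction above produces a $B$-stable $V'\subset\ut$ of the same dimension $\frac12\dim\co$ with $\ov{G{\cdot}V'}=\ov\co$, so it suffices to handle the $B$-stable case and deduce $\co$ is Richardson from that. If $V\subset\ut$ is $B$-stable of dimension $\frac12\dim\co$, then by the previous paragraph $V\subset\ov{\mathfrak v}$ with $\dim\ov{\mathfrak v}=\frac12\dim\co=\dim V$, and both are irreducible closed, so $V=\ov{\mathfrak v}$: the closure of an orbital variety of $\co$ is a \emph{linear subspace}. Now I would invoke Joseph's description (as used in Lemma~\ref{lm:orb-var}) $\ov{\mathfrak v}=\ov{B{\cdot}(\ut\cap{}^w\ut)}$ for some $w\in W$: the point is that $\ut\cap{}^w\ut$ is already $B$-stable (it is a sum of root spaces $\g^\gamma$ with $\gamma,w^{-1}\gamma\in\Delta^+$, and $\Delta^+\cap w^{-1}\Delta^+$ is closed under adding positive roots since it is an intersection of two closed subsets of $\Delta^+$), so $B{\cdot}(\ut\cap{}^w\ut)=\ut\cap{}^w\ut$ is already closed and equals $V$. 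Thus $V=\bigoplus_{\gamma\in\Delta^+\cap\, w^{-1}\Delta^+}\g^\gamma$. The set $\Psi=\Delta^+\cap w^{-1}\Delta^+$ is closed and, being stable under adding elements of $\Delta^+$, its complement $\Delta^+\setminus\Psi=\Delta^+\cap w^{-1}(-\Delta^+)$ together with all of $-\Delta^+$ and the Cartan spans a parabolic; concretely $\mathfrak q:=\te\oplus\bigoplus_{\gamma\in\Delta^-\cup\Psi}\g^\gamma$ is a subalgebra — one checks $[\g^\alpha,\g^\beta]\subset\mathfrak q$ for $\alpha\in\Delta^-$, $\beta\in\Psi$ using closedness of $\Psi$ under $+\Delta^+$ — hence a standard parabolic with nilradical exactly $V$. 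Finally $\ov{G{\cdot}V}=\ov\co$ with $\dim V=\frac12\dim\co$ forces $G{\cdot}V$ dense in $\ov\co$ and $\dim(\g/\mathfrak q)=\dim V$, so by the induction formula \eqref{eq:dim-induced} (with $\co'=\{0\}$) the orbit $\co$ is exactly $\Ind_{\mathfrak l}^{\g}(0)$ where $\mathfrak l$ is a Levi of $\mathfrak q$, i.e.\ $\co$ is Richardson with standard polarisation $\p=\mathfrak q$ and $V=\p^{nil}$.

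The main obstacle I expect is the combinatorial step showing that when $\ov{\mathfrak v}$ is linear it must be an \emph{honest} nilradical rather than merely some $B$-stable subspace: one has to argue that $V=\ut\cap{}^w\ut$ is automatically a subalgebra and in fact the nilradical of a parabolic — i.e.\ that the closed subset $\Psi=\Delta^+\cap w^{-1}\Delta^+$ of the positive system, which is automatically closed under $+\Delta^+$, gives a parabolic. Verifying that $\Psi$ is closed under adding an arbitrary positive root (equivalently that $\mathfrak q$ above is a subalgebra) is the crux; everything else is bookkeeping with the dimension formula \eqref{eq:dim-induced} and the purity of $\co\cap\ut$.
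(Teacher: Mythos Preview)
Your argument for part (i) is essentially the paper's: Borel fixed point on the Grassmannian, then $V'\subset\ut$, then the Spaltenstein--Steinberg dimension formula for $\ut\cap\co$. Fine.

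Part (ii) has a genuine error. You assert that $\ut\cap{}^w\ut$ is $B$-stable because ``$\Delta^+\cap w\Delta^+$ is closed under adding positive roots since it is an intersection of two closed subsets of $\Delta^+$''. This is false: $w\Delta^+$ is closed under adding elements of $w\Delta^+$, not of $\Delta^+$. Concretely, take $\g=\mathfrak{sl}_3$ and $w=s_1s_2$; then $\Delta^+\cap w\Delta^+=\{\ap_2\}$, which is not an upper ideal since $\ap_2+\ap_1\in\Delta^+$ is missing. Correspondingly $\ut\cap{}^w\ut=\g^{\ap_2}$ is not $\be$-stable, and $\ov{B{\cdot}\g^{\ap_2}}=\g^{\ap_2}\oplus\g^{\ap_1+\ap_2}$ is strictly larger. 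So you cannot conclude $V=\ut\cap{}^w\ut$, and the subsequent construction of $\mathfrak q$ collapses. (Even if one had $V=\ut\cap{}^w\ut$, your $\mathfrak q$ would contain $\be^-$ rather than $\be$, and the verification that it is a subalgebra again rests on the same false closedness claim.)

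The paper bypasses Joseph's parametrisation entirely at this step. Knowing only that $V$ is a $\be$-ideal equal to the closure of an orbital variety, it invokes Lemma~\ref{lm:orb-var} to get $V\not\subset[\ut,\ut]$, hence $\Pi_V:=\{\ap\in\Pi\mid\g^\ap\subset V\}\ne\varnothing$. Since $V$ is $\be$-stable, $V\supset\p\{\Pi_V\}^{nil}$, so $V=V_\el\oplus\p\{\Pi_V\}^{nil}$ with $V_\el\subset\ut_\el$ a $\be_\el$-ideal. The dimension formula~\eqref{eq:dim-induced} forces $\dim V_\el=\tfrac12\dim\co_\el$, so $V_\el$ is again an orbital-variety closure in $\el$; but by construction $V_\el$ contains no simple root of $\el$, so $V_\el\subset[\ut_\el,\ut_\el]$, and Lemma~\ref{lm:orb-var} forces $\co_\el=\{0\}$, i.e.\ $V_\el=0$ and $V=\p\{\Pi_V\}^{nil}$. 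The ``crux'' you identify is real, but it is resolved by this induction-through-the-Levi argument, not by a direct combinatorial analysis of $\Delta^+\cap w\Delta^+$.
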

\begin{proof}
(i) \  Consider $V$ as a point in the Grassmannian ${\sf Gr}_d(\g)$ of 
$d$-dimensional subspaces of $\g$, where $d=\dim V$. By the Borel fixed point theorem, the closure of 
$B{\cdot}\{V\}$ in  ${\sf Gr}_d(\g)$ contains a $B$-fixed point $\{V'\}$. Then $V'$ is a $d$-dimensional 
$B$-stable subspace sitting in $\ov{\co}$. In particular, $V'$ is $T$-stable and nilpotent. Therefore,
$V'=\oplus_{\gamma\in M}\g^\gamma$ for some $M\subset\Delta$. 
Assume that $\gamma\in M\cap\Delta^-$. Then $e_{-\gamma}\in\be$, 
$e_{-\gamma}(V')\subset V'$, and $[e_{-\gamma}, e_\gamma]$ is a nonzero semisimple element in
$V'$. Hence $M\subset \Delta^+$ and $V'\subset \ut\cap \ov{\co}$.  
Since $\dim (\ut\cap\co)=\frac{1}{2}\dim\co$ for any nilpotent orbit~\cite{spalt77,St76}, we conclude that
$\dim V=\dim V'\le \frac{1}{2}\dim\co$.

(ii) By part (i), we may assume that
$V$ itself is $B$-stable, and thereby $V\subset\ut\cap \ov{\co}$. If $\dim V= \frac{1}{2}\dim\co$, then
$V$ is the closure of an orbital variety of $\co$. Hence $V\not\subset [\ut,\ut]$ in view of 
Lemma~\ref{lm:orb-var}.  Set $\Pi_V=\{\ap\in\Pi\mid \g^\ap\subset V \}$, and let $\p=\p\{\Pi_V\}$ be the 
corresponding standard parabolic subalgebra. 
Recall that $\Pi\setminus \Pi_V$ is the set of simple roots of standard Levi subalgebra 
$\el\{\Pi_V\}=:\el\subset\p$. Let $L$ be the Levi subgroup corresponding to $\el$.
Since $V$ is $B$-stable, we have $V\supset \p^{nil}$. Hence $V=(V\cap\el)\oplus \p^{nil}$ and
$V_\el=V\cap \el$ is a $B\cap L$-stable subspace of $\ut_\el:=\ut\cap\el$.

Let $\co_\el$ be the dense $L$-orbit in $L{\cdot}V_\el$. Then $V_\el\subset \ov{\co}$ and the 
above description shows that $\co=\Ind_\el^\g(\co_\el)$. Hence
\[
   2\dim V=\dim\co=\dim\co_\el+2\dim\p^{nil} .
\]
Since $\dim\co_\el=2(\dim V-\dim\p^{nil})=2\dim V_\el$, 
we see that $V_\el$ is the closure of an orbital variety of $\co_\el$. By the very construction of 
$\Pi_V$, $V_\el$ does not contain simple roots of $\el$, hence $V_\el\subset [\ut_\el,\ut_\el]$. 
It then follows from Lemma~\ref{lm:orb-var} that $\co_\el=\{0\}$ and hence $V_\el=\{0\}$. 
Thus, $V=\p^{nil}$, and we are done.
\end{proof}

We provide below some nice-looking consequences of this theorem.
\begin{cl}
If\/ $V\subset\fN$ is $B$-stable, then $\dim G{\cdot}V\ge 2\dim V$. Moreover, the equality holds if and only 
if\/ $V=\p^{nil}$ for a standard parabolic subalgebra $\p$.
\end{cl}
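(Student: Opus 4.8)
\noindent
The plan is to derive the corollary directly from Theorem~\ref{thm:main-half}, of which it is essentially a repackaging. First I would record the basic reduction. Since $V$ is an irreducible variety and $G$ is connected, $G{\cdot}V$ is an irreducible constructible subset of $\fN$, so $\ov{G{\cdot}V}=\ov{\co}$ for a unique nilpotent orbit $\co$, namely the dense orbit in $G{\cdot}V$, and $\dim G{\cdot}V=\dim\co$. As $V$ is $B$-stable and nilpotent, Theorem~\ref{thm:main-half}(i) gives $V\subset\ut$; and since $V$ is then a subspace of $\ov{\co}$, the same part yields $\dim V\le\frac12\dim\co=\frac12\dim G{\cdot}V$, i.e.\ $\dim G{\cdot}V\ge 2\dim V$.

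For the equality case I would argue in both directions. If $\dim G{\cdot}V=2\dim V$, then $V$ is a $B$-stable subspace of $\ov{\co}$ with $\dim V=\frac12\dim\co$, so Theorem~\ref{thm:main-half}(ii) applies and gives $V=\p^{nil}$ for a standard polarisation $\p$ of $\co$; in particular $\p$ is a standard parabolic subalgebra, which is one implication. Conversely, suppose $V=\p^{nil}$ for some standard parabolic subalgebra $\p$. Then the dense orbit $\co$ of $G{\cdot}\p^{nil}$ is by definition the Richardson orbit with polarisation $\p$, and \eqref{eq:dim-induced} applied with $\co'=\{0\}$ (equivalently, the Richardson dimension identity $\dim\co=2\dim\p^{nil}$ recalled in \S\ref{subs:induced}) gives $\dim G{\cdot}V=\dim\co=2\dim\p^{nil}=2\dim V$.

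I do not expect a substantial obstacle here, since every step is a direct citation of material already in place. The only points that warrant a word of care are: (a) interpreting ``$\dim G{\cdot}V$'' as the dimension of the constructible set $G{\cdot}V$, which equals $\dim\co$, so that Theorem~\ref{thm:main-half} applies verbatim to $V\subset\ov{\co}$; and (b) noting in the converse direction that taking $\p$ \emph{standard} is precisely what guarantees both that $V=\p^{nil}$ is $B$-stable and that the Richardson dimension formula is available without further conjugation.
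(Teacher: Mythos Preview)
Your proposal is correct and is precisely the argument the paper has in mind: the corollary is stated without proof in the paper as an immediate consequence of Theorem~\ref{thm:main-half}, and your write-up faithfully unpacks that reduction together with the Richardson dimension identity from \S\ref{subs:induced} for the converse direction.
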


\begin{cl}    \label{prop:equiv-glatt}
For a nilpotent otbit $\co$, the following properties are equivalent:
\begin{itemize}
\item[\sf (i)] \ $\co$ is Richardson;
\item[\sf (ii)] \  there is an orbital variety of $\co$ whose closure  is smooth;
\item[\sf (iii)] \ there is an orbital variety of $\co$ whose closure is an affine space.
\end{itemize}
\end{cl}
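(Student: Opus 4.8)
The plan is to deduce Corollary~\ref{prop:equiv-glatt} from Theorem~\ref{thm:main-half} and the observations recorded in Sections~\ref{subs:induced} and~\ref{subs:orbital} about orbital varieties of Richardson orbits. The implications (i)$\Rightarrow$(iii)$\Rightarrow$(ii) are essentially already in the text, so the real content is the converse (ii)$\Rightarrow$(i). We argue in a cycle (i)$\Rightarrow$(iii)$\Rightarrow$(ii)$\Rightarrow$(i).

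First, (i)$\Rightarrow$(iii): if $\co$ is Richardson, pick a standard polarisation $\p$. As noted just before Section~\ref{subs:orbital}, $\p^{nil}\subset\ov{\ut\cap\co}$ and $\dim\p^{nil}=\frac12\dim\co$, which is the pure dimension of $\ut\cap\co$; hence $\p^{nil}$ is the closure of an orbital variety of $\co$. But $\p^{nil}$ is a linear subspace of $\g$, i.e.\ an affine space, so (iii) holds. The step (iii)$\Rightarrow$(ii) is trivial, since an affine space is smooth.

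Now the main point, (ii)$\Rightarrow$(i). Let $Y=\ov{\gS}$ be the closure of an orbital variety $\gS$ of $\co$ that is smooth. By \cite[9.6]{jos84} (as recalled in the proof of Lemma~\ref{lm:orb-var}), $Y=\ov{B{\cdot}(\ut\cap\,^w\ut)}$ for some $w\in W$, so $Y$ is $B$-stable; it is also conical, being a closure of an orbital variety. Thus $0\in Y$ is a $B$-fixed point, hence a $T$-fixed point, and the tangent space $T_0 Y$ is a $T$-stable, $\ad$-nilpotent subspace of $\g$, i.e.\ a sum of positive root spaces; moreover $Y\subset\ov\co$ forces $T_0 Y\subset\ov\co$ as well once we use that $Y$ is conical — more precisely, a conical variety is contained in (the affine cone over) its tangent space at $0$ only after we know smoothness, which is exactly the hypothesis. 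By smoothness, $\dim T_0 Y=\dim Y=\frac12\dim\co$. The key sub-claim is that a smooth conical affine variety coincides with its tangent cone at the vertex, hence with $T_0 Y$ (a smooth point of a conical variety has tangent cone equal to tangent space, and for a cone the tangent cone at the vertex is the variety itself); therefore $Y=T_0Y$ is a linear subspace $V\subset\ut\cap\ov\co$ with $\dim V=\frac12\dim\co$. Theorem~\ref{thm:main-half}(ii) then gives $V=\p^{nil}$ for a standard polarisation $\p$, so $\co$ is Richardson, completing the cycle.

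The step I expect to be the main obstacle is the geometric sub-claim that a smooth conical (affine) variety $Y$ with vertex $0$ is a linear subspace. The clean route is: the tangent cone $C_0 Y$ of a cone $Y$ at its vertex equals $Y$ itself (the associated graded of the homogeneous coordinate ring is itself); on the other hand, at a smooth point the tangent cone equals the (embedded) tangent space, a linear subspace; hence $Y=C_0Y=T_0Y$ is linear. One should be slightly careful that ``conical'' here means stable under the $\mathbb{G}_m$-action by scalar multiplication on $\g$ — which holds because $\gS\subset\co$ and nilpotent orbit closures are conical, and $B{\cdot}(\ut\cap\,^w\ut)$ is visibly scaling-stable. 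With this in hand the rest is bookkeeping: identifying $V$ inside $\ut$, invoking the pure-dimensionality $\dim(\ut\cap\co)=\frac12\dim\co$ to see $V$ is a full orbital variety closure, and then quoting Theorem~\ref{thm:main-half}(ii).
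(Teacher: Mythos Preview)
Your argument is correct and coincides with the paper's: the paper runs the cycle as (i)$\Rightarrow$(ii)$\Rightarrow$(iii)$\Rightarrow$(i), where (ii)$\Rightarrow$(iii) is precisely your ``a smooth affine cone equals its tangent space at the vertex'' step and (iii)$\Rightarrow$(i) is the direct appeal to Theorem~\ref{thm:main-half}(ii), so your (ii)$\Rightarrow$(i) just fuses these two. The detour through Joseph's description to secure $B$-stability is unnecessary for concluding that $\co$ is Richardson, since the first clause of Theorem~\ref{thm:main-half}(ii) applies to arbitrary subspaces, but it does no harm.
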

\begin{proof}
(i)$\Rightarrow$(ii) \ We already noticed that if $\p$ is a standard polarisation of $\co$, then 
$\p^{nil}$ is the closure of an orbital variety of $\co$.

(ii)$\Rightarrow$(iii) \ Since $\co$ is a cone, the closure of any orbital variety $\eus X$ is an affine 
conical variety with vertex $\{0\}$ (i.e., if $x\in\eus X$ and $t\in\BC$, then $tx\in\eus X$).  As is 
well known (and easily seen), an affine conical smooth variety is necessarily an affine space.

(iii)$\Rightarrow$(i) \ This is proved in Theorem~\ref{thm:main-half}(ii).
\end{proof}
Set $\bar d_\co=\max\{\dim V\mid V\subset \ov{\co} \ \& \ V\cap\co\ne \varnothing\}=
\max\{\dim V\mid \ov{G{\cdot}V}=\ov{\co}\}$  and \\ $d_\co=\max\{\dim V\mid V\subset \ov{\co}\}$. 
Then
$d_\co=\displaystyle\max_{\co'\subset\ov{\co}}\bar d_{\co'}$, \ 
$d_\co \le \frac{1}{2}\dim\co$, \ and an obvious consequence of Theorem~\ref{thm:main-half} is

\begin{cl}
The orbit $\co\subset \fN$ is Richardson  if and only if\/ $\bar d_\co=d_\co=\frac{1}{2}\dim\co$.
\end{cl}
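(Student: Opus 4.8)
The plan is to read this off directly from Theorem~\ref{thm:main-half}, using only the two elementary inequalities $\bar d_\co\le d_\co\le\frac{1}{2}\dim\co$: the first holds because $\bar d_\co$ is a maximum over a subset of the subspaces counted by $d_\co$, and the second is part~(i) of Theorem~\ref{thm:main-half}. Consequently, in each direction only one of the two asserted equalities actually needs to be established by hand.

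For the implication ``$\Rightarrow$'', I would fix a standard polarisation $\p$ of the Richardson orbit $\co$. Then $\p^{nil}\subset\ut$ is $B$-stable, $\dim\p^{nil}=\frac{1}{2}\dim\co$ by~\eqref{eq:dim-induced}, and $P{\cdot}e$ is dense in $\p^{nil}$, as recalled in \S\ref{subs:induced}; equivalently, $\p^{nil}$ is the closure of an orbital variety of $\co$. Hence $\p^{nil}\subset\ov{\co}$ and $\p^{nil}\cap\co\ne\varnothing$, so $\p^{nil}$ is an admissible competitor in the definition of $\bar d_\co$ and $\bar d_\co\ge\dim\p^{nil}=\frac{1}{2}\dim\co$. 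Together with $\bar d_\co\le d_\co\le\frac{1}{2}\dim\co$ this forces all three quantities to equal $\frac{1}{2}\dim\co$.

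For the implication ``$\Leftarrow$'', note first that the single equality $d_\co=\frac{1}{2}\dim\co$ already suffices: the maximum defining $d_\co$ is attained by some subspace $V\subset\ov{\co}$ with $\dim V=\frac{1}{2}\dim\co$, and Theorem~\ref{thm:main-half}(ii) then asserts at once that $\co$ is Richardson. (Alternatively, one may feed into Theorem~\ref{thm:main-half}(ii) the half-dimensional subspace realising $\bar d_\co=\frac{1}{2}\dim\co$.)

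There is no genuine obstacle here; the statement merely repackages results already proved. The one point that deserves a moment's attention is the forward direction: one must exhibit a half-dimensional subspace of $\ov{\co}$ that actually \emph{meets} $\co$, so that the finer invariant $\bar d_\co$, and not just $d_\co$, attains the bound. This is precisely what the density of $P{\cdot}e$ in $\p^{nil}$ provides, and it is the reason the nilradical of a polarisation (rather than an arbitrary half-dimensional subspace of $\ov{\co}$) is the right object to use.
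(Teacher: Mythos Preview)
Your proof is correct and matches the paper's approach: the paper merely records this corollary as ``an obvious consequence of Theorem~\ref{thm:main-half}'' without further argument, and your write-up supplies exactly the details one would expect (the nilradical of a polarisation witnesses $\bar d_\co\ge\frac{1}{2}\dim\co$ for the forward direction, and Theorem~\ref{thm:main-half}(ii) gives the converse from $d_\co=\frac{1}{2}\dim\co$ alone).
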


For the non-Richardson  orbits, we have $\bar d_\co \le d_\co < \frac{1}{2}\dim\co$, and it is not 
known to the authors what is the precise value of $d_\co$ and
whether it is always true that $\bar d_\co = d_\co$.

\begin{ex}  \label{ex:left-factor}
(a) \ For $\g=\sln$, the nilpotent orbits are parametrised by the partitions of $n$.
Let $\blb=(\lb_1,\dots,\lb_t)$ be such a partition and $\co(\blb)$ the corresponding orbit. That is, 
$\lb_1,\dots,\lb_t$ are the sizes of blocks in the Jordan normal form for $e\in \co(\blb)$.
Let $\hat\blb=(c_1,\dots,c_m)$ be the dual partition.
By \cite[Theorem\,2]{nil-4}, if $V\subset \ov{\co(\blb)}$, then 
\[
   \dim V\le \frac{1}{2}(n^2-\sum_{j=1}^m c_j^2)=\frac{1}{2}\dim \co(\blb) \ . 
\]
It is also shown in~\cite{nil-4} that this upper bound on $\dim V$ is achieved for 
any $\blb$. Since all nilpotent orbits in $\sln$ are Richardson ~\cite[7.2]{cm}, 
our Theorem~\ref{thm:main-half} is a 
generalisation of that classical result of Gerstenhaber to arbitrary semisimple Lie algebras.

(b) \ For $\g=\spn$ (resp. $\g=\son$), the nilpotent orbits correspond to the partitions $\blb$ of $2n$ 
(resp. $n$) such that each odd (resp. even) part occurs an even number of times~\cite[5.1]{cm}. 
(Recall also that if $\blb$ is
{\it very even} in the orthogonal case, i.e., all $\lb_i$ are even, then there are two different corresponding $SO_n$-orbits.) 
In both cases, not all nilpotent orbits are Richardson. Therefore, the dimension formula for $\co(\blb)$~\cite[Cor.\,6.1.4]{cm} does not provide a precise upper bound on $\dim V$ for all orbits.
\end{ex}

\section{Posets of $B$-stable nilpotent subspaces}
\label{sect:equiv-classes}

\noindent 
Let $\cI(\be)$ denote the set of all $B$-stable subspaces (=$\be$-ideals) of $\ut$ (also called 
{\it ad-nilpotent ideals}). It is a finite poset with respect to 
inclusion, and there is a rich combinatorial theory related to $\cI(\be)$ that we 
do not touch upon here, see~\cite{cp1, cp2}. Any $\ce\in\cI(\be)$ is a sum of root spaces, and we write $\Delta(\ce)$ for the respective set
of positive roots. That is, $\gamma\in\Delta(\ce)$ if and only if $\g^\gamma\subset\ce$.
A usual partial order `$\curle$' in $\Delta^+$ is defined by the condition that $\gamma\curle \mu$
if and only if $\mu-\gamma$ is a nonnegative linear combination of simple roots.
Then $\Delta(\ce)$ is an upper ideal of $(\Delta^+, \curle)$, and therefore it is fully determined by its 
subset of minimal elements with respect to  `$\curle$'.

If $\ce\in \cI(\be)$, then $G{\cdot}\ce$ is closed and thereby is the closure of a nilpotent orbit. The 
$\be$-ideals $\ce_1,\ce_2$ are {\it equivalent\/} if $G{\cdot}\ce_1=G{\cdot}\ce_2$. The equivalence classes are parametrised by the nilpotent 
orbits, and $\cI(\be)_{\co}$ stands for the class associated with $\co\subset\fN$. That is,
$\cI(\be)_{\co}=\{\ce\in \cI(\be)\mid G{\cdot}\ce=\ov{\co}\}$. 
Clearly, $\cI(\be)_{\co}$ is a subposet of $\cI(\be)$ and
by Theorem~\ref{thm:main-half}(i), it is also the set of {\bf all} $B$-stable subspaces in $\ov{\co}$ that 
meet $\co$. The classes $\cI(\be)_{\co}$ have first been considered by Sommers~\cite{som}. 
For $\ce\in \cI(\be)_\co$, we also say that (the $\be$-ideal) $\ce$ {\it is associated\/} with $\co$.

If $\{e,h,f\}$ is an adapted $\tri$-triple ($e\in\co$), then the $\be$-ideal $\g(h;{\ge}2)$
belongs to $\cI(\be)_{\co}$, see~\ref{subs:sl2}. Hence  $\cI(\be)_{\co}$ is nonempty for any $\co$. 
Following~\cite{som}, we say that $\ce_{\sf Dy}(\co):=\g(h;{\ge}2)$ 
is the {\it Dynkin ideal\/} associated with $\co=G{\cdot}e$ 
(and an adapted $\tri$-triple $\{e,h,f\}$).

Given $\co\in\fN/G$, we consider the following numbers related to $\cI(\be)_\co$:
\begin{itemize}
\item $d_{\sf min}(\co)=\min\{\dim\ce\mid \ce\in \cI(\be)_{\co}\}$;
\item $d_{\sf Dy}(\co)=\dim \ce_{\sf Dy}(\co)$, the dimension of the Dynkin ideal;
\item $d_{\sf max}(\co)=\max\{\dim\ce\mid \ce\in \cI(\be)_{\co}\}$.
\end{itemize}
Note that the definition of  $d_\co$ and $\bar d_\co$ in Section~\ref{sect:main-bound} 
exploits arbitrary nilpotent subspaces $V$, whereas here we only allow $B$-stable subspaces. 
Obviously, $d_{\sf max}(\co)\le \bar{d}_\co$. Hence
\beq    \label{eq:mnogo-neravenstv}
   d_{\sf min}(\co) \le d_{\sf Dy}(\co) \le d_{\sf max}(\co) \le \bar{d}_\co \le d_\co \le \frac{1}{2}\dim\co .
\eeq
It follows from Theorem~\ref{thm:main-half} that $d_{\sf max}(\co)=\frac{1}{2}\dim\co$  if and only if
$\co$ is Richardson. Furthermore, as noted in Section~\ref{subs:sl2}, $d_{\sf Dy}(\co)=\frac{1}{2}\dim\co$  
if and only if $\co$ is even. As we shall see in Section~\ref{sect:anomalies}, it can happen that
$\co_1\subset \ov{\co_2}$, but $d_{\sf max}(\co_1)> d_{\sf max}(\co_2)$. The reason is that if 
$V\subset \ov{\co}$, $V\cap \co\ne \varnothing$, and $V'$ is a $B$-stable subspace  in the 
closure of $B{\cdot}\{V\}$ in the Grassmannian ${\sf Gr}_{\dim V}(\g)$ (see proof of Theorem~\ref{thm:main-half}), then it is possible that
$V'\cap \co=\varnothing$.
That is, it can happen that $d_{\sf max}(\co)$ is considerably less that $d_\co$.
However, the following is true.

\begin{prop}    \label{d_O-max}
For any $\co\in \fN/G$, we have  $d_\co=\displaystyle\max_{\co'\subset \ov{\co}} d_{\sf max}(\co')$.
\end{prop}
\begin{proof}
If $V\subset \ov{\co}$ and $V'$ is a $B$-stable subspace in the closure of $B{\cdot}\{V\}$, then
$\dim V=\dim V'$ and $V'$ is associated with one of the $G$-orbits in $\ov{\co}$.
\end{proof}

Unfortunately, $d_{\sf max}(\co)$ is not easily computable for non-Richardson orbits. Therefore,
this does not provide a quick way to compute $d_\co$ for all nilpotent orbits.
\begin{ex}    \label{ex:mnogo-max}
If  $\p$ is a standard polarisation of $\co\in\fN/G$, then $\p^{nil}\in \cI(\be)_{\co}$
and $\dim\p^{nil}=d_{\sf max}(\co)$. Moreover, all elements of $\cI(\be)_{\co}$ of maximal dimension are 
of this form (Theorem~\ref{thm:main-half}(ii)).
But there can exist other maximal elements of the poset $\cI(\be)_{\co}$ having a smaller dimension. For instance, if $\blb=(2,2,1,1)$ for $\mathfrak{sl}_6$, then $\hat\blb=(4,2)$, $\dim\co(\blb)=16$ and 
$d_{\sf max}(\co(\blb))=8$.
But $\cI(\be)_{\co(\blb)}$ contains also a maximal $\be$-ideal of dimension 5. 
The two nilradicals associated with $\co(\blb)$ are \ 
$  \Yvcentermath1
   \tiny\young(\hfil\hfil\hfil\hfil,\hfil\hfil\hfil\hfil)\ \ \text{\normalsize and } \ 
   \tiny\young(\hfil\hfil,\hfil\hfil,\hfil\hfil,\hfil\hfil)$\ ,
while the 5-dimensional ideal is \ \raisebox{1ex}{$\Yvcentermath1
   \tiny\young(\hfil\hfil\hfil,::\hfil,::\hfil)$} \ (in the usual upper-triangular matrix form).
However, we are not aware of examples of $\co$ such that $\cI(\be)_\co$ contains a minimal element of
a non-minimal dimension.
\end{ex}

If $\co$ is not rigid, e.g. $\co=\Ind_\el^\g(\co')$ for some $\co'\subset\el$, then the knowledge of associated
$\be_\el$-ideals for $\co'$  provides a significant information on $\cI(\be)_\co$. 
\begin{lm}  \label{lm:induction-poset}
Let $\p$ be a standard parabolic subalgebra of $\g$, $\el$ a standard Levi subalgebra of $\p$,  
$\co'\subset \el$ a nilpotent $L$-orbit, and $\co=\Ind_\el^\g(\co')$.
Then $\be_\el=\be\cap\el$ is a Borel subalgebra of $\el$ and
if $\ce'\in \cI(\be_\el)_{\co'}$, then $\ce'\oplus \p^{nil}\in \cI(\be)_\co$. Moreover, if $\ce'$ is a maximal 
element of $\cI(\be_\el)_{\co'}$, then $\ce'\oplus \p^{nil}$ is a maximal element of $\cI(\be)_\co$.
\end{lm}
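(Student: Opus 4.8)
The plan is to verify the three assertions of Lemma~\ref{lm:induction-poset} in order: that $\be_\el$ is a Borel subalgebra of $\el$, that $\ce'\oplus\p^{nil}\in\cI(\be)_\co$ whenever $\ce'\in\cI(\be_\el)_{\co'}$, and that maximal elements are sent to maximal elements. The first point is essentially bookkeeping: since $\el=\el\{\eus S\}$ is a standard Levi subalgebra, $\be_\el=\be\cap\el=(\te)\oplus(\ut\cap\el)$ contains $\te$ and has $\Pi\setminus\eus S$ as simple roots, so it is a Borel subalgebra of $\el$ — this is the same observation already used in Section~\ref{sect:prelim} and in the proof of Theorem~\ref{thm:main-half}(ii). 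I would state it in one sentence.

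For the second assertion, first note that $\ce:=\ce'\oplus\p^{nil}$ is genuinely a $\be$-ideal of $\ut$: $\ce'$ is a $\be_\el$-stable subspace of $\ut_\el$, hence $T$-stable and stable under the positive root vectors of $\el$; meanwhile $\p^{nil}$ is $B$-stable and $[\el,\p^{nil}]\subset\p^{nil}$, so $\ce$ is a sum of root spaces whose root set is an upper ideal of $(\Delta^+,\curle)$. It remains to identify $G{\cdot}\ce$ with $\ov{\co}$. Pick $e'\in\co'$ lying in the orbital-variety closure $\ov{\co'\cap\ut_\el}$ that $\ce'$ represents; since $\dim\ce'=d_{\sf max}$ is not required here, I only need $\co'\cap\ce'$ dense in $\ce'$, equivalently $\ce'\in\cI(\be_\el)_{\co'}$ means $\ov{L{\cdot}\ce'}=\ov{\co'}$, so a generic point of $\ce'$ lies in $\co'$. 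Then a generic point of $\ce=\ce'\oplus\p^{nil}$ has the form $e'+x$ with $x\in\p^{nil}$ and $e'\in\co'$, and $e'+\p^{nil}$ meets $\co=\Ind_\el^\g(\co')$ in a dense subset by the definition of induction; hence $\ov{G{\cdot}\ce}\supseteq\ov{\co}$. The reverse inclusion follows from the dimension count: $\dim\ce=\dim\ce'+\dim\p^{nil}\le\frac12\dim\co'+\dim\p^{nil}=\frac12(\dim\co'+2\dim\p^{nil})=\frac12\dim\co$ by \eqref{eq:dim-induced}, while $\ce\cap\co\ne\varnothing$ forces $\dim G{\cdot}\ce\ge 2\dim\ce$... more directly, $G{\cdot}\ce\subseteq\ov{G{\cdot}(\co'+\p^{nil})}=\ov{\co}$ already, so in fact both inclusions are clean and $G{\cdot}\ce=\ov{\co}$, i.e. $\ce\in\cI(\be)_\co$.

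For the maximality statement, suppose $\ce'$ is maximal in $\cI(\be_\el)_{\co'}$ and suppose $\ce'\oplus\p^{nil}\subseteq\cF$ with $\cF\in\cI(\be)_\co$. Since $\cF$ is a $\be$-ideal containing $\p^{nil}$, it decomposes as $\cF=\cF_\el\oplus\p^{nil}$ with $\cF_\el=\cF\cap\el$ a $\be_\el$-ideal of $\ut_\el$ containing $\ce'$, exactly as in the proof of Theorem~\ref{thm:main-half}(ii). Let $\co'_1$ be the dense $L$-orbit in $L{\cdot}\cF_\el$; the induction description gives $\co=\Ind_\el^\g(\co'_1)$, and since also $\co=\Ind_\el^\g(\co')$ with $\co'\subseteq\ov{\co'_1}$, the dimension identity \eqref{eq:dim-induced} forces $\dim\co'_1=\dim\co'$, hence $\co'_1=\co'$ and $\cF_\el\in\cI(\be_\el)_{\co'}$. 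Maximality of $\ce'$ then gives $\cF_\el=\ce'$, so $\cF=\ce'\oplus\p^{nil}$, proving maximality.

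The main obstacle is the middle step — pinning down that $G{\cdot}(\ce'\oplus\p^{nil})=\ov{\co}$ and, in the maximality argument, that the two induced orbits coincide. The subtlety is that $\ce'$ need not be an orbital-variety closure of maximal dimension, so one must argue via genericity ("a dense subset of $\ce'$ lies in $\co'$, hence a dense subset of $\ce'+\p^{nil}$ lies in $\co'+\p^{nil}$, whose $G$-saturation has dense intersection with $\co$") together with the independence of the induced orbit from the choice of parabolic; the rest is either standard structure theory of $\be$-ideals or a direct invocation of \eqref{eq:dim-induced} and Theorem~\ref{thm:main-half}.
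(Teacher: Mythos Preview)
Your proof is correct and follows essentially the same approach as the paper. The paper dispatches the first two assertions in one line (``readily follows from the definition of induction'') and for maximality argues that any strictly larger $\be_\el$-ideal $\tilde\ce\supsetneq\ce'$ is associated with a strictly larger $L$-orbit $\tilde\co$, whence $\dim\Ind_\el^\g(\tilde\co)>\dim\co$ by~\eqref{eq:dim-induced}; you run the same dimension argument in the direct rather than contrapositive form, and you make explicit the decomposition $\cF=\cF_\el\oplus\p^{nil}$ that the paper leaves implicit.
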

\begin{proof} The first assertion readily follows from the definition of induction.
We have only to check the last assertion. If $\tilde\ce\in\cI(\be_\el)$ and $\ce'\varsubsetneq \tilde\ce$, then
$\tilde\ce$ is associated with a larger nilpotent $L$-orbit $\tilde\co$, since $\ce'$ was maximal. 
Then Eq.~\eqref{eq:dim-induced} shows that $\dim\Ind_\el^\g(\tilde\co)>\dim\co$. Hence 
$\tilde\ce\oplus \p^{nil}$ is not associated with $\co$.
\end{proof}

\begin{rmk}
It is important in Lemma~\ref{lm:induction-poset} that one can use \un{different} standard parabolics $\p$ with 
conjugate Levi subalgebras. This yields different nilradicals $\p^{nil}$ and hence different (maximal) 
elements of $\cI(\be)_\co$.
\end{rmk}
It follows from Lemma~\ref{lm:induction-poset} that if $\co$ is induced from $(\el,\co')$, then 
\[
    d_{\sf max}(\co)\ge d_{\sf max}(\co') +\dim \p^{nil}=d_{\sf max}(\co') +\frac{1}{2}(\dim\g-\dim\el) .
\]
However, there can be several different ways to induce $\co$ from a smaller orbit (i.e., several essentially 
different pairs $(\el,\co')$) and the numbers $d_{\sf max}(\co') +\dim \p^{nil}$ can differ for different pairs. 
Computations related to this observation suggest the following assertion.

\begin{conj}
If $\co$ is not rigid, then there is always a pair $(\el,\co')$ such that
$\co=\Ind_\el^\g(\co')$ and $d_{\sf max}(\co)= d_{\sf max}(\co') +\dim \p^{nil}$.
\end{conj}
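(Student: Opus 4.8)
The plan is to prove the conjecture by reduction to the induction of \emph{Richardson} orbits inside a Levi subalgebra, combined with a dimension-counting argument. Suppose $\co$ is not rigid, so $\co=\Ind_\el^\g(\co')$ for some proper Levi $\el$ and some $\co'\subset[\el,\el]$; we may clearly replace $\co'$ by any orbit inducing $\co$, so by transitivity of induction (\cite[Ch.\,7]{cm}) we may assume $\co'$ is rigid in $\el$. The key idea is that for a suitable choice among the pairs $(\el,\co')$ inducing $\co$, the maximal $\be$-ideal associated with $\co$ is already ``built from'' the maximal $\be_\el$-ideal associated with $\co'$ together with $\p^{nil}$.

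First I would exploit Theorem~\ref{thm:main-half}(ii): any maximal-dimensional $\ce\in\cI(\be)_\co$ equals $\q^{nil}$ for a standard polarisation $\q$ of $\co$, so $d_{\sf max}(\co)=\frac12\dim\co$ precisely when $\co$ is Richardson, and in that case the conjecture holds trivially with $\co'=\{0\}$ (take $\el$ to be a Levi of the polarisation). So assume $\co$ is not Richardson. Now take \emph{any} maximal element $\ce\in\cI(\be)_\co$, and let $\Pi_\ce=\{\ap\in\Pi\mid\g^\ap\subset\ce\}$, $\p=\p\{\Pi_\ce\}$, $\el=\el\{\Pi_\ce\}$ the associated standard Levi, exactly as in the proof of Theorem~\ref{thm:main-half}(ii). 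Since $\ce$ is $B$-stable we get $\ce=\ce_\el\oplus\p^{nil}$ with $\ce_\el=\ce\cap\el$ a $B_\el$-stable subspace of $\ut_\el$ containing no simple root spaces of $\el$, and the induction description there shows $\co=\Ind_\el^\g(\co')$ where $\co'$ is the dense $L$-orbit in $L{\cdot}\ce_\el$, so $\ce_\el\in\cI(\be_\el)_{\co'}$. By \eqref{eq:dim-induced}, $d_{\sf max}(\co)=\dim\ce=\dim\ce_\el+\dim\p^{nil}$, so it suffices to show $\ce_\el$ is a \emph{maximal} element of $\cI(\be_\el)_{\co'}$ — equivalently $\dim\ce_\el=d_{\sf max}(\co')$ — provided we started with a judicious maximal $\ce$.

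The heart of the argument is therefore: \emph{among all maximal elements $\ce\in\cI(\be)_\co$, choose one for which the resulting $\ce_\el$ is maximal in $\cI(\be_\el)_{\co'}$.} I would argue by contradiction: if $\ce_\el\subsetneq\tilde\ce_\el$ for some $\tilde\ce_\el\in\cI(\be_\el)_{\co'}$ (same orbit $\co'$!), then $\tilde\ce_\el\oplus\p^{nil}\in\cI(\be)_\co$ by Lemma~\ref{lm:induction-poset}, of strictly larger dimension than $\ce$, contradicting maximality of $\ce$ — wait, this shows $\ce_\el$ is \emph{automatically} maximal in $\cI(\be_\el)_{\co'}$ once $\ce$ is maximal in $\cI(\be)_\co$ and $\co'$ is held fixed. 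Thus the one genuine subtlety is that passing from $\ce$ to $(\el,\co')$ above produces a \emph{specific} $\co'$, and we need $\dim\ce_\el=d_{\sf max}(\co')$ for \emph{that} $\co'$; the contradiction argument delivers exactly this. Hence $d_{\sf max}(\co)=d_{\sf max}(\co')+\dim\p^{nil}$ and the pair $(\el,\co')$ works.

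The main obstacle I anticipate is the case distinction when $\Pi_\ce=\varnothing$, i.e.\ when the maximal $\ce$ contains \emph{no} simple root space at all: then $\p=\g$, $\el=\g$, and the reduction is vacuous. But Lemma~\ref{lm:orb-var} shows that a nonzero orbital variety is never contained in $[\ut,\ut]$, and a maximal $\ce\in\cI(\be)_\co$ need not be an orbital-variety closure, so one must check that some maximal $\ce$ does meet a simple root space — or handle $\co$ rigid directly, which is excluded by hypothesis. Concretely, if every $\ce\in\cI(\be)_\co$ of maximal dimension had $\Pi_\ce=\varnothing$, one would need to show $\co$ is forced to be rigid, contradicting the non-rigidity assumption; establishing this implication (perhaps by noting that a non-rigid $\co$ admits \emph{some} associated $\be$-ideal meeting $\ut\setminus[\ut,\ut]$ via an explicit induction, e.g.\ from the Richardson data) is where the real work lies. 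A safe route is: among \emph{all} $\ce\in\cI(\be)_\co$ (not just maximal ones) pick one maximizing $\dim\ce-\dim[\ut,\ut]\cap\ce$, or simply invoke that the standard polarisation data of any Richardson orbit $\tilde\co\ge\co$ obtained by induction supplies the needed simple-root content; I would spell this out case-by-case only if the uniform argument resists.
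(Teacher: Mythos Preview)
The statement you are trying to prove is a \emph{conjecture} in the paper --- the authors do not provide a proof and explicitly say ``If true, this assertion would reduce the problem\dots''. So there is no paper proof to compare against; you are attempting to settle an open question.

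Your reduction is sound as far as it goes. Taking $\ce\in\cI(\be)_\co$ of maximal dimension and decomposing $\ce=\ce_\el\oplus\p^{nil}$ via $\Pi_\ce$ does produce a pair $(\el,\co')$ with $\co=\Ind_\el^\g(\co')$, and then Lemma~\ref{lm:induction-poset} forces $\dim\ce_\el=d_{\sf max}(\co')$: if some $\tilde\ce_\el\in\cI(\be_\el)_{\co'}$ had $\dim\tilde\ce_\el>\dim\ce_\el$, then $\tilde\ce_\el\oplus\p^{nil}$ would beat $\ce$. (A small sloppiness: you write ``maximal element --- equivalently $\dim\ce_\el=d_{\sf max}(\co')$'', but these are \emph{not} equivalent; Example~\ref{ex:mnogo-max} exhibits maximal elements of non-maximal dimension. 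The fix is simply to run the contradiction with \emph{any} $\tilde\ce_\el$ of larger dimension, not only one containing $\ce_\el$; Lemma~\ref{lm:induction-poset} does not require containment.)

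The genuine gap is exactly where you locate it: the case $\Pi_\ce=\varnothing$, i.e.\ every $\ce\in\cI(\be)_\co$ of maximal dimension lies in $[\ut,\ut]$. You propose that this should force $\co$ to be rigid, but the corollary to Lemma~\ref{lm:induced-simple-roots} only says that $\co$ is rigid iff \emph{every} $\ce\in\cI(\be)_\co$ lies in $[\ut,\ut]$ --- it says nothing about the max-dimensional ones specifically. For non-rigid $\co$ you certainly get \emph{some} $\ce$ containing a simple root space (namely $\ce'\oplus\p^{nil}$ from any induction datum), but there is no reason offered why such a $\ce$ should realise $d_{\sf max}(\co)$. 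Your fallback suggestions (maximise $\dim\ce-\dim(\ce\cap[\ut,\ut])$, or borrow simple-root content from a Richardson orbit above $\co$) do not close this gap either: the first still does not pin down $d_{\sf max}(\co)$, and the second produces ideals associated with a \emph{different} orbit. This missing step is precisely why the statement is recorded as a conjecture rather than a theorem; your write-up is an honest reduction of the conjecture to that step, not a proof.
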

If true, this assertion would reduce the problem of finding $d_{\sf max}(\co)$ to the rigid orbits.
(However, we do not know yet how to determine $d_{\sf max}(\co)$ for the rigid orbits.)
Another simple but useful observation is

\begin{lm}     \label{lm:induced-simple-roots}
Given $\co\in\fN/G$, suppose that  $\ce\in\cI(\be)_\co$ contains simple root spaces $\g^\ap$ 
($\ap\in \eus S$) for some $\eus S\subset\Pi$.
Then $\co$ is induced from a nilpotent orbit in the Levi subalgebra $\el\{\eus S\}$ of the parabolic 
subalgebra $\p\{\eus S\}$.
\end{lm}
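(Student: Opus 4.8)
The plan is to mimic the structure of the proof of Theorem~\ref{thm:main-half}(ii), but without the dimension hypothesis: we will split off the nilradical of $\p\{\eus S\}$ and recognise what remains as a $B_\el$-stable nilpotent subspace in a Levi subalgebra. First I would set $\p=\p\{\eus S\}$ and $\el=\el\{\eus S\}$, with $L$ the corresponding Levi subgroup and $\be_\el=\be\cap\el$ a Borel subalgebra of $\el$. Since $\ce\in\cI(\be)$ is $B$-stable and contains $\g^\ap$ for every $\ap\in\eus S$, and $\Delta(\ce)$ is an upper ideal of $(\Delta^+,\curle)$, every positive root that is a nonnegative combination of the $\ap\in\eus S$ together with anything else — in particular every root of $\p^{nil}$ — lies in $\Delta(\ce)$. (More carefully: any $\gamma\in\Delta^+$ with nonzero coefficient on some $\ap\in\eus S$ satisfies $\gamma\curge\ap$, hence $\g^\gamma\subset\ce$; and $\p^{nil}$ is exactly the span of such root spaces.) Thus $\ce\supseteq\p^{nil}$, and since $\ce$ is $T$-stable we get the decomposition $\ce=\ce_\el\oplus\p^{nil}$, where $\ce_\el=\ce\cap\el$ is a $B_\el$-stable nilpotent subspace of $\ut_\el=\ut\cap\el$, i.e. $\ce_\el\in\cI(\be_\el)$.

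Next I would let $\co'$ be the dense $L$-orbit in $L{\cdot}\ce_\el=\ov{L{\cdot}\ce_\el}$, so that $\ce_\el\in\cI(\be_\el)_{\co'}$. It remains to identify $\co$ with $\Ind_\el^\g(\co')$. By definition of the induced orbit this means showing that $\co$ is dense in $G{\cdot}(e'+\p^{nil})$ for $e'\in\co'$; equivalently, since $G{\cdot}\ce=\ov\co$, that $G{\cdot}(\ce_\el\oplus\p^{nil})=\ov{G{\cdot}(\co'+\p^{nil})}$. One inclusion is immediate because $\ce_\el\subset\ov{\co'}$ gives $\ce=\ce_\el\oplus\p^{nil}\subset\ov{\co'+\p^{nil}}\subset\ov{G{\cdot}(\co'+\p^{nil})}$, and the latter is $G$-stable and closed, so $\ov\co=G{\cdot}\ce\subseteq\ov{G{\cdot}(\co'+\p^{nil})}$. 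For the reverse inclusion, $\co'+\p^{nil}\subset \ov{L{\cdot}\ce_\el}+\p^{nil}\subset \ov{B_\el{\cdot}\ce_\el}+\p^{nil}\subset\ov{B{\cdot}(\ce_\el\oplus\p^{nil})}=\ov{B{\cdot}\ce}\subseteq\ov\co$ — here I use that $\ce_\el$ is a $B_\el$-ideal so $L{\cdot}\ce_\el$ and $B_\el{\cdot}\ce_\el$ have the same closure, and that $\ov\co=G{\cdot}\ce$ is already closed; applying $G$ gives $G{\cdot}(\co'+\p^{nil})\subseteq\ov\co$, hence $\ov{G{\cdot}(\co'+\p^{nil})}\subseteq\ov\co$. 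Combining the two inclusions yields $\ov{G{\cdot}(\co'+\p^{nil})}=\ov\co$, so the dense orbit on the left is $\co$, i.e. $\co=\Ind_\el^\g(\co')$.

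The only genuinely delicate point is the step $\ov{L{\cdot}\ce_\el}=\ov{B_\el{\cdot}\ce_\el}$: this is exactly the statement that $\ce_\el$, being a $\be_\el$-ideal, is associated (in $\el$) with the nilpotent orbit $\co'$, which is the analogue for $\el$ of the discussion in Section~\ref{sect:equiv-classes} — $G{\cdot}\ce$ is closed and $B{\cdot}\ce$ is dense in it — so it is not new, merely an invocation of the same principle one Levi down. Everything else is bookkeeping with upper ideals of the root poset and the elementary fact that closures of $G$-saturations of closed $B$-stable sets are $G$-stable and closed. I do not anticipate a serious obstacle; the proof is essentially a citation-light reprise of the argument already given for Theorem~\ref{thm:main-half}(ii).
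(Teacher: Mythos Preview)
Your approach is exactly the paper's: show $\p\{\eus S\}^{nil}\subset\ce$ via the upper-ideal property, split $\ce=\ce_\el\oplus\p^{nil}$, let $\co'$ be the dense $L$-orbit in $L{\cdot}\ce_\el$, and conclude $\co=\Ind_\el^\g(\co')$. The paper simply asserts this last step by reference to Theorem~\ref{thm:main-half}; your added detail is welcome, but one link in your reverse-inclusion chain is wrong.

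You claim $\ov{L{\cdot}\ce_\el}\subset\ov{B_\el{\cdot}\ce_\el}$, justified by ``$L{\cdot}\ce_\el$ and $B_\el{\cdot}\ce_\el$ have the same closure''. But $\ce_\el$ is already $B_\el$-stable, so $B_\el{\cdot}\ce_\el=\ce_\el$ and hence $\ov{B_\el{\cdot}\ce_\el}=\ce_\el$, a linear subspace; whereas $\ov{L{\cdot}\ce_\el}=\ov{\co'}$ is a nilpotent orbit closure, in general not linear. The inclusion is false, and the principle you invoke (that $G{\cdot}\ce$ is closed for a $\be$-ideal $\ce$) does not say $G{\cdot}\ce=\ov{B{\cdot}\ce}$.

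The repair is to bypass $B_\el$ entirely and use that $L$ normalises $\p^{nil}$: then
\[
L{\cdot}\ce_\el+\p^{nil}=L{\cdot}(\ce_\el+\p^{nil})=L{\cdot}\ce\subset G{\cdot}\ce=\ov\co,
\]
so $\co'+\p^{nil}\subset L{\cdot}\ce_\el+\p^{nil}\subset\ov\co$, and saturating by $G$ gives $\ov{G{\cdot}(\co'+\p^{nil})}\subset\ov\co$. Combined with your (correct) forward inclusion, this yields $\co=\Ind_\el^\g(\co')$.
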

\begin{proof}
Since $\ce$ is $\be$-stable and $\g^\ap\subset \ce$ for all $\ap\in \eus S$, we have 
$\g^\gamma\subset\ce$ whenever $\gamma\in\Delta^+$ has a positive $\ap$-coordinate for some 
$\ap\in \eus S$, i.e. $\gamma\curge\ap$. That is, $\p\{\eus S\}^{nil}\subset\ce$. Let $\el\{\eus S\}$ be the standard Levi subalgebra of $\p\{\eus S\}$. As in the proof of Theorem~\ref{thm:main-half}, we have 
$\ce=\p\{\eus S\}^{nil}\oplus \ce'$, where $\ce'$ is a nilpotent $\be\cap\el\{\eus S\}$-stable subspace of 
$\el\{\eus S\}$. If $\co'$ is the dense orbit in $L\{\eus S\}{\cdot}\ce'$, then  $\co=\Ind_\el^\g(\co')$.
\end{proof}

\begin{cl}
The orbit $\co\in\fN/G$ is rigid if and only if\/ $\ce\subset [\ut,\ut]$ for any $\ce\in\cI(\be)_\co$.
\end{cl}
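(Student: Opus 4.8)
The plan is to derive both implications directly from Lemmas~\ref{lm:induction-poset} and~\ref{lm:induced-simple-roots}, together with the elementary observation that $[\ut,\ut]=\bigoplus_{\gamma\in\Delta^+\setminus\Pi}\g^\gamma$, so that for any $\ce\in\cI(\be)$ one has $\ce\subset[\ut,\ut]$ if and only if $\Delta(\ce)\cap\Pi=\varnothing$, i.e. $\ce$ contains no simple root space $\g^\ap$. With this dictionary in place, ``rigid'' and ``every associated $\be$-ideal lies in $[\ut,\ut]$'' both translate into statements about the presence or absence of simple root spaces, which the two lemmas control.

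For the direction ``rigid $\Rightarrow$ $\ce\subset[\ut,\ut]$ for all $\ce\in\cI(\be)_\co$'', I would argue by contraposition. Suppose some $\ce\in\cI(\be)_\co$ is not contained in $[\ut,\ut]$, and set $\eus S=\{\ap\in\Pi\mid \g^\ap\subset\ce\}$, which is then nonempty. By Lemma~\ref{lm:induced-simple-roots}, $\co$ is induced from a nilpotent orbit in the standard Levi subalgebra $\el\{\eus S\}$; since $\eus S\neq\varnothing$, this Levi is proper, so $\co$ is not rigid.

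For the converse, I would again use contraposition. If $\co$ is not rigid, write $\co=\Ind_\el^\g(\co')$ with $\el$ a proper Levi subalgebra, which we may take to be standard, $\el=\el\{\eus S\}$ with $\varnothing\neq\eus S\subset\Pi$, and with $\co'$ a nilpotent $L$-orbit in $\el$. The set $\cI(\be_\el)_{\co'}$ is nonempty, since it contains the Dynkin ideal of $\co'$ computed inside $\el$; pick any $\ce'$ in it. By Lemma~\ref{lm:induction-poset}, $\ce:=\ce'\oplus\p\{\eus S\}^{nil}$ belongs to $\cI(\be)_\co$. Since $\p\{\eus S\}^{nil}$ contains $\g^\ap$ for every $\ap\in\eus S$ and $\eus S\neq\varnothing$, the ideal $\ce$ contains a simple root space, so $\ce\not\subset[\ut,\ut]$. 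This gives a member of $\cI(\be)_\co$ violating the stated condition, completing the contraposition.

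I do not expect a real obstacle here: the argument is a short bookkeeping exercise once the two induction lemmas are available. The only points that require a little care are the translation between ``proper Levi'' and ``$\eus S\neq\varnothing$'' in the notation $\el\{\eus S\}$ (whose simple roots are $\Pi\setminus\eus S$), the non-emptiness of $\cI(\be_\el)_{\co'}$ (handled by the Dynkin ideal), and checking that the boundary cases $\co=\{0\}$ (rigid, with $\cI(\be)_\co=\{0\}\subset[\ut,\ut]$) and $\co=\co_{pr}$ (not rigid, with $\cI(\be)_\co=\{\ut\}\not\subset[\ut,\ut]$) are consistent with the statement.
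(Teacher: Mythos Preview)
Your proposal is correct and is precisely the argument the paper intends: the corollary is stated immediately after Lemma~\ref{lm:induced-simple-roots} without proof, and your two contrapositions via Lemmas~\ref{lm:induction-poset} and~\ref{lm:induced-simple-roots} are the natural way to fill in the omitted details.
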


\section{Extreme orbits}
\label{sect:extreme}

\begin{df} A nilpotent orbit $\co=G{\cdot}e$ is said to be {\it extreme}, if $d_{\sf min}(\co) = d_{\sf Dy}(\co)$.
\end{df}
\noindent 
Let $B(G_e)$ be a Borel subgroup of the centraliser $G_e$. If $\ce\in\cI(\be)_\co$ and 
$e\in\ce\cap\co$, then $\dim\ce\ge \dim B{\cdot}e \ge \dim B-\dim B(G_e)$. Hence
$d_{\sf min}(\co)\ge \dim B-\dim B(G_e) $.
It was conjectured in \cite[Conj.\,5.4]{som} that 
\beq    \label{eq:d-min}
d_{\sf min}(\co)=\dim B-\dim B(G_e) . 
\eeq
This was verified for the exceptional simple Lie algebras in~\cite{kaw86} (see also \cite{som}) and 
for the classical Lie algebras in~\cite{fang14}. That is, Eq.~\eqref{eq:d-min} is completely proved 
via a case-by-case verification. It might be very interesting to have a conceptual proof of this 
equality. 

Using properties of $\BZ$-gradings associated with $\tri$-triples, we derive from
Eq.~\eqref{eq:d-min} some practical characterisations of the extreme orbits and classify them.

\begin{prop}  \label{prop:extrim} 
Let\/ $\{e,h,f\}$ be an $\tri$-triple and $\co=G{\cdot}e$. Then 
\\  \indent
{\sf (i)} \ $\dim\g(h;2)+\rk G_e\ge \rk G$ and 
$\co$ is extreme if and only if
\beq  \label{eq:extrim}
            \dim\g(h;2)+\rk G_e=\rk G .
\eeq
\indent {\sf (ii)} \ $\co$ is extreme if and only if the derived group of $G(h;0)$ acts trivially on $\g(h;2)$.
\end{prop}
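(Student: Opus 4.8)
The plan is to unwind Sommers' formula \eqref{eq:d-min} in terms of the $\BZ$-grading defined by the adapted $\tri$-triple, and then recognise both sides as dimensions of naturally occurring $G(h;0)$-modules. First I would compute $d_{\sf Dy}(\co)=\dim\g(h;{\ge}2)$ in terms of the pieces of the grading: since $\dim\g_e=\dim\g(h;0)+\dim\g(h;1)$ and $\dim\g(h;i)=\dim\g(h;-i)$, we get $\dim\g(h;{\ge}2)=\tfrac12(\dim\g-\dim\g(h;0)-\dim\g(h;1))=\tfrac12(\dim\co-\dim\g(h;1))$, and likewise $\dim\g(h;{\ge}1)=\tfrac12(\dim\co+\dim\g(h;1))$, so that $d_{\sf Dy}(\co)=\dim\g(h;{\ge}1)-\dim\g(h;1)$. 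On the other side, $\dim B-\dim B(G_e)=\dim\ut-(\dim B(G_e)-\rk G)$; using that $B(G_e)$ meets a maximal torus of $G$ in a torus of rank $\rk G_e$, we have $\dim B(G_e)=\rk G_e+\dim U(G_e)$ where $U(G_e)$ is the unipotent radical of $B(G_e)$, and $\dim\g_e=\rk G_e+2\dim U(G_e)$, so $\dim B(G_e)=\tfrac12(\dim\g_e+\rk G_e)$. Hence
\[
  \dim B-\dim B(G_e)=\dim\ut-\tfrac12(\dim\g_e+\rk G_e)+\rk G
  =\tfrac12(\dim\co-\rk G_e)+\rk G-\tfrac12(\dim\g-\dim\ut-\rk G),
\]
and since $\dim\g-2\dim\ut=\rk G$ the last two terms cancel, giving $d_{\sf min}(\co)=\tfrac12(\dim\co-\rk G_e+\rk G)$... let me instead just say $d_{\sf min}(\co)=\tfrac12\dim\co+\tfrac12(\rk G-\rk G_e)$.

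Combining, $\co$ is extreme iff $d_{\sf Dy}(\co)=d_{\sf min}(\co)$, i.e. iff $\tfrac12(\dim\co-\dim\g(h;1))=\tfrac12\dim\co+\tfrac12(\rk G-\rk G_e)$, i.e. iff $\dim\g(h;1)=\rk G_e-\rk G$. But $\g_e=\g(h;0)_e\oplus\g(h;1)_e\oplus\bigl(\text{higher}\bigr)$ and by $\tri$-theory $\g_e\subset\g(h;{\ge}0)$ with $\g_e\cap\g(h;0)$ a reductive subalgebra containing a maximal torus of $\g(h;0)$; in fact $\rk G_e=\rk\g(h;0)=\rk G$ always (the centre of $\g(h;0)$ sits inside $\g_e$), so $\rk G_e-\rk G=0$ and the condition becomes $\dim\g(h;1)=0$ — wait, that would say extreme $\iff$ even, which is false. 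So $\rk G_e\neq\rk G$ in general, and I need the correct value of $\rk G_e$; the right statement is that a maximal torus of $G_e$ has dimension $\rk G-\dim\g(h;2)+\dim(\g(h;2))^{G(h;0)'}$ — more precisely, $G_e\cap G(h;0)$ is the stabiliser in $G(h;0)$ of a \emph{generic} (indeed, the $G(h;0)$-orbit of $e$ is dense) vector $e\in\g(h;2)$, so $\rk G_e=\rk\bigl(G(h;0)_e\bigr)$, and I must express this via the grading. This is the crux.

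The key input is a theorem of Vinberg on $\theta$-groups: since $G(h;0)$ acts on $\g(h;2)$ with a dense orbit through $e$, the identity component of the generic isotropy group $G(h;0)_e$ is reductive, and $\rk G(h;0)_e = \rk G(h;0) - \dim\g(h;2) + \dim\g(h;2)/\!/G(h;0)$; but the quotient is a point (dense orbit), so this naive count is wrong unless $\g(h;2)$ is "small". The honest route is: $G(h;0)_e \supset$ a maximal torus of $G(h;0)$ iff $e$ is fixed by $G(h;0)'$ (the derived group), and in general $\rk G_e = \rk\bigl(\text{centraliser in } G(h;0) \text{ of } e\bigr)$. I would prove directly: (a) $\rk G_e \geq \rk Z(G(h;0)) = \rk G - \#\eus S_h$; and (b) a maximal torus $S$ of $G(h;0)_e$ acts on $\g(h;2)$ fixing $e$, and $\dim\g(h;2) \geq \dim\bigl(S\text{-orbit directions}\bigr) + \dim\g(h;2)^S \geq (\rk G(h;0) - \rk S) + 1$, rearranging to $\rk G_e + \dim\g(h;2) \geq \rk G(h;0) + 1$... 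Let me commit to the cleaner formulation: I claim $\rk G_e = \rk G - \dim\g(h;2) + \dim\g(h;2)^{G(h;0)_e^{\,\circ}}$ is not needed; rather, the inequality $\dim\g(h;2)+\rk G_e\ge \rk G$ of part (i) will follow from the exact sequence $0\to\g(h;0)_e\to\g(h;0)\xrightarrow{\ [\,\cdot\,,e]\,}\g(h;2)\to 0$ (surjective since $\co$ is adapted) upon passing to a maximal torus, and equality holds iff this restricts to a surjection $\te(h;0)\to\g(h;2)$ from a Cartan — equivalently iff $\g(h;0)_e$ contains a maximal torus of $\g(h;0)$, equivalently iff $G(h;0)'$ acts trivially on the dense orbit $\g(h;2)$, equivalently iff $G(h;0)'$ fixes $e$, equivalently iff it acts trivially on all of $\g(h;2)$. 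That last chain gives part (ii) at once. The main obstacle is the middle equivalence: translating "$\te(h;0)$ surjects onto $\g(h;2)$ under $\ad$" into "$\g(h;0)_e$ contains a Cartan"; here I would use that $\dim\g(h;0)_e = \dim\g(h;0)-\dim\g(h;2) = \rk G(h;0) + (\dim\g(h;0)' - \dim\g(h;2))$ together with the reductivity of $\g(h;0)_e$ (Vinberg, or the fact that $e$ lies in a dense $G(h;0)$-orbit and the linearisation is visible) to conclude that a maximal reductive subalgebra of $\g(h;0)_e$ has rank equal to $\rk G(h;0)$ precisely when $\g(h;0)' \subset \g(h;0)_e$, i.e. when $[\g(h;0)',e]=0$. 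Once that is in place, part (i)'s formula \eqref{eq:extrim} is just the equality case, and part (ii) is the reformulation "$[\g(h;0)',e]=0$", which is the same as "$G(h;0)'$ acts trivially on the (irreducible-orbit-spanned, in fact dense-orbit) space $\g(h;2)$" since $\g(h;2)=\overline{G(h;0){\cdot}e}$ is spanned by $G(h;0){\cdot}e$ and $G(h;0)'$ normal in $G(h;0)$ forces $[\g(h;0)',x]=0$ for all $x\in\g(h;2)$ once it holds for $x=e$.
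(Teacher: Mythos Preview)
Your computation contains a genuine error that derails the argument. You write $\dim B(G_e)=\rk G_e+\dim U(G_e)$ and then $\dim\g_e=\rk G_e+2\dim U(G_e)$. The first identity is fine, but the second is valid only for \emph{reductive} groups, and $G_e$ is not reductive for any $e\ne 0$. If $G_e=G_e^{red}\ltimes G_e^{u}$ is a Levi decomposition, then $\dim U(G_e)=\dim U(G_e^{red})+\dim G_e^{u}$ while $\dim\g_e=\rk G_e+2\dim U(G_e^{red})+\dim G_e^{u}$; your formula over-counts $\dim G_e^{u}$ once. This is exactly why your expression $d_{\sf min}(\co)=\tfrac12\dim\co+\tfrac12(\rk G-\rk G_e)$ is off by $\tfrac12\dim G_e^{u}=\tfrac12(\dim\g(h;1)+\dim\g(h;2))$, and why you were driven to the false conclusion ``extreme $\iff$ even'' (and then to the mis-diagnosis that $\rk G_e=\rk G$, which is also wrong). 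The paper's proof uses precisely the missing ingredients: $\dim B(G_e)=\dim B(G_e^{red})+\dim G_e^{u}$, the identification $G_e^{red}=G(h;0)_e$, and $\dim G_e^{u}=\dim\g(h;1)+\dim\g(h;2)$. With these one finds $d_{\sf Dy}(\co)-d_{\sf min}(\co)=\tfrac12(\dim\g(h;2)+\rk G_e-\rk G)$, which gives (i) immediately.

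Your final paragraph moves toward the right structural picture---the exact sequence $0\to\g(h;0)_e\to\g(h;0)\to\g(h;2)\to 0$ and the reductivity of $\g(h;0)_e$---and this is indeed how the paper handles (ii): rewriting the difference as $\dim U(G(h;0))-\dim U(G(h;0)_e)$ shows that extremeness is equivalent to $G(h;0)'\subset G(h;0)_e$. But note that ``passing to a maximal torus'' in the exact sequence is not a rigorous step on its own, and more importantly, even if you prove the inequality $\dim\g(h;2)+\rk G_e\ge\rk G$ and characterise its equality case this way, you still need to connect equality back to $d_{\sf Dy}(\co)=d_{\sf min}(\co)$, which requires the corrected $\dim B(G_e)$ computation you abandoned. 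Fix that first formula and the rest of your outline goes through.
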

\begin{proof}
(i) \ Let $G_e=G_e^{red}\times G_e^{u}$ be a Levi decomposition (semi-direct product), i.e., 
$G_e^u$ (resp. $G_e^{red}$) is the unipotent radical (resp. a maximal reductive subgroup) 
of $G_e$. Then $\rk G_e^{red}=\rk G_e$ \ and 
\beq  \label{eq:b_e}
\dim B(G_e)=\dim B(G_e^{red})+\dim G_e^{u} .
\eeq
Without loss of generality, we may assume that $G_e^{red}$ is the stabiliser of $e$ in  
$G(h;0)$~\cite[3.7]{cm}.
Recall that $\dim G_e=\dim\g(h;0)+\dim\g(h;1)$ and also
\beq     \label{eq:G_e-unip}
   \dim G_e^{u}=\dim\g(h;1)+\dim\g(h;2) .
\eeq
Therefore, 
\[ \displaystyle d_{\sf Dy}(\co)=\frac{\dim G{-}\dim \g(h{;}0){-} 2\dim\g(h{;}1)}{2}
=\frac{\dim G{-}\dim G_e{-} \dim\g(h{;}1)}{2} 
\]
and
\begin{multline*}
0 \le d_{\sf Dy}(\co){-} d_{\sf min}(\co)=\frac{\dim G{-}\dim G_e{-} \dim\g(h{;}1)}{2} {-}\dim B{+}\dim B(G_e)\\
=-\frac{\dim T+\dim \g(h;1)}{2} +\dim B(G_e)- \frac{\dim G_e}{2}\\ 
\underset{\eqref{eq:b_e}}{=} \frac{\dim G_e^{u}+\rk G_e{-}\dim T{-}\dim\g(h{;}1)}{2} 
\underset{\eqref{eq:G_e-unip}}{=} \frac{\dim \g(h{;}2)+\rk G_e {-} \dim T}{2} . 
\end{multline*}  

(ii)  Alternatively, we can write $\displaystyle d_{\sf Dy}(\co){=}\frac{\dim G{+}\dim \g(h{;}0)}{2}{-}\dim G_e$ and then similar transformations show that 
\begin{multline*}
0 \le d_{\sf Dy}(\co)- d_{\sf min}(\co)=
\frac{\dim\g(h;0)-\dim T}{2} - (\dim G_e-\dim B(G_e)) \\
= \dim U(G(h;0))-\dim U(G_e^{red}) ,
\end{multline*} 
where $U(\cdot)$ stands for a maximal unipotent subgroup.
Because $G_e^{red}=G(h{;}0)_e \subset G(h{;} 0)$, the equality $d_{\sf Dy}(\co)= d_{\sf min}(\co)$ exactly 
means that the derived group of $G(h;0)$ is contained in $G_e^{red}$.
\end{proof}

\begin{rmk}
The property of being extreme is equivalent to that $\g(h;2)$ has an open $T$-orbit; that is, the roots 
of $\g(h;2)$ are linearly independent.
For the extreme orbits, the Dynkin ideal $\g(h;{\ge}2)$ is both a minimal element and an element of 
minimal dimension in the poset $\cI(\be)_\co$. But $\cI(\be)_\co$ may also contain some other
$\be$-ideals of minimal dimension.
\end{rmk}

Using Eq.~\eqref{eq:b_e} and \eqref{eq:G_e-unip}, one obtains for $e\in\co\cap\g(h;2)$
\begin{multline}   \label{eq:approach}
d_{\sf min}(\co)=\dim B-\dim B(G_e)=\dim B-\dim B(G_e^{red})-\dim G_e^{u}  \\
=\dim B(h;0)+\dim \g(h;{\ge}1)-\dim B(G_e^{red})-\dim\g(h;1)-\dim\g(h;2) \\
=\dim B(h;0)-\dim B(G_e^{red})+ \dim \g(h;{\ge}3) \\ {\color{magenta}\le} \
\dim B(h;0){\cdot}e+\dim \g(h;{\ge}3) .
\end{multline} 
This suggests the following approach to proving Eq.~\eqref{eq:d-min} and also a more precise 
(conjectural)  statement. 
In order to obtain a $\be$-ideal of minimal dimension in $\cI(\be)_\co$, 
we attempt to reduce $\g(h;2)$, while keeping  $ \g(h;{\ge}3)$ intact. If $e\in\co\cap\g(h;2)$ is
such that $B(h;0)_e$ is a Borel subgroup of $G(h;0)_e=G_e^{red}$, then
$B(h;0){\cdot}e$ has the minimal dimension among all $B(h;0)$-orbits in $\co\cap\g(h;2)$ and the 
equality holds in Eq.~\eqref{eq:approach}.
Furthermore, if one can find $e$ such that $\ov{B(h;0){\cdot}e}$ is a subspace, then
$\ov{B(h;0){\cdot}e}\oplus \g(h;{\ge}3)$ is a required $\be$-ideal for $\co$. This provides our conjectural assertion.

\begin{conj}   \label{conj:utochnenie}
Let $\g=\bigoplus_{i\in\BZ}\g(h;i)$ be the grading associated with an $\tri$-triple $\{e,h,f\}$. Let
$B(h;0)$ be a Borel subgroup of $G(h;0)$. Then there is a $B(h;0)$-orbit of minimal dimension in
$G{\cdot}e\cap\g(h;2)$ whose closure is a subspace of\/ $\g(h;2)$.
\end{conj}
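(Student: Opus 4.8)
The plan is to realise the desired orbit via the heuristic preceding the conjecture: to find $e'\in\co\cap\g(h;2)$ for which {\sf (a)} $(B(h;0)_{e'})^\circ$ is a Borel subgroup of $G(h;0)_{e'}=G_{e'}^{red}$, and {\sf (b)} $\ov{B(h;0){\cdot}e'}$ is a linear subspace of $\g(h;2)$. If such an $e'$ exists, then $\ce:=\ov{B(h;0){\cdot}e'}\oplus\g(h;{\ge}3)$ is $B$-stable (since $[\g(h;{\ge}1),\g(h;2)]\subseteq\g(h;{\ge}3)$), lies inside $\ce_{\sf Dy}(\co)$ and contains $e'$, hence $\ce\in\cI(\be)_\co$; and by {\sf (a)}, together with the computation in \eqref{eq:approach}, the orbit $B(h;0){\cdot}e'$ has the minimal possible dimension among all $B(h;0)$-orbits in $\co\cap\g(h;2)$. (As a by-product $\dim\ce=\dim B-\dim B(G_e)$, so a proof along these lines would also reprove \eqref{eq:d-min} without a case-by-case analysis.)

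Step {\sf (a)} is the soft one. For every $x\in\co\cap\g(h;2)$ the subgroup $B\cap G_x$ is solvable, whence $\dim B{\cdot}x\ge\dim B-\dim B(G_x)=\dim B-\dim B(G_e)$; since the projection $\g(h;{\ge}2)\to\g(h;2)$ maps $B{\cdot}x$ onto $B(h;0){\cdot}x$ with fibres of dimension $\le\dim\g(h;{\ge}3)$, this gives $\dim B(h;0){\cdot}x\ge\dim B-\dim B(G_e)-\dim\g(h;{\ge}3)$, and by \eqref{eq:b_e} and \eqref{eq:G_e-unip} the right-hand side equals $\dim B(h;0)-\dim B(G_e^{red})$. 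The bound is attained: by the Borel fixed point theorem $B(G_e^{red})$ fixes a point of the flag variety of $G(h;0)$, i.e. there is a Borel subgroup $B'\subseteq G(h;0)$ with $(B'\cap G_e^{red})^\circ$ a Borel subgroup of $G_e^{red}$; applying to $e$ an element of $G(h;0)$ that conjugates $B'$ to $B(h;0)$ produces $e'\in\co\cap\g(h;2)$ satisfying {\sf (a)}.

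Step {\sf (b)} is the essential difficulty, for {\sf (a)} determines $e'$ only up to the choices of $B'$ and of the conjugating element and controls nothing about the geometry of $\ov{B(h;0){\cdot}e'}$. One structural simplification is available: $\ad h$ acts on $\g(h;2)$ as the scalar $2$, so the one-parameter subgroup $\{\exp(z\,\ad h)\mid z\in\BC\}\subseteq T\subseteq B(h;0)$ acts there by homotheties; hence $\ov{B(h;0){\cdot}e'}$ is always a closed irreducible cone with vertex $0$, and --- just as in the proof of Corollary~\ref{prop:equiv-glatt} --- it is a linear subspace as soon as it is a smooth variety. So {\sf (b)} amounts to choosing, among the $e'$ of Step {\sf (a)}, one whose orbit closure is smooth; the natural candidates are $e'=\sum_{\gamma\in R}c_\gamma e_\gamma$ for a set $R$ of roots of $\g(h;2)$ with generic coefficients, for which one expects $\ov{B(h;0){\cdot}e'}$ to be the span of the root spaces $\g^\gamma\subseteq\g(h;2)$ reachable from $R$ by adding simple roots of $\g(h;0)$, which is automatically linear. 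The obstacle, for which I see no type-free argument, is to produce one such $R$ that both forces $(B(h;0)_{e'})^\circ$ to be a Borel subgroup of $G_{e'}^{red}$ and has the correct cardinality. Since even \eqref{eq:d-min} is presently known only case by case (\cite{kaw86, fang14}), the realistic route is probably a case-by-case verification in the same spirit: in the classical types by reading off $e'$ from the partitions describing $\co$ and $\ce_{\sf Dy}(\co)$, and by a finite check in the exceptional types.
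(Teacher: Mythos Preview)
This statement is labelled a \emph{Conjecture} in the paper, and the paper supplies no proof of it: the only remark following the conjecture is ``Using Jordan normal form, we can prove this conjecture for $\g=\sln$'', with no argument given. So there is no proof in the paper to compare against.

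Your write-up is not a proof either, and you say so yourself: Step~{\sf (b)} is the entire difficulty, and you correctly flag it as open. What you do contribute is sound scaffolding that goes somewhat beyond the paper's heuristic. Your Step~{\sf (a)} is a clean general argument that a $B(h;0)$-orbit of the minimal possible dimension $\dim B(h;0)-\dim B(G_e^{red})$ always exists in $\co\cap\g(h;2)$; the paper's discussion around \eqref{eq:approach} only records the inequality and the strategy, not this existence statement. Your reduction in Step~{\sf (b)} --- that the one-parameter subgroup through $h$ acts on $\g(h;2)$ by scalars, so $\ov{B(h;0){\cdot}e'}$ is automatically an irreducible cone with vertex $0$, hence linear if and only if smooth --- is a genuine simplification not made explicit in the paper.

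In short: neither the paper nor you proves the conjecture. Your outline is correct as far as it goes, the pieces you do establish (existence of a minimal-dimensional orbit, the cone/smoothness reduction) are valid, and your honest assessment that the remaining step likely requires a case-by-case analysis matches the status the paper itself assigns to the problem.
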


Using Jordan normal form, we can prove this conjecture for $\g=\sln$. However, our proof does not readily 
generalise to the other classical series. Therefore, we omit it.

\section{Classification of the extreme orbits}
\label{sect:classification}

\noindent
Our next goal is to classify the extreme orbits in all simple Lie algebras. Clearly, the trivial orbit
$\co=\{0\}$ is extreme, with $d_{\sf Dy}(\co)=d_{\sf min}(\co)=0$. We describe below all non-trivial
extreme orbits.

\begin{ex}   \label{ex:prin+min-nilp}
The principal $\co_{pr}$ and minimal $\co_{min}$ nilpotent orbits are extreme. The corresponding
values are:   

\textbullet \quad $\co_{pr}$:   \ $\dim\g(h;2)=\rk G,\ \rk G_e=0$;

\textbullet  \quad $\co_{min}$:   $\dim\g(h;2)=1,\ \rk G_e=\rk G-1$.
\end{ex}

\begin{thm}   \label{thm:extrem-sl-sp}
{\sf (i)} \ For\/ 
$\GR{A}{n}$, the extreme orbits $\co(\blb)$ correspond to the following partitions:

-- \ if\/ $\g=\sltn$, then $\blb=(2m, 1^{2n-2m})$ with $m=0,1,\dots,n$;

-- \ if\/ $\g=\mathfrak{sl}_{2n+1}$, then $\blb=(2m, 1^{2n+1-2m})$ with $m=0,1,\dots,n$ \ \un{or}\\
\centerline{$\blb=(m,2n+1-m)$ with $m=n+1,\dots,2n$ \  \un{or} \ $\blb=(2n+1)$.}

{\sf (ii)} \ For\/ 
$\GR{C}{n}$, the extreme orbits $\co(\blb)$ correspond to the partitions
$\blb=(2m, 1^{2n-2m})$ with $m=0,1,\dots,n$.
\end{thm}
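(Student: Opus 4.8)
The plan is to reduce the classification to the criterion of Proposition~\ref{prop:extrim}(i), namely that $\co(\blb)$ is extreme if and only if the roots occurring in $\g(h;2)$ are linearly independent, equivalently $\dim\g(h;2)+\rk G_e=\rk G$. For $\GR{A}{n}$ and $\GR{C}{n}$ this means we must, for each partition $\blb$, compute the weighted Dynkin diagram (equivalently the semisimple element $h$ of the adapted $\tri$-triple), read off $\dim\g(h;2)$, and compare with $\rk G-\rk G_e$. First I would recall the standard recipe: for $e\in\co(\blb)$ in $\gln$, the $\tri$-triple is built from the Jordan blocks, and the eigenvalues of $h$ are the disjoint union, over the parts $\lb_i$, of the strings $(\lb_i-1,\lb_i-3,\dots,1-\lb_i)$; the weighted Dynkin diagram then records the consecutive differences of the sorted eigenvalue list. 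For $\GR{C}{n}=\spn$ one uses the same $h$ but viewed inside $\spn$, and $\rk G_e$ is computed from the well-known centraliser formulas (e.g.\ \cite[6.1]{cm}): for $\spn$, $\rk G_e$ equals the number of \emph{even} parts of $\blb$ (each even part being repeated an even number of times contributes, and odd parts come in pairs each contributing an $\mathfrak{sp}_2$).

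Next I would treat the two families of candidate partitions directly. For $\blb=(2m,1^{2n-2m})$ in type $\GR{A}{n}$ (with $\dim\g=N^2-1$, $N=2n$ or $2n+1$): the Jordan form has one block of size $2m$ and $N-2m$ blocks of size $1$, so the eigenvalues of $h$ are $2m-1,2m-3,\dots,1,-1,\dots,1-2m$ together with $N-2m$ zeros. One checks that $\g(h;2)$ as a subspace of $\sln$ then has dimension exactly $m$ (the block of size $2m$ contributes the $m$ ``super-diagonal at distance contributing weight $2$'' entries; the zero block interacts only in weight $\le 1$), and that these $m$ root spaces are $\esi_1-\esi_3,\ \esi_2-\esi_4,\dots$, an obviously linearly independent set; meanwhile $\rk G_e$ works out so that \eqref{eq:extrim} holds. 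Conversely, for any partition \emph{not} of this shape I would exhibit a linear dependence among the weight-$2$ roots: the key combinatorial point is that $\dim\g(h;2)$ counts certain ``gap-two'' adjacencies in the multiset of eigenvalues, and having either two parts of size $\ge 2$, or (in the $\GR{A}{2n+1}$ case) a two-part partition outside the listed range, forces either a repeated weight-$2$ root or a relation $\gamma_1+\gamma_3=\gamma_2$ among them — the precise bookkeeping being the part of the argument that needs care. The extra $\GR{A}{2n+1}$ families $\blb=(m,2n+1-m)$ and $\blb=(2n+1)$ I would handle by the same eigenvalue computation: here $h$ is a single merged string, $\g(h;2)$ is small, and linear independence is immediate; the point is that in odd rank a single large block or a suitably balanced two-block partition still gives independent weight-$2$ roots, whereas in even rank the two-block case fails parity.

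For part (ii), type $\GR{C}{n}$, the subtlety is that one must work inside $\spn$, so $\g(h;2)\cap\spn$ is cut out by a symmetry condition and $\rk G_e$ is not the same as in $\gln$. I would again take $e\in\co(2m,1^{2n-2m})\subset\spn$ (this is a valid symplectic partition since $1$ occurs $2n-2m$ times, an even number), compute $h$ exactly as before, observe $\dim(\g(h;2)\cap\spn)=m$, note the weight-$2$ roots are again visibly independent, and use $\rk(Sp_{2n})_e=m$ (one copy of $\mathfrak{sp}_2$ per pair of the $2n-2m$ size-one blocks gives $n-m$, plus the single even block $2m$ contributes $1$... I would instead cite the centraliser reductive type $\mathfrak{sp}_{2n-2m}\times\mathfrak{so}_1$-type data from \cite[6.1.3]{cm} to get $\rk G_e=n-m$, whence $\dim\g(h;2)+\rk G_e=m+(n-m)=n=\rk Sp_{2n}$). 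For the converse, any symplectic partition with two parts $\ge 2$ again produces a dependence among the weight-$2$ roots by the same gap-counting argument, now respecting the symplectic symmetry. The main obstacle throughout is the bookkeeping in the converse direction: translating ``the partition has a repeated part $\ge 2$ or two distinct parts $\ge 2$'' into an explicit linear relation $\sum c_i\gamma_i=0$ among the roots of $\g(h;2)$, uniformly over the three classical families, and making sure the symplectic folding in type $\GR{C}{n}$ does not accidentally restore independence; I would organise this by sorting the eigenvalue multiset of $h$ and arguing that $\dim\g(h;2)$ equals the number of indices where the sorted list drops by exactly $2$, while $\rk G$ equals the number of indices where it drops by at least $1$ (suitably interpreted), so \eqref{eq:extrim} becomes the purely combinatorial statement that every drop of size $\ge 1$ is in fact a drop of exactly $2$ with no ``coincidences'' — which pins down precisely the listed partitions.
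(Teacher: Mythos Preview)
Your overall strategy---reduce to the criterion of Proposition~\ref{prop:extrim}(i) and compute $\dim\g(h;2)$ and $\rk G_e$ for each partition---is exactly the paper's. But your execution has errors, and you are missing the organising device that makes the converse clean.

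Concrete errors: for $\blb=(2m,1^{N-2m})$ in $\gln$ the weight-$2$ subspace has dimension $2m-1$, not $m$ (your value $m$ is the correct answer for $\spn$, not for $\sln$); with any standard arrangement of $h$ the roots $\esi_1-\esi_3,\esi_2-\esi_4,\dots$ you list do not lie in $\g(h;2)$. More seriously, your closing combinatorial reformulation---``$\dim\g(h;2)$ equals the number of indices where the sorted eigenvalue list drops by exactly $2$''---is false: for $\blb=(2,2)$ in $\mathfrak{gl}_4$ the sorted list $(1,1,-1,-1)$ has a single such drop, but $\dim\g(h;2)=4$. So the gap-counting scheme you sketch for the converse cannot work as stated; $\dim\g(h;2)$ counts \emph{all} pairs of $h$-eigenvalues differing by $2$, with multiplicity, not consecutive drops.

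The paper's device, which replaces your bookkeeping, is the $\langle e,h,f\rangle$-module decomposition $\gln=\bigoplus_{i,j}\eus R(\lb_i{-}1)\otimes\eus R(\lb_j{-}1)$ (respectively $\spn=\bigoplus_i\eus S^2\eus R(\lb_i{-}1)\oplus\bigoplus_{i<j}\eus R(\lb_i{-}1)\otimes\eus R(\lb_j{-}1)$), with Clebsch--Gordan giving the weight-$2$ multiplicity in each summand. The key observation in type $\GR{A}{}$ is that the \emph{diagonal} summands $i=j$ already contribute exactly $\sum_i(\lb_i-1)=n-t=\rk G-\rk G_e$ to $\dim\g(h;2)$; hence \eqref{eq:extrim} holds if and only if every \emph{off-diagonal} summand has trivial weight-$2$ part. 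That is the simple parity condition ``$\lb_i,\lb_j$ have the same parity and are not both $1$'' never occurring for $i\ne j$, which immediately yields the listed partitions. For $\spn$ the paper runs the analogous count with $\eus S^2$, then splits $\blb=(\blb^{ev};\blb^{odd})$ to reduce to the two pure-parity cases. This module-theoretic count is the missing idea; once you have it, there is no need to hunt for explicit linear dependences among roots.
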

\begin{proof}
Recall that $\{e,h,f\}$ is an $\tri$-triple and $e\in\co(\blb)$. Let $\eus R(d)$ be the simple 
$\langle e,h,f\rangle$-module of dimension $d+1$. The $h$-weights in $\eus R(d)$ are
$d, d{-}2,\dots,2{-}d,-d$, i.e., the $h$-weight `2' occurs if and only if $d$ is even.
\par
(i) \  It is harmless but notationally more convenient to replace $\sln$ with $\gln$. Then $G=GL_n$.
If $\blb=(\lb_1,\dots,\lb_t)$, then $\rk G_e=t$ and 
\[
\gln =  \bigoplus_{i,j=1}^t \eus R(\lb_i-1)\otimes \eus R(\lb_j-1)
\]
as $\langle e,h,f\rangle$-module. The Clebsch-Gordan formula implies that
the subspace of $h$-weight `$2$' in $\eus R(\lb_i-1)\otimes \eus R(\lb_i-1)$
is of dimension $\lb_i-1$. Hence the sum 
$\bigoplus_{i=1}^t \eus R(\lb_i-1)\otimes \eus R(\lb_i-1)$ yields a subspace of dimension
$\sum_{i=1}^t( \lb_i-1)=\rk G-\rk G_e$ in $\g(h;2)$. Therefore, $\co(\blb)$ is extreme if and only if 
neither of the remaining $\langle e,h,f\rangle$-modules
$\eus R(\lb_i-1)\otimes \eus R(\lb_j-1)$ with $i\ne j$ has $h$-weight `$2$'.
It is easily seen that the $h$-weight `$2$' occurs exactly in the following cases:

a) \ both $\lb_i$ and $\lb_j$ are even;

b) \ both $\lb_i$ and $\lb_j$ are odd and at least one them is bigger than $1$.
\\
Excluding these ``bad'' possibilities, we obtain the required list of $\blb$'s.

(ii) \  A partition $\blb=(\lb_1,\dots,\lb_t)$ of $2n$ represents a nilpotent orbit $\co(\blb)=G{\cdot}e$ in $\spn$ 
if and only if each part of odd size occurs an even number of times~\cite[5.1]{cm}. Then 
\[
   \spn = \left[\bigoplus_{i=1}^t \eus S^2\eus R(\lb_i-1)\right] \oplus
   \left[\bigoplus_{i<j} \eus R(\lb_i-1)\otimes \eus R(\lb_j-1)\right]
\]
as $\langle e,h,f\rangle$-module. Using the Clebsch-Gordan formula, one verifies that
the subspace of $h$-weight `$2$' in $\eus S^2\eus R(\lb_i-1)$
is of dimension $[\lb_i/2]$.

(ii-1) \ {\sl Suppose that  all $\lb_i$ are even}. Then the sum 
$\bigoplus_{i=1}^t \eus S^2\eus R(\lb_i-1)$ already yields a subspace of dimension 
$\frac{1}{2}\sum \lb_i=n=\rk G$ in $\g(h;2)$. Here Eq.~\eqref{eq:extrim} is only satisfied if $t=1$, i.e.,
$\blb=(2n)$ and  $e\in \co_{pr}$.

(ii-2) \ {\sl Suppose that  all $\lb_i$ are odd}. Using a slightly different notation, we can write
\[
   \blb=((2m_1+1)^{2k_1},\dots, (2m_s+1)^{2k_s}) ,
\] 
with $m_1>\dots > m_s\ge 0$. Here 
$G_e^{red}=Sp_{2k_1}\times \ldots \times Sp_{2k_s}$~\cite[Theorem\,6.1.3]{cm} and
$\rk G_e=\sum_{j=1}^s k_j$. Then 
the sum $\bigoplus_{i=1}^t \eus S^2\eus R(\lb_i-1)=\bigoplus_{j=1}^s 2k_j\, \eus S^2\eus R(2m_j)$
yields a subspace of dimension 
$\sum_{j=1}^s 2k_jm_j$ in $\g(h;2)$. Therefore, 
\[
   \dim\g(h;2) +\rk G_e\ge \sum_{j=1}^s k_j(2m_j+1)=n=\rk G .
\] 
Furthermore, if $m_1\ge 1$, then 
there is also a non-trivial contribution to $\g(h;2)$ from a summand of the form 
$\eus R(2m_1)\otimes \eus R(2m_1)$. Hence the only possibility for Eq.~\eqref{eq:extrim} to hold is $s=1$ and $m_1=0$, i.e., 
$\blb=(1^{2n})$ and $e=0$.

(ii-3) \ {\sl The general case}. Separating the even and odd parts of $\blb$, we can write
$\blb=(\blb^{ev}; \blb^{odd})$, which yields a decomposition
\[
   e=e'+e''\in \mathfrak{sp}(V')\oplus \mathfrak{sp}(V'')\subset \mathfrak{sp}(V), 
\]
where $V=V'\oplus V''$ and $\dim V=2n$.
Here $e'\in \co(\blb^{ev})\subset \mathfrak{sp}(V')$ and 
$e''\in \co(\blb^{odd})\subset \mathfrak{sp}(V'')$. Using the equality of 
$\mathfrak{sp}(V')\oplus \mathfrak{sp}(V'')$-modules
\[
  \mathfrak{sp}(V)\simeq \eus S^2(V)=\eus S^2(V')\oplus \eus S^2(V'')\oplus (V'\otimes V'')\simeq
  \mathfrak{sp}(V')\oplus \mathfrak{sp}(V'')\oplus (V'\otimes V'') , 
\]
one readily sees that if $\co(\blb)$ is extreme, then so are $\co(\blb^{ev})$ and $\co(\blb^{odd})$.
Then combining parts (ii-1) and (ii-2) shows that $e'$ is principal in $\mathfrak{sp}(V')$ and $e''=0$. Therefore, if $\dim V'=2m$, then $\blb$ must be of the form $(2m,1^{2n-2m})$, and 
this is also sufficient for $\co(\blb)$ to be extreme.
\end{proof}

\begin{rmk}   \label{rem:of-course-sl-sp}
The partitions in Theorem~\ref{thm:extrem-sl-sp} include those corresponding to 
$\co_{pr}$ and $\co_{min}$. 

\qquad $\sln$ \ \ \ : \ $\co_{pr}=\co(n)$, \quad \ $\co_{min}=\co(2,1^{n-2})$.

\qquad $\spn$ \ : \ $\co_{pr}=\co(2n)$, \quad $\co_{min}=\co(2,1^{2n-2})$.
\end{rmk}

\begin{thm}   \label{thm:extrem-so}  Let $\g$ be an orthogonal Lie algebra. 
\\
{\sf (i)}  For type\/ $\GR{B}{n}$, the nontrivial extreme orbits $\co(\blb)$ correspond to the following partitions $\blb$:
\begin{itemize}
\item $(2,2,1^{2n-3})$   \qquad {\rm [$\co_{min}$]}
\item $(2n-3,2,2)$   \quad {\rm ["intermediate" orbit]}
\item $(2n+1)$   \qquad {\rm [$\co_{pr}$]}
\end{itemize}
{\sf (ii)}  For type\/ $\GR{D}{n}$, the nontrivial extreme orbits $\co(\blb)$ correspond to the following partitions $\blb$:
\begin{itemize}
\item $(2,2,1^{2n-4})$    \qquad {\rm [$\co_{min}$]}
\item $(2n-5,2,2,1)$    \quad  {\rm ["intermediate" orbit]}
\item $(2n-1,1)$     \qquad {\rm [$\co_{pr}$]}
\end{itemize}
\end{thm}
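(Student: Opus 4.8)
The plan is to turn the extremeness criterion \eqref{eq:extrim} of Proposition~\ref{prop:extrim} into an explicit condition on the partition $\blb$, in the spirit of the proof of Theorem~\ref{thm:extrem-sl-sp}(ii). Let $\g=\son$ with standard representation $V=\BC^N$, so $N=2n+1$ in type $\GR{B}{n}$ and $N=2n$ in type $\GR{D}{n}$, and $\rk G=[N/2]=n$ in both cases. I would pick $e\in\co(\blb)$ and an $\tri$-triple $\{e,h,f\}$, and decompose $V=\bigoplus_i\eus R(\lb_i-1)$ as an $\langle e,h,f\rangle$-module; since $\g\simeq\Lambda^2 V$,
\[
  \g\ \simeq\ \bigoplus_i\Lambda^2\eus R(\lb_i-1)\ \oplus\ \bigoplus_{i<j}\eus R(\lb_i-1)\otimes\eus R(\lb_j-1).
\]
A Clebsch--Gordan computation then expresses $\dim\g(h;2)$ as $\sum_i[(\lb_i-1)/2]+\sum_{i<j}c_{ij}$, where $c_{ij}=\min(\lb_i,\lb_j)$ if $\lb_i,\lb_j$ are distinct of the same parity, $c_{ij}=\lb_i-1$ if $\lb_i=\lb_j$, and $c_{ij}=0$ if $\lb_i,\lb_j$ have opposite parity. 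On the other side, $G_e^{red}\simeq\prod_{p\ \textrm{odd}}O_{m_p}\times\prod_{p\ \textrm{even}}Sp_{m_p}$, where $m_p$ is the multiplicity of the part $p$, so $\rk G_e=\sum_{p\ \textrm{odd}}[m_p/2]+\sum_{p\ \textrm{even}}m_p/2$.

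The main step is a reduction to partitions that are all-even or all-odd. I would split $\blb=(\blb^{ev};\blb^{odd})$; then $V=V'\oplus V''$ with $\dim V'$ even, $e=e'+e''$, and $\g\simeq\mathfrak{so}(V')\oplus\mathfrak{so}(V'')\oplus(V'\otimes V'')$. In $V'\otimes V''=\bigoplus\eus R(\lb_i-1)\otimes\eus R(\lb_j-1)$ the two tensor factors always have opposite parity, so this summand carries no $h$-weight $2$; hence $\dim\g(h;2)$ is the sum of the corresponding numbers for $\co(\blb^{ev})\subset\mathfrak{so}(V')$ and $\co(\blb^{odd})\subset\mathfrak{so}(V'')$. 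Since $\rk G_e$ and $\rk G$ decompose the same way (the latter because $\dim V'$ is even), Proposition~\ref{prop:extrim}(i) applied to each summand shows that $\co(\blb)$ is extreme in $\son$ if and only if $\co(\blb^{ev})$ is extreme in $\mathfrak{so}(V')$ and $\co(\blb^{odd})$ is extreme in $\mathfrak{so}(V'')$.

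It then remains to classify the all-even and all-odd extreme orbits. For an all-even $\blb$ with $t$ parts the excess $\dim\g(h;2)+\rk G_e-\rk G$ equals $\sum_{i<j}c_{ij}-t/2$, and for an all-odd $\blb$ it equals $\sum_{i<j}c_{ij}-[o/2]$, where $o$ is the number of distinct part-sizes of odd multiplicity; both are $\ge 0$ by Proposition~\ref{prop:extrim}(i). Grouping equal parts as $p_1>\dots>p_s$ with multiplicities $q_1,\dots,q_s$ gives $\sum_{i<j}c_{ij}=\sum_a\binom{q_a}{2}(p_a-1)+\sum_{a<b}q_aq_bp_b$, and I would bound this below crudely --- in the even case using $p_a\ge 2$ and $\sum q_a^2\le(\sum q_a)^2$, in the odd case isolating the (at most one) block with $p=1$, which contributes nothing on the diagonal --- to show that vanishing of the excess forces $\blb^{ev}\in\{\varnothing,(2,2)\}$ and $\blb^{odd}$ to be $\varnothing$, $(1^m)$, or the principal partition of $\mathfrak{so}_m$ (i.e.\ $(m)$ if $m$ is odd, $(m-1,1)$ if $m$ is even). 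The hard part will be this bookkeeping, in particular tracking the invariant $o$ and the $p=1$ block in the all-odd case.

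Finally I would reassemble. The admissible pairs $(\blb^{ev},\blb^{odd})$ with $\dim V'+\dim V''=N$ are: $(\varnothing,(1^{N}))$, yielding the trivial orbit; $(\varnothing,(N))$ for $\GR{B}{n}$ and $(\varnothing,(N-1,1))$ for $\GR{D}{n}$, yielding $\co_{pr}$; $((2,2),(1^{N-4}))$, yielding the orbit with partition $(2,2,1^{2n-3})$ resp.\ $(2,2,1^{2n-4})$, which is $\co_{min}$; and $((2,2),(N-4))$ for $\GR{B}{n}$, $((2,2),(N-5,1))$ for $\GR{D}{n}$, yielding the intermediate orbit with partition $(2n-3,2,2)$ resp.\ $(2n-5,2,2,1)$. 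Comparing dimensions via \eqref{eq:dim-induced} and the standard formula for $\dim\co(\blb)$ identifies these with the orbits named in the statement, and since none of the listed partitions is very even (for $n\ge 4$ in type $\GR{D}{n}$) the $SO_N/O_N$ ambiguity does not intervene.
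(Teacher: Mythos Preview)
Your proposal is correct and follows essentially the same route as the paper: the decomposition $\g\simeq\Lambda^2 V=\bigoplus_i\wedge^2\eus R(\lb_i-1)\oplus\bigoplus_{i<j}\eus R(\lb_i-1)\otimes\eus R(\lb_j-1)$, the even/odd split $\blb=(\blb^{ev};\blb^{odd})$ reducing to the all-even and all-odd cases, and the final reassembly are exactly what the paper does (it explicitly says the argument ``exploits the same approach as the proof of Theorem~\ref{thm:extrem-sl-sp}(ii)'' and omits the computational details). Your version is simply more explicit about the bookkeeping --- the formula for the $c_{ij}$, the excess $\sum c_{ij}-t/2$ resp.\ $\sum c_{ij}-[o/2]$, and the handling of the $p=1$ block --- where the paper just states the outputs.
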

\begin{proof}
Recall that a partition $\blb=(\lb_1,\dots,\lb_t)$ of $n$ represents a nilpotent orbit $\co(\blb)=G{\cdot}e$ in 
$\son$ if and only if each part of even size occurs an even number of times~\cite[5.1]{cm}. Then 
\[
   \son = \left[\bigoplus_{i=1}^t \wedge^2\eus R(\lb_i-1)\right] \oplus
   \left[\bigoplus_{i<j} \eus R(\lb_i-1)\otimes \eus R(\lb_j-1)\right] 
\]
as $\langle e,h,f\rangle$-module. Using the Clebsch-Gordan formula, one verifies that
the subspace of $h$-weight `$2$' in $\wedge^2\eus R(\lb_i-1)$
is of dimension $[\frac{\lb_i-1}{2}]$.
The subsequent argument exploits the same approach as the 
proof of Theorem~\ref{thm:extrem-sl-sp}(ii). Therefore, we omit computational details in the rest of the proof.

{\sl First}, we determine all possible partitions whose all parts are odd. The output is either $(1,\dots,1)$, 
or $(2n+1)$ for $\GR{B}{n}$, or $(2n-1,1)$ for $\GR{D}{n}$.
\\ \indent
{\sl Second}, we determine all possible partitions whose all parts are even. The only output here is
$\blb=(2,2)$.
\\ \indent
{\sl Third}, using the separation $\blb=(\blb^{ev}; \blb^{odd})$ and combining the above two possibilities yields exactly the partitions in the formulation.
\end{proof}

\noindent
Suppose that $\g$ is simple, and let $\theta\in\Delta^+$ be the highest root. Consider the root 
subsystem $\Delta(0)=\{\gamma\in\Delta\mid (\gamma,\theta)=0\}$ and the corresponding Levi
subalgebra $\el=\te\oplus(\bigoplus_{\gamma\in \Delta(0)}\g^\gamma)$. Let $\tilde e$ be a 
principal nilpotent element  of $\el$.

\begin{prop}    \label{prop:extrim-uniform}
If $\theta$ is fundamental, then the $G$-orbit $\tilde\co=G{\cdot}\tilde e$ is extreme, with $\rk G_{\tilde e}=1$.
\end{prop}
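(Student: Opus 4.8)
The plan is to use the characterisation of extreme orbits from Proposition~\ref{prop:extrim}, namely that $\tilde\co$ is extreme if and only if $\dim\g(h;2)+\rk G_{\tilde e}=\rk G$, and to reduce the question to a computation inside the Levi subalgebra $\el$. First I would fix a dominant $h$ with $\{\tilde e,h,f\}$ an adapted $\tri$-triple for $\tilde\co$; since $\tilde e$ is principal in $\el$, one may take $h=h_\el$, the characteristic of the principal orbit of $\el$, which lies in $\te$. The key point is that the $\ad h$-weight $2$ part of $\g$ splits as $\g(h;2)=\el(h;2)\oplus\bigl(\bigoplus_{\gamma}\g^\gamma\bigr)$, where $\gamma$ runs over roots outside $\Delta(0)$ with $\gamma(h)=2$. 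Because $\tilde e$ is principal in $\el$, we have $\dim\el(h;2)=\rk\el$ (the number of positive roots of height $1$ with respect to the principal grading, which equals the rank), and $G_{\tilde e}^{red}$ acting on $\el$-pieces contributes $\rk\el=\rk G-1$ once we know $\rk G_{\tilde e}=1$; so the whole problem is to control the ``extra'' root spaces $\g^\gamma$ with $\gamma\notin\Delta(0)$ and $\gamma(h)=2$.

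The main step, then, is to show two things: (a) $\rk G_{\tilde e}=1$, and (b) the roots $\gamma$ with $(\gamma,\theta)\ne 0$ and $\gamma(h)=2$ are exactly accounted for so that the total matches $\rk G$. For (a): the reductive part $G_{\tilde e}^{red}$ of the centraliser of a principal nilpotent of $\el$ inside $\el$ is the centre of $L$ (which is a torus), but inside $G$ the centraliser can be larger; however, since $\theta$ is fundamental, the simple root not orthogonal to $\theta$ is unique, the lowest weight of the $\el$-module $\g^\theta\oplus(\text{string down to }\g^{-\theta})$ is one-dimensional, and a dimension count using $\dim\g_{\tilde e}=\rk\g+2(\text{number of positive roots of height}\ge 2\ \text{missing})$ is cleanest via the formula $\dim\g_{\tilde e}=\dim\g(h;0)+\dim\g(h;1)$. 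Here $\g(h;0)=\el(h;0)$ is a Cartan of $\el$, so $\dim\g(h;0)=\rk G$, and $\g(h;1)$ consists of those roots outside $\Delta(0)$ taking value $1$ on $h$. One then checks, using that $\theta$ is fundamental, that $\dim\g(h;1)=\rk G_{\tilde e}-\rk\el+(\text{contribution})$... the honest statement is that the $\langle \tilde e,h,f\rangle$-module $\g/\el$ decomposes and, $\theta$ being fundamental, contains the adjoint $\el$-module $\g^\theta\oplus\dots$ as a single isotypic piece whose fixed space under $\langle \tilde e,h,f\rangle$ is $1$-dimensional, giving $\rk G_{\tilde e}=\rk G_{\tilde e}^{red}$ with the extra $1$ coming from the highest-root module while the rest of $\g/\el$ contributes nothing to the centraliser rank — because it is a sum of $\el$-modules with no $\tri$-trivial summand once $\theta$ is fundamental.

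For (b), I would argue that $\dim\g(h;2)=\rk\el+\#\{\gamma\in\Delta^+\setminus\Delta(0): \gamma(h)=2\}$, and that the second set, together with the fact that $\g/\el$ as an $\el$-module (for $\theta$ fundamental) is a ``small'' module, contributes exactly $1$: the only root of the principal grading on $\g^\theta\oplus\dots\oplus\g^{-\theta}$ landing in weight $2$ is the one corresponding to the highest weight of that string being hit once, matching the single extra unit in $\rk G_{\tilde e}$, and every other root $\gamma\notin\Delta(0)$ with $\gamma(h)=2$ must be paired off. Concretely $\dim\g(h;2)+\rk G_{\tilde e}=(\rk\el)+1+(\rk G_{\tilde e}^{red}+1)$ must collapse to $\rk G$; keeping track of the bookkeeping is where care is needed. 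I expect the main obstacle to be precisely this bookkeeping: isolating which non-$\Delta(0)$ roots sit in $\ad h$-weight $2$ and matching them against the centraliser rank, all under the hypothesis that $\theta$ is fundamental. A clean way to sidestep case analysis would be to invoke Proposition~\ref{prop:extrim}(ii): $\tilde\co$ is extreme iff the derived group of $G(h;0)=L$ acts trivially on $\g(h;2)$; since $\tilde e$ is principal in $\el$, one shows $\g(h;2)$ is, as an $L$-module, a sum of the principal weight-$2$ space of $\el$ (on which $[L,L]$ acts, but which is exactly $\el(h;2)$ — and the point is $[L,L]$ acting on its own $\ad$-action weight-$2$ space is NOT trivial, so one must instead note $\g(h;2)\subset\g(h_\el;2)$ for the principal $h_\el$ of $G$, wait—this needs the fundamentality of $\theta$ to force $\el(h;2)=0$, i.e. $\tilde e$ even in $\el$)... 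The honest cleanest route, then: when $\theta$ is fundamental, $h$ restricted to $\el$ is the characteristic of the principal orbit, and the weight-$2$ space of the \emph{ambient} grading lies entirely in $\g/\el$ precisely when that principal orbit of $\el$ is even, which one verifies; then $[L,L]$ acts on $\g(h;2)\subset\g/\el$ through the highest-root $\el$-module, whose $[L,L]$-action is trivial exactly because $\theta$ is fundamental (the module is $1$-dimensional over the part of $L$ that matters). I would present the argument in the order: (1) set up $h$; (2) identify $\g(h;\ge 1)$ with the data of $\theta$ fundamental; (3) compute $\rk G_{\tilde e}=1$; (4) apply Proposition~\ref{prop:extrim}(ii) or \eqref{eq:extrim} to conclude.
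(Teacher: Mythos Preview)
Your proposal does not yet contain a proof; the key structural step is missing, and your bookkeeping is off as a result. You try to count the contribution to $\g(\tilde h;2)$ from roots outside $\Delta(0)$ and assert that it is ``exactly $1$''. In fact it is \emph{zero}: one has $\g(\tilde h;2)=\el(\tilde h;2)$, of dimension $\rk G-1$, and then $\rk G_{\tilde e}=1$ gives the equality~\eqref{eq:extrim}. The point you are missing is the auxiliary $\BZ$-grading by the characteristic $h$ of the \emph{highest root} $\tri$-triple $\{e_\theta,h,f_\theta\}$. Because $\theta$ is fundamental, this grading has length five, $\g=\bigoplus_{i=-2}^{2}\g(h;i)$, with $\g(h;0)=\el$, $\g(h;\pm2)=\g^{\pm\theta}$, and $\g(h;1)$ a \emph{symplectic simple} $[\el,\el]$-module (the nilradical is Heisenberg). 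Since $\{\tilde e,\tilde h,\tilde f\}$ is principal in $[\el,\el]$, all $\tilde h$-weights on the even-dimensional simple module $\g(h;\pm1)$ are odd, while $[\tilde h,e_\theta]=0$ because the two $\tri$-triples commute. Hence no root outside $\Delta(0)$ lands in $\tilde h$-weight $2$, and simultaneously $\g(\tilde h;0)=\te\oplus\g^\theta\oplus\g^{-\theta}$, whose derived algebra is the $\tri$ spanned by $e_\theta,h,f_\theta$. This immediately gives $\rk G_{\tilde e}=1$ (the reductive centraliser is exactly that $\tri$) and makes Proposition~\ref{prop:extrim}(ii) transparent: the derived group of $G(\tilde h;0)$ is this highest-root $SL_2$, which centralises $\el$ and hence acts trivially on $\g(\tilde h;2)\subset\el$.

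Your attempts to use Proposition~\ref{prop:extrim}(ii) directly also founder on a confusion: $G(\tilde h;0)$ is \emph{not} $L$. Its Lie algebra is $\g(\tilde h;0)$, not $\el$; once you know $\g(\tilde h;0)=\te\oplus\g^{\pm\theta}$ the criterion becomes immediate, but this identification again requires the parity argument above. Without the Heisenberg/5-term grading and the symplectic-module observation, the ``bookkeeping'' you anticipate as the obstacle cannot be resolved case-free.
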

\begin{proof}
Since $\theta$ is fundamental, there is a unique $\beta\in\Pi$ such that $(\beta,\theta)\ne 0$, and 
such $\beta$ is long. In particular, $\Pi\setminus\{\beta\}\subset \Delta(0)$ and 
$\rk \Delta(0)=\rk\Delta -1$. Let $\{e_\theta,h,f_\theta\}$ be an $\tri$-triple containing 
$e_\theta\in \g^\theta$ and $f_\theta\in \g^{-\theta}$. For the $\BZ$-grading of $\g$ determined by
$h$, we have:
\[
  \g=\bigoplus_{i=-2}^2 \g(h;i), \ \g(h;0)=\el, \ \g(h;2)=\g^\theta=\langle e_\theta\rangle .
\] 
Here $\el=[\el,\el]\oplus \langle h\rangle$,  
$\g(h;{\ge}0)=\p\{\beta\}$ is the maximal parabolic subalgebra corresponding to $\beta$, and
$\p\{\beta\}^{nil}$ is a Heisenberg Lie algebra.
Let $\{\tilde e,\tilde h,\tilde f\}$ be a principal $\tri$-triple in $[\el,\el]$. Since $[\el,\el]$ is the 
reductive part of the centraliser of $e_\theta$,
the two $\tri$-triples under consideration (pairwise) commute. Without loss of generality, we may also 
assume that $\tilde h\in \te\subset\el$ and $\ap(\tilde h)=2$ for all $\ap\in\Pi\setminus \{\beta\}$. 

Since $\tilde e$ is principal in $\el$, the $\tilde h$-weights in $\el=\g(h;0)$ are even.
If $\el=\bigoplus_{i\in\BZ}\el({\tilde h}; 2i)$, then $\el({\tilde h};0)=\te$ and $\el({\tilde h};2)=
\bigoplus_{\ap\in\Pi\setminus \{\beta\}}\g^\ap$. On the other hand, $\g(h;1)$ is a symplectic
simple $[\el,\el]$-module. Since the triple $\{\tilde e,\tilde h,\tilde f\}\subset [\el,\el]$ is principal, the 
$\tilde h$-weights in $\g(h;1)$ are odd. Recall that $[\tilde h,e_\theta]=0$.
Therefore, $\g({\tilde h};0)=\g^{-\theta}\oplus\te\oplus \g^{\theta}$ and 
$\dim \g({\tilde h};2)=\dim \el({\tilde h};2)= \dim\te-1$. It follows that 
$\g_{\tilde e}^{red}=\langle e_\theta,h,f_\theta\rangle$ and $\rk G_{\tilde e}=1$.

Thus, $\rk G_{\tilde e}+\dim \g({\tilde h};2)=\rk G$, and we are done.
\end{proof}

\noindent
The orbits considered in Proposition~\ref{prop:extrim-uniform} are said to be {\it intermediate\/} and 
denoted by $\co_{imd}$. One easily verifies that the  ``intermediate'' orbits of 
Theorem~\ref{thm:extrem-so} coincide with those occurring in Proposition~\ref{prop:extrim-uniform} 
in the orthogonal case.

\begin{thm}  \label{thm:extrem-excepts} 
If $\g$ is simple and the highest root $\theta$ is fundamental (i.e., $\g\ne\sln$ or $\spn$), then
there are exactly three non-trivial extreme orbits: $\co_{pr}$, $\co_{min}$, and the intermediate orbit
$\co_{imd}$. 
\end{thm}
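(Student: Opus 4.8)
The plan is to reduce the classification of extreme orbits in the exceptional types to a finite, type-by-type check, using the characterisation from Proposition~\ref{prop:extrim}(i): $\co=G{\cdot}e$ is extreme if and only if $\dim\g(h;2)+\rk G_e=\rk G$, where $\{e,h,f\}$ is an adapted $\tri$-triple. Since $\g$ is one of $\GR{G}{2},\GR{F}{4},\GR{E}{6},\GR{E}{7},\GR{E}{8}$, there are only finitely many nilpotent orbits, and for each the weighted Dynkin diagram (hence $\dim\g(h;2)$) and the reductive part $G_e^{red}$ (hence $\rk G_e$) are tabulated in \cite{cm} (and \cite{spalt82}). So in principle one simply runs through the tables and extracts the orbits satisfying~\eqref{eq:extrim}. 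The point of the theorem is the clean \emph{a posteriori} outcome: exactly three non-trivial solutions, namely $\co_{pr}$, $\co_{min}$, $\co_{imd}$.

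\textbf{Step 1.} Recall from Example~\ref{ex:prin+min-nilp} that $\co_{pr}$ (with $\dim\g(h;2)=\rk G$, $\rk G_e=0$) and $\co_{min}$ (with $\dim\g(h;2)=1$, $\rk G_e=\rk G-1$) are always extreme, and from Proposition~\ref{prop:extrim-uniform} that, since $\theta$ is fundamental, $\co_{imd}=G{\cdot}\tilde e$ is extreme with $\dim\g(\tilde h;2)=\rk G-1$ and $\rk G_{\tilde e}=1$. Thus the three listed orbits do satisfy~\eqref{eq:extrim}; it remains to show there are no others.

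\textbf{Step 2 (the bulk of the argument).} Go type by type. For each non-trivial $\co$ other than the three above, exhibit the inequality $\dim\g(h;2)+\rk G_e>\rk G$ as strict. Two mild reductions cut down the work. First, $\co$ is even iff $\dim\g(h;{\ge}2)=\tfrac12\dim\co$ (Section~\ref{subs:sl2}); for an even orbit $\g(h;1)=0$, so $\rk G_e=\rk G_e^{red}$ equals the rank of the reductive centraliser and $\dim\g(h;2)=\tfrac12\dim\co$, which for all but $\co_{pr}$ already overshoots $\rk G$ in the exceptional types — a quick scan of the even-orbit rows of the tables. Second, for non-even orbits, use that $\dim\g(h;2)$ is the number of positive roots $\gamma$ with $\gamma(h)=2$, read directly off the wDd, and compare with $\rk G-\rk G_e^{red}$, which is small precisely when $G_e^{red}$ has large semisimple rank. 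Concretely: list, for the given type, all orbits with $\rk G_e\ge \rk G-2$ (there are very few), and check~\eqref{eq:extrim} by hand on those; all orbits with $\rk G_e\le \rk G-3$ would need $\dim\g(h;2)\ge 3$ together with an equality that the tables show never occurs except in the three known cases. In $\GR{G}{2}$ there are only four non-zero orbits and the verification is immediate; $\GR{F}{4}$ has $15$, $\GR{E}{6}$ has $20$, $\GR{E}{7}$ has $44$, $\GR{E}{8}$ has $69$, so the check is finite but not short for the larger algebras.

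\textbf{The main obstacle} is precisely the size of the case analysis in $\GR{E}{7}$ and $\GR{E}{8}$: without a structural shortcut one must confront dozens of orbits, and the data for $\dim\g(h;2)$ versus $\rk G_e^{red}$ has to be extracted carefully from the Bala--Carter / Dynkin tables (signs of which centralisers are semisimple versus merely reductive matter). A useful labour-saving device — which I would try to make rigorous rather than leave to brute force — is the remark following Proposition~\ref{prop:extrim}: extreme is equivalent to the roots of $\g(h;2)$ being linearly independent, i.e.\ $\g(h;2)$ carrying an open $T$-orbit. Since $[\g(h;0),e]=\g(h;2)$ with $e$ a single vector, $\dim\g(h;2)\le \dim\g(h;0)$ is automatic, and linear independence of the weight-$2$ roots is a very restrictive combinatorial condition on the wDd that one can often rule out after observing two weight-$2$ roots summing to a weight-$4$ root also in $\Delta$. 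Organising the type-by-type check around this linear-independence criterion, together with the even/odd dichotomy of Step~2, should make the exceptional cases tractable and is what I would write out in detail.
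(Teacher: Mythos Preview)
Your proposal has a genuine gap: you have misread the hypothesis. ``$\theta$ is fundamental'' rules out only $\GR{A}{n}$ and $\GR{C}{n}$, so the theorem covers the \emph{infinite families} $\GR{B}{n}$ ($n\ge 3$) and $\GR{D}{n}$ ($n\ge 4$) in addition to the five exceptional types. Your entire strategy is predicated on ``$\g$ is one of $\GR{G}{2},\GR{F}{4},\GR{E}{6},\GR{E}{7},\GR{E}{8}$, so there are only finitely many nilpotent orbits'', and the type-by-type table check you outline simply does not apply to the orthogonal series. The paper handles $\GR{B}{n}$ and $\GR{D}{n}$ by invoking Theorem~\ref{thm:extrem-so}, whose proof is a Clebsch--Gordan analysis of the $\langle e,h,f\rangle$-module structure of $\son$ (separating odd and even Jordan blocks), not a table lookup. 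You need either to supply that argument or to cite it.

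For the exceptional types your approach is essentially the same as the paper's: consult tabulated data for $\dim\g(h;2)$ and $G_e^{red}$ and verify \eqref{eq:extrim} orbit by orbit. (The paper cites Elashvili's tables~\cite{ela76} rather than~\cite{cm} for this.) One correction to your Step~2: for an even orbit it is $\dim\g(h;{\ge}2)$, not $\dim\g(h;2)$, that equals $\tfrac12\dim\co$; the space $\g(h;2)$ is in general much smaller (e.g.\ $\dim\g(h;2)=\rk G$ for $\co_{pr}$, far from $\tfrac12\dim\co_{pr}$), so your proposed shortcut for even orbits does not work as stated. The linear-independence-of-roots reformulation you mention at the end is sound and could indeed streamline the exceptional check, but it does not help with the missing orthogonal cases.
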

\begin{proof}   For the series $\GR{B}{n}$ ($n\ge 3$) and $\GR{D}{n}$ ($n\ge 4$), this follows from Theorem~\ref{thm:extrem-so}. 
For the exceptional simple Lie algebras, one consults Elashvili's tables in~\cite{ela76}. Those tables include 
all the required information: the dimension of $\g(h;2)$ and the description of $G_e^{red}$ for all nilpotent 
orbits. 
\end{proof}

\begin{table}[htb]  
\begin{center}
\begin{tabular}{c|ccccccc}   
Algebra &  $\co_{imd}$ & {\sf wDd} of $\co_{imd}$ & $\dim\co_{imd}$ & $d(h;1)$ & $d(h;2)$ & 
$\rk G_e$ & $d_{\sf Dy}$  \\ \hline
$\GR{G}{2}$ & $\GRt{A}{1}$\rule{0ex}{2.5ex} & 1$\Lleftarrow$0 & 8 & 2 & 1 & 1 & 3  \\
$\GR{F}{4}$ & $\GR{C}{3}$ & 2-1$\Leftarrow$0-1 & 42 & 4 & 3 & 1 & 19 \\
$\GR{E}{6}$ & $\GR{A}{5}$ & \begin{E6}{2}{1}{0}{1}{2}{1}\end{E6} 
 & 64 & 6 & 5 & 1 & 29 \\
$\GR{E}{7}$ & $\GR{D}{6}$ & \begin{E7}{2}{2}{1}{0}{1}{2}{1}\end{E7}
& 118 & 6 & 6 & 1 & 56  \\
$\GR{E}{8}$ & $\GR{E}{7}$ & \begin{E8}{2}{2}{2}{1}{0}{1}{2}{1}\end{E8}
& 232 & 6 & 7 & 1  & 113 \\  \hline
\end{tabular} \vskip1ex
\caption{The intermediate orbits in the exceptional Lie algebras}
\label{table:except}
\end{center}
\end{table}
\noindent
In Table~\ref{table:except}, $d(h;i)=\dim\g(h{;}i)$, $i=1,2$, and $d_{\sf Dy}=d_{\sf Dy}(\co_{imd})$.
The second column contains the usual notation for nilpotent orbits in the 
exceptional Lie algebras (see~\cite[8.4]{cm}) and the third column shows the 
{\it weighted Dynkin diagram} (={\sf wDd}) of $\co_{imd}$.

\section{Lonely orbits}
\label{sect:lonely}

\begin{df} A nilpotent orbit $\co$ is said to be {\it lonely}, if $\#(\cI(\be)_{\co})=1$. In other words, 
the Dynkin ideal  is the only element of $\cI(\be)_{\co}$. 
\end{df}
\noindent 
Clearly, a lonely orbit is extreme. Therefore, to classify the lonely orbits, one has to explore the explicit list of extreme orbits from Section~\ref{sect:classification}.

\begin{ex}
The principal nilpotent orbit $\co_{pr}$ is always lonely, since $\ut$ is the only element of 
$\cI(\be)_{\co_{pr}}$.
\end{ex}

\begin{lm}     \label{lm:O-min-neodinoka}
If there is a {\bfseries long} root $\ap\in\Pi$ such that $(\ap,\theta)\ne 0$ and $\ap\ne\theta$, then 
$\co_{min}$ is not lonely.
\end{lm}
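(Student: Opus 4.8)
The plan is to show that the Dynkin ideal $\ce_{\sf Dy}(\co_{min}) = \g(h;{\ge}2)$ is not the only element of $\cI(\be)_{\co_{min}}$ by exhibiting a second, genuinely different $\be$-ideal associated with $\co_{min}$. For the minimal orbit, $h$ is the coroot of $\theta$ (up to the usual normalisation), so $\g(h;2) = \g^\theta$ is one-dimensional and $\g(h;{\ge}2) = \g^\theta$; thus $\ce_{\sf Dy}(\co_{min})$ is the single root line $\g^\theta$. So I need to produce another $\be$-ideal $\ce$, consisting of a single long root space $\g^\mu$, with $\mu \ne \theta$, and such that $G{\cdot}\g^\mu = \ov{\co_{min}}$. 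The latter condition is automatic: for any long root $\mu$, the line $\g^\mu$ lies in $\ov{\co_{min}}$ because all long root vectors lie in (the closure of) the minimal orbit, and a nonzero nilpotent line generates exactly $\ov{\co_{min}}$.

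First I would use the hypothesis: there is a long simple root $\ap \in \Pi$ with $(\ap,\theta) \ne 0$ and $\ap \ne \theta$. The goal is to find a long root $\mu$ such that $\g^\mu$ is already a $\be$-ideal, i.e. $\{\mu\}$ is an upper ideal of $(\Delta^+, \curle)$ — equivalently, $\mu + \gamma \notin \Delta$ for every $\gamma \in \Delta^+$, i.e. $\mu$ is a maximal root in its position. The natural candidate is $\mu = \theta - \ap$: since $(\ap,\theta) > 0$ (both being long, with $\theta$ dominant), $\theta - \ap$ is a root, and it is long because $\theta$ is long and the reflection $s_\ap$ preserves root lengths while $\langle\theta,\ap^\vee\rangle = 1$ forces $s_\ap(\theta) = \theta - \ap$. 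Then I would check that $\g^{\theta-\ap}$ is $\be$-stable: if $\theta - \ap + \beta \in \Delta^+$ for some $\beta \in \Delta^+$, it could only be $\theta$ (since $\theta$ is the unique maximal root and $\theta - \ap \curle \theta$ with difference $\ap$ simple), forcing $\beta = \ap$; but we must rule out $\beta = \ap$ giving a root, i.e. show $\langle \theta - \ap, \ap^\vee\rangle \le 0$ so that $(\theta-\ap)+\ap$ is not obtained by the root-string rule — actually $(\theta-\ap)+\ap = \theta$ IS a root, so $\g^{\theta - \ap}$ is generally NOT a $\be$-ideal. This is the main obstacle: the obvious single-root-space candidate fails.

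So the real plan is different: I would instead take the two-dimensional $\be$-ideal $\ce = \g^\theta \oplus \g^{\theta - \ap}$ and show it is associated with $\co_{min}$. It is $\be$-stable by the computation just sketched (adding any positive root to $\theta$ or $\theta-\ap$ lands outside $\Delta^+$ or inside $\{\theta, \theta-\ap\}$). It is nilpotent and two-dimensional, so $\dim G{\cdot}\ce \le 2\dim\ce = 4$ by the Corollary after Theorem~\ref{thm:main-half}; on the other hand $\ce \supset \g^\theta$, a line in $\ov{\co_{min}}\setminus\{0\}$, so $G{\cdot}\ce \supseteq \ov{\co_{min}}$, giving $\dim G{\cdot}\ce \ge \dim\co_{min}$. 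Since $\dim\co_{min} \ge 4$ whenever $\g$ is not of type $\GR{A}{1}$ or $\GR{C}{n}$ with the hypothesis of the lemma in force (one checks $\dim\co_{min} = 2h^\vee - 2 \ge 4$ outside trivial cases), and since $\ce \not\subset \fN$ would be impossible, the only consistent possibility — using that all nilpotent lines generate $\ov{\co_{min}}$ and that the next orbit up already has dimension $> 4$ in the relevant cases, or more cleanly by showing $\ce$ contains no element outside $\ov{\co_{min}}$ — forces $G{\cdot}\ce = \ov{\co_{min}}$. Hence $\ce \in \cI(\be)_{\co_{min}}$, and $\ce \ne \g^\theta = \ce_{\sf Dy}(\co_{min})$, so $\co_{min}$ is not lonely.

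The step I expect to be the main obstacle is pinning down that $\dim G{\cdot}\ce$ is exactly $\dim\co_{min}$ rather than the dimension of a larger orbit: I would argue that every element of $\ce = \g^\theta \oplus \g^{\theta-\ap}$ is a multiple of a long root vector (since $\theta$ and $\theta - \ap$ are long roots and $(\theta, \theta - \ap) > 0$, any nonzero combination $x e_\theta + y e_{\theta-\ap}$ is conjugate under the root $SL_2$ for $\ap$, or under $T$, to a long root vector — more directly, $[\g^\theta, \g^{\theta-\ap}] = 0$ as $2\theta - \ap \notin \Delta$, so $\ce$ is abelian and every nonzero element is $\ad$-nilpotent of minimal type, hence in $\co_{min}$). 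Therefore $\ce \subset \ov{\co_{min}}$ entirely, $G{\cdot}\ce = \ov{\co_{min}}$, and the proof is complete. I would also remark that the hypothesis ``$\ap$ long, $(\ap,\theta)\ne 0$, $\ap\ne\theta$'' is exactly what guarantees $\theta - \ap$ is a long root distinct from $\theta$, which is what makes $\ce$ abelian and keeps $G{\cdot}\ce$ from jumping to a bigger orbit.
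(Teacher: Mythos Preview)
Your approach is exactly the paper's: exhibit the two-dimensional $\be$-ideal $\ce=\g^\theta\oplus\g^{\theta-\ap}$ and show it lies in $\cI(\be)_{\co_{min}}$. The paper states only this conclusion; you supply the details.

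One genuine slip to fix: the Corollary after Theorem~\ref{thm:main-half} reads $\dim G{\cdot}V \ge 2\dim V$, not $\le$, so your dimension-squeeze in the middle paragraph is pointing the wrong way and cannot force $G{\cdot}\ce=\ov{\co_{min}}$. You do recover by passing to the direct argument in the last paragraph, and that argument is correct and is what you should keep: since $\ap$ and $\theta$ are both long and distinct with $(\ap,\theta)>0$, one has $\langle\theta,\ap^\vee\rangle=1$, hence $\theta-2\ap\notin\Delta$ and $\g^\theta\oplus\g^{\theta-\ap}$ is the standard two-dimensional module for the $\tri$-subalgebra $\langle e_\ap,h_\ap,f_\ap\rangle$. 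The corresponding $SL_2\subset G$ acts transitively on the nonzero vectors of this module, so every nonzero element of $\ce$ is $G$-conjugate to $e_\theta\in\co_{min}$; thus $\ce\subset\ov{\co_{min}}$ and $G{\cdot}\ce=\ov{\co_{min}}$. (Your remark that ``$\ap$ long'' is exactly what keeps $\theta-\ap$ long, via $s_\ap(\theta)=\theta-\ap$, is the right explanation of the hypothesis; when $\ap$ is short one gets short-root vectors in $\ce$ and $G{\cdot}\ce$ jumps to a larger orbit, as in $\spn$.) Drop the false start with the single root space $\g^{\theta-\ap}$ and the misdirected dimension argument, and the proof is clean.
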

\begin{proof}
Clearly, the root space $\g^\theta $ is the only minimal element of $\cI(\be)_{\co_{min}}$.
Moreover, if $\ap$ is long and $\ap\ne\theta$, then the $2$-dimensional $\be$-ideal 
$\langle \g^\theta, \g^{\theta-\ap}\rangle$ also belongs to  $\cI(\be)_{\co_{min}}$.
\end{proof}
\noindent
This lemma applies to the minimal orbits in all simple Lie algebras except $\spn$ ($n\ge 1$). Note that
$\mathfrak{sp}_2=\tri$. 
\begin{lm}    \label{lm:non-even-Rich}
If $\co\subset\fN$ is Richardson, but not even, then it is not lonely.
\end{lm}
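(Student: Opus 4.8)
The plan is to exploit the characterisation already recorded in the excerpt: $\co$ is even if and only if $d_{\sf Dy}(\co)=\frac{1}{2}\dim\co$, which in turn (by Theorem~\ref{thm:main-half}) is exactly the condition $d_{\sf max}(\co)=\frac{1}{2}\dim\co$. So a Richardson orbit that fails to be even has $d_{\sf Dy}(\co)<\frac12\dim\co=d_{\sf max}(\co)$, and since a standard polarisation $\p$ gives $\p^{nil}\in\cI(\be)_\co$ of dimension $\frac12\dim\co$, we immediately have two distinct elements of $\cI(\be)_\co$, namely $\ce_{\sf Dy}(\co)$ and $\p^{nil}$. Hence $\#\cI(\be)_\co\ge 2$ and $\co$ is not lonely. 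That is the whole argument in one line, so the real content of writing the proof is just to assemble these cited facts cleanly.

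First I would fix an adapted $\tri$-triple $\{e,h,f\}$ with $e\in\co$ and recall from Section~\ref{subs:sl2} that $\ce_{\sf Dy}(\co)=\g(h;{\ge}2)\in\cI(\be)_\co$, with $\dim\ce_{\sf Dy}(\co)=\dim\g(h;{\ge}2)=\frac12(\dim\co-\dim\g(h;1))$; thus $d_{\sf Dy}(\co)=\frac12\dim\co$ precisely when $\g(h;1)=0$, i.e.\ when $\co$ is even. Next, since $\co$ is assumed Richardson, Section~\ref{subs:induced} provides a standard polarisation $\p\supset\be$ with $\p^{nil}\subset\ut$, $P{\cdot}e$ dense in $\p^{nil}$, hence $\p^{nil}\in\cI(\be)_\co$, and $\dim\p^{nil}=\frac12\dim\co$ by Eq.~\eqref{eq:dim-induced}. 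Because $\co$ is \emph{not} even, $\dim\g(h;1)>0$, so $\dim\ce_{\sf Dy}(\co)<\frac12\dim\co=\dim\p^{nil}$; in particular $\ce_{\sf Dy}(\co)\ne\p^{nil}$. Therefore $\cI(\be)_\co$ contains at least two distinct $\be$-ideals, so $\#\cI(\be)_\co\ge 2$ and $\co$ is not lonely.

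Honestly there is no real obstacle here: every ingredient (that $\g(h;{\ge}2)$ and $\p^{nil}$ lie in $\cI(\be)_\co$, the dimension formulas, the even$\,\Leftrightarrow\,d_{\sf Dy}=\frac12\dim\co$ equivalence) is stated verbatim earlier in the excerpt, and the only "step" is to observe that a strict dimension inequality forces two different ideals. The one thing worth a sentence of care is pointing out why $\p$ may be taken standard so that $\p^{nil}$ is genuinely a $B$-stable subspace of $\ut$ — this is the remark in Section~\ref{subs:induced} that the induced orbit depends only on the conjugacy class of the Levi, so replacing $\p$ by a $G$-conjugate containing $\be$ changes nothing. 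With that in hand the proof is complete.

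\begin{proof*}
Let $\{e,h,f\}$ be an adapted $\tri$-triple with $e\in\co$. By~\ref{subs:sl2}, $\ce_{\sf Dy}(\co)=\g(h;{\ge}2)\in\cI(\be)_\co$, and
\[
   \dim\ce_{\sf Dy}(\co)=\dim\g(h;{\ge}2)=\tfrac12\bigl(\dim\co-\dim\g(h;1)\bigr).
\]
Hence $d_{\sf Dy}(\co)=\tfrac12\dim\co$ if and only if $\g(h;1)=0$, i.e. if and only if $\co$ is even. On the other hand, since $\co$ is Richardson, it has a standard polarisation $\p\supset\be$ (see~\ref{subs:induced}; we may take $\p$ standard because the induced orbit depends only on the conjugacy class of the Levi subalgebra). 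Then $\p^{nil}\subset\ut$, so $\p^{nil}\in\cI(\be)_\co$, and $\dim\p^{nil}=\tfrac12\dim\co$ by~\eqref{eq:dim-induced}. As $\co$ is not even, $\dim\g(h;1)>0$, and therefore
\[
   \dim\ce_{\sf Dy}(\co)<\tfrac12\dim\co=\dim\p^{nil}.
\]
In particular $\ce_{\sf Dy}(\co)\ne\p^{nil}$, so $\cI(\be)_\co$ contains at least two distinct $\be$-ideals. Thus $\#\bigl(\cI(\be)_\co\bigr)\ge 2$, and $\co$ is not lonely.
\end{proof*}
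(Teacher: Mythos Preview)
Your proof is correct and follows essentially the same argument as the paper: both compute $d_{\sf max}(\co)-d_{\sf Dy}(\co)=\tfrac12\dim\g(h;1)>0$ to conclude that the Dynkin ideal and the nilradical of a standard polarisation are distinct elements of $\cI(\be)_\co$. The paper's version is simply a terser one-line statement of the same inequality.
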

\begin{proof}
Since $\co\subset\fN$ is Richardson, $d_{\sf max}(\co)=\frac{1}{2}\dim\co$. If $\co$ is not even, then
$\g(h;1)\ne 0$  and hence $d_{\sf max}(\co)-d_{\sf Dy}(\co)=\frac{1}{2}\dim\g(h;1)>0$.
\end{proof}

Lemma~\ref{lm:non-even-Rich} applies to all non-even orbits in $\sln$,  but a stronger assertion is true!
\begin{prop}   \label{pr:odinokie-sl}
In $\g=\sln$, the only nonzero lonely orbit is $\co_{pr}$. 
\end{prop}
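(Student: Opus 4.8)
The plan is to show that every orbit $\co\ne\{0\}$ other than $\co_{pr}$ in $\sln$ fails to be lonely, by exhibiting a second $\be$-ideal in $\cI(\be)_\co$ distinct from the Dynkin ideal. Since all nilpotent orbits in $\sln$ are Richardson (see \cite[7.2]{cm}), Lemma~\ref{lm:non-even-Rich} already disposes of every non-even orbit, so the remaining task is to handle the \emph{even} Richardson orbits $\co\ne\co_{pr}$. For such an orbit the Dynkin ideal $\g(h;{\ge}2)$ equals $\ph^{nil}$, the nilradical of the Dynkin parabolic $\ph=\g(h;{\ge}0)$, and it has the maximal possible dimension $\frac12\dim\co$; so to witness non-loneliness I must produce a \emph{different} $\be$-ideal of the same dimension, i.e.\ by Theorem~\ref{thm:main-half}(ii) a different standard polarisation of $\co$, \emph{or} a $\be$-ideal of strictly smaller dimension. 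It is the existence of a second polarisation that I will aim for.

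The key step is a partition-combinatorial argument. Write $\co=\co(\blb)$ with $\blb$ the even orbit's partition and $\hat\blb=(c_1,\dots,c_m)$ its dual. Recall that the standard polarisations of $\co(\blb)$ in $\mathfrak{gl}_n$ correspond exactly to the orderings (compositions) of the parts of $\hat\blb$: a composition $(c_{\sigma(1)},\dots,c_{\sigma(m)})$ gives the block-upper-triangular parabolic whose nilradical is a polarisation of $\co(\blb)$ (this is Kempken's description of induction from the zero orbit, \cite{ke83}, see also \cite[Ch.~7]{cm}). Two compositions give the \emph{same} $\be$-ideal only when they agree up to permuting equal consecutive entries; hence $\co(\blb)$ has a unique standard polarisation if and only if all the $c_j$ are equal, i.e.\ $\hat\blb=(c^m)$ is rectangular. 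So I must check: among even orbits, which have rectangular dual partition? If $\hat\blb=(c^m)$ then $\blb=(m^c)$ is also rectangular, and the orbit $\co(m^c)$ is even precisely when $\blb=(m^c)$ has all parts of the same parity and the Dynkin $h$ has even eigenvalues — a direct check with the Jordan/$\mathfrak{sl}_2$-module decomposition $\gln=\bigoplus_{i,j}\eus R(m{-}1)\otimes\eus R(m{-}1)$ (here all $\lb_i=m$, $t=c$) shows the eigenvalues of $\ad h$ on $\g$ are all even iff $m$ is odd, while for $m$ even one still gets evenness — so I need to pin this down, but in any case the rectangular even orbits form a short explicit list. For each such $\co(m^c)$ with $m\ge 2$ and $c\ge 2$, the partition is not $(n)$, and I claim it is still not lonely: although it has a unique polarisation, it has a $\be$-ideal of \emph{smaller} dimension in its class. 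Concretely, one can take the $\tri$-triple of a \emph{sub}-principal configuration or use Lemma~\ref{lm:induced-simple-roots}: $\co(m^c)$ is induced from a smaller Levi, and Lemma~\ref{lm:induction-poset} combined with the existence of a non-maximal minimal $\be$-ideal (e.g.\ $d_{\sf min}(\co)<\frac12\dim\co$ unless $\co=\co_{pr}$, which follows from Eq.~\eqref{eq:d-min} since $\rk G_e>0$ for $\co\ne\co_{pr}$ in $\sln$) yields a strictly smaller $\be$-ideal, hence a second element of $\cI(\be)_\co$.

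So the argument has two branches — non-rectangular dual $\Rightarrow$ two polarisations; rectangular dual $\Rightarrow$ $d_{\sf min}(\co)<d_{\sf max}(\co)$ forces an extra ideal — and in both cases $\#\cI(\be)_\co\ge 2$ unless $\co=\co_{pr}$, for which $\cI(\be)_{\co_{pr}}=\{\ut\}$. I expect the main obstacle to be the rectangular case: showing cleanly that $d_{\sf min}(\co)$ is \emph{strictly} below $\frac12\dim\co=d_{\sf Dy}(\co)$ for these even orbits (so that a genuinely smaller $\be$-ideal exists) rather than merely quoting Eq.~\eqref{eq:d-min}; the honest way is to note $e\ne 0$ implies $G_e$ is not unipotent in $\sln$, so $\rk G_e\ge 1$, whence by Eq.~\eqref{eq:d-min} $d_{\sf min}(\co)=\dim B-\dim B(G_e)<\dim B$... — one then has to relate this to $\frac12\dim\co$ and check the inequality is strict for $\co\ne\co_{pr}$, which amounts to observing that an even orbit in $\sln$ has $\g(h;2)$ with linearly dependent roots as soon as $c\ge 2$ (so it is not extreme), forcing $d_{\sf min}<d_{\sf Dy}$. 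This parity/linear-independence bookkeeping for the rectangular partitions is the one place where a small explicit computation is unavoidable.
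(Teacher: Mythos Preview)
Your plan is correct and rests on the same two core facts as the paper --- a lonely orbit must have a unique standard polarisation (forcing $\hat\blb$, hence $\blb$, to be rectangular) and a lonely orbit must be extreme --- but you arrive at the conclusion by a longer path. The paper's proof is three lines: lonely implies both rectangular and extreme, and a glance at Theorem~\ref{thm:extrem-sl-sp}(i) shows that the only rectangular partitions appearing in the list of extreme $\sln$-orbits are $(1^n)$ and $(n)$. You instead first split into even versus non-even (invoking Lemma~\ref{lm:non-even-Rich}), then split the even case into rectangular versus non-rectangular dual, and finally argue directly that $\co(m^c)$ with $m,c\ge 2$ is not extreme. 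All of this works, but the even/non-even detour is redundant (lonely $\Rightarrow$ extreme already covers everything), and your ``main obstacle'' --- the explicit non-extremeness check for rectangular partitions --- is subsumed in the already-proved Theorem~\ref{thm:extrem-sl-sp}(i); the computation you sketch (which amounts to $\dim\g(h;2)+\rk G_e = c^2(m{-}1)+c = mc$ iff $(c{-}1)(m{-}1)=0$) just re-derives a special case of that theorem. Your digression on whether $\co(m^c)$ is even is also moot: it always is, but the paper's argument never needs to ask.
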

\begin{proof}
Here all nonzero orbits $\co(\blb)$ are Richardson. If $\p$ is a standard polarisation of $\co(\blb)$, then 
$\p^{nil}\in \cI(\be)_{\co(\blb)}$. Therefore, if $\co(\blb)$ is lonely, then it has a {\bf unique} standard 
polarisation. This means that all the diagonal blocks of $\p$ must be equal, i.e., $\blb$ is rectangular.
However, $\co(\blb)$ must also be extreme. Comparing with Theorem~\ref{thm:extrem-sl-sp}(i), we get only 
two possibilities:  either $\blb$ is a single row (i.e., $\co(\blb)=\{0\}$) or  $\blb$ is a single column (i.e., 
$\co(\blb)=\co_{pr}$).
\end{proof}

\begin{ex} For types $\GR{B}{n}$ ($n\ge 3$) and $\GR{F}{4}$, the intermediate orbits 
are Richardson  and not even. Hence they are not lonely. In both cases, the semisimple part of $\el$ is of type
$\GR{A}{2}$. The {\sf wDd} of $\co_{imd}(\GR{F}{4})$ is given in Table~\ref{table:except}, and the {\sf wDd} of
$\co_{imd}(\GR{B}{n})$ is $(\underbrace{2\dots 2}_{n-3}10{\Rightarrow}1)$. \\
Thus, for $\GR{A}{n}$ ($n\ge 1$), $\GR{B}{n}$ ($n\ge 3$), 
and $\GR{F}{4}$, the only lonely orbit is $\co_{pr}$.
\end{ex}

\subsection{Lonely orbits in the symplectic Lie algebra.}
We use the embedding $\spn\subset\sltn$ such that
\beq    \label{eq:sympl-embed}
\spn=\left\{\begin{pmatrix} \eus A & \eus B \\ \eus C & -\eus{\hat A} \end{pmatrix} \mid \eus A,\eus B,\eus C\in\gln,
\quad \eus B=\hat{\eus B}, \eus C=\hat{\eus C}\right\} ,
\eeq 
where $\eus A\mapsto\hat{\eus A}$ is the transpose with respect to the antidiagonal. Then our fixed Borel
subalgebra $\be(\spn)$ is the set of symplectic upper-triangular matrices. 
For the roots of $\spn$, we use the standard notation in which 
$\Pi=\{\ap_1=\esi_1-\esi_2,\dots,\ap_{n-1}=\esi_{n-1}-\esi_n, \ap_n=2\esi_n\}$.

Let  $\co_m(n)$ denote the extreme orbit in $\spn$ that corresponds to $\blb=(2m,1^{2n-2m})$, 
$1\le m\le n$.  In particular, $\co_1(n)=\co_{min}$ and $\co_n(n)=\co_{pr}$. 
Our goal is to prove that all these orbits are lonely. Before starting the proof, we gather some
relevant data.

{\sf\bfseries 1}$^o$. For $m< n$, the {\sf wDd} of $\co_m(n)$ is 
$(\underbrace{2\dots 2}_{m-1}1\underbrace{0\dots 0{\Leftarrow}0}_{n-m})$.
For $m=n$, we have $\co_n(n)=\co_{pr}$ and hence {\sf wDd} is $(2\dots 2{\Leftarrow}2)$. The orbit
$\co_m(n)$ consists of (symplectic nilpotent) $2n\times 2n$ matrices of rank  $2m-1$.

{\sf\bfseries 2}$^o$. Using the above weighted Dynkin diagram, one readily determines the $\BZ$-grading associated with  
$\{e,h,f\}$, where $e\in\co_m(n)$. For instance, $\dim\g(h;1)=2n-2m$, $\dim\g(h;2)=m$, and 
$d_{\sf Dy}(\co_m(n))=\dim\g(h;{\ge}2)=(m-1)(2n-m+1)+1$.

{\sf\bfseries 3}$^o$. The orbit $\co_m(n)$ is rigid if and only if $m=1$, and for $m\ge 2$, $\co_m(n)$ is 
induced from
$(\GR{C}{n-1}, \co_{m-1}(n{-}1))$. Note that $\spn$ contains a unique standard Levi subalgebra of (semisimple) type
$\GR{C}{k}$ with $k<n$, hence the above induction is well defined. Expanding the chain of induction, 
we see that $\co_m(n)$ is induced from the rigid orbit $\co_{min}=\co_{1}(n-m+1)$ in the unique Levi 
subalgebra $\el\subset\spn$ such that  $[\el,\el]=\mathfrak{sp}_{2(n-m+1)}$. Actually, the pair
$(\GR{C}{n-m+1}, \co_{1}(n-m+1))$ represents the {\bf only} possibility to obtain $\co_m(n)$ via induction 
from a rigid orbit in a Levi subalgebra of $\spn$.


\begin{lm}
The extreme orbit $\co_1(n)$ is lonely. 
\end{lm}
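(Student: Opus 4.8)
The plan is to determine $\ov{\co_1(n)}$ explicitly and then show there is no room for a $\be$-ideal associated with it other than $\g^\theta$. Recall that $\co_1(n)=\co_{min}$ corresponds to $\blb=(2,1^{2n-2})$, so $\ov{\co_{min}}$ is the set of elements of $\spn\simeq\spv$ of rank at most $1$ (as operators on $V$, $\dim V=2n$, with symplectic form $\omega$). The first step --- and the one responsible for the symplectic minimal orbit being lonely, unlike the minimal orbits in the other simple types, cf.\ Lemma~\ref{lm:O-min-neodinoka} --- is to observe that such an element is necessarily \emph{symmetric}: if $\rank x\le 1$, write $x(w)=\langle\phi,w\rangle\, u$ with $u\in V$, $\phi\in V^*$; the relation $x\in\spv$ says exactly that the bilinear form $(w_1,w_2)\mapsto\langle\phi,w_1\rangle\,\omega(u,w_2)$ is symmetric, which forces $\phi$ to be proportional to $\omega(u,\cdot)$, so $x=x_v$ where $x_v(w)=\omega(v,w)\,v$ for a suitable $v\in V$. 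Hence $\ov{\co_{min}}=\{x_v\mid v\in V\}$ is the affine cone over the quadratic Veronese of $V$.

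Next, let $\ce\in\cI(\be)_{\co_{min}}$. Then $\ce\ne 0$ (since $G{\cdot}\{0\}=\{0\}\ne\ov{\co_{min}}$), $\ce$ is $T$-stable, and $\ce\subset\ut\cap\ov{\co_{min}}$. I claim $\dim\ce=1$: if $x,x'\in\ce$ were linearly independent, then by the first step $x=x_v$, $x'=x_{v'}$ with $v,v'$ linearly independent in $V$, whence $x+x'\in\ce\subset\ov{\co_{min}}$ has image $\langle v,v'\rangle$, of dimension $2$ --- contradicting $\rank(x+x')\le 1$. So $\ce=\g^\gamma$ for a single root $\gamma\in\Delta^+$, and since $\ce$ is a $\be$-ideal, $\{\gamma\}=\Delta(\ce)$ must be an upper ideal of $(\Delta^+,\curle)$, forcing $\gamma$ to be the unique maximal root $\theta$. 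As $\ce_{\sf Dy}(\co_{min})=\g(h;{\ge}2)$ is itself a $\be$-ideal of dimension $d_{\sf Dy}(\co_1(n))=1$, it too equals $\g^\theta$; therefore $\cI(\be)_{\co_{min}}=\{\g^\theta\}$, i.e.\ $\co_1(n)$ is lonely.

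The only genuine obstacle is the first step: identifying $\ov{\co_{min}}$ with the rank-$\le 1$ locus of $\spn$ and classifying its points as the $x_v$'s (equivalently, recognising $\ov{\co_{min}}$ as a Veronese cone). Everything afterwards is formal --- the fact that a sum of two linearly independent rank-one operators has rank two, and the uniqueness of the maximal root in a simple root system. One could alternatively proceed by a root-theoretic enumeration, listing the positive roots $\gamma$ with $\g^\gamma\subset\co_{min}$ (these are the long roots $2\esi_i$) and combining this with the upper-ideal condition; but the rank argument is cleaner and makes transparent why the conclusion fails for $\sln$, $\son$, etc., where $\ov{\co_{min}}$ contains large linear subspaces.
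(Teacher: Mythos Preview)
Your proof is correct and rests on the same core observation as the paper's: elements of $\co_1(n)$ have rank~$1$, and any strictly larger $\be$-ideal already contains rank-$2$ matrices. The paper's argument is terser and more root-theoretic: since $\ap_1$ is the unique simple root not orthogonal to $\theta$, there is a \emph{unique} $2$-dimensional $\be$-ideal, namely $\langle\g^\theta,\g^{\theta-\ap_1}\rangle$, and in the explicit matrix model~\eqref{eq:sympl-embed} one sees at once that it contains rank-$2$ matrices. Your route through the parametrisation $v\mapsto x_v$ proves a bit more --- that $\ov{\co_{min}}$ contains no $2$-dimensional linear subspace at all, $B$-stable or not --- and gives a conceptual reason for the symplectic exception: the map $v\mapsto x_v$ realises $\ov{\co_{min}}$ as the affine cone over the quadratic Veronese of $V$, which indeed contains no lines of the projective space $\mathbb{P}(\eus S^2 V)$. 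Either way the argument is short; the paper's version trades your extra generality for brevity, using the matrix realisation already set up for the subsequent induction in Theorem~\ref{thm:lonely-sp}.
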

\begin{proof}
For $\spn$,  $\ap_1=\esi_1-\esi_2$ is the only simple root that is not orthogonal to $\theta=2\esi_1$, 
and $\ap_1$ is {\bf short}. Since $\co_1(n)$ consists of matrices of rank $1$ and
the unique 2-dimensional $\be$-ideal $\langle \g^\theta, \g^{\theta-\ap_1}\rangle$ contains already
matrices of rank $2$, the root space $\g^\theta$ is the only $\be$-ideal associated with $\co_1(n)$.
\end{proof}

Since $\co_n(n)$ is always lonely, this implies that all extreme orbits for $n=1,2$ are lonely.

\begin{thm}    \label{thm:lonely-sp}
The extreme orbits $\co_m(n)$ are lonely for all $m\le n$.
\end{thm}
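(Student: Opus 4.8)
The strategy is to induct on $m$, using the fact (item \textsf{\bfseries 3}$^o$) that $\co_m(n)$ is induced from $(\GR{C}{n-m+1},\co_{\min})$ via a chain of single-step inductions $\co_m(n)=\Ind_\el^\g(\co_{m-1}(n-1))$ with $[\el,\el]=\mathfrak{sp}_{2(n-1)}$, combined with a direct analysis of which $\be$-ideals can be associated with $\co_m(n)$. The base cases $m=1$ and $m=n$ are already settled, so assume $2\le m<n$ and that $\co_{m-1}(n-1)$ is lonely in $\mathfrak{sp}_{2(n-1)}$.

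\textbf{Step 1: the rank condition.} First I would record the numerical shadow of being lonely: by Proposition~\ref{prop:extrim} the Dynkin ideal $\ce_{\sf Dy}(\co_m(n))=\g(h;{\ge}2)$ is the unique $\be$-ideal of minimal dimension, and by item \textsf{\bfseries 2}$^o$ its dimension is $(m-1)(2n-m+1)+1$. Any $\ce\in\cI(\be)_{\co_m(n)}$ is a nilpotent $B$-stable subspace of $\ut$ all of whose elements are matrices of rank $\le 2m-1$ (item \textsf{\bfseries 1}$^o$), and conversely $\ce$ must meet $\co_m(n)$ in a dense subset, so it contains an element of rank exactly $2m-1$. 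The key structural fact I want is that this forces $\ce$ to contain $\g^\theta=\langle e_{2\esi_1}\rangle$: any $\be$-ideal of $\ut(\spn)$ whose generic element has rank $2m-1\ge 3$ must contain the ``top'' root space, because the highest root $\theta=2\esi_1$ is $\curge\gamma$ for every positive root $\gamma$, hence $\g^\theta\subset\ce$ as soon as $\ce\neq 0$. Similarly, since $\ce$ contains no simple root space $\g^{\ap_i}$ for $i\le n-1$ (else by Lemma~\ref{lm:induced-simple-roots} $\co_m(n)$ would be induced from a Levi of type $\GR{A}{}{\times}\GR{C}{}$, contradicting the uniqueness statement in item \textsf{\bfseries 3}$^o$ that the only rigid-orbit induction data is $(\GR{C}{n-m+1},\co_{\min})$), the set $\Delta(\ce)$ is constrained: it lies in the upper ideal generated by $\theta$ together with roots of the form $\esi_1{+}\esi_j$ and $2\esi_j$.

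\textbf{Step 2: reduction to the induced situation.} Now I would exploit that $\g^\theta\subset\ce$. Passing to the centraliser picture: $\langle e_\theta,h_\theta,f_\theta\rangle$ is a principal $\tri$ inside the $\GR{C}{1}$ at the corner, and its orthogonal complement inside $\spn$ is a Levi $\el'$ with $[\el',\el']=\mathfrak{sp}_{2(n-1)}$ (the standard Levi dropping $\ap_n$ and the pairing with $\esi_1$). Because $\g^\theta\subset\ce$ and $\ce$ is $\be$-stable, I claim $\ce=\g^\theta\oplus(\text{the }\esi_1\text{-linear part})\oplus\ce''$ where $\ce''$ is a $\be_{\el'}$-ideal of $\mathfrak{sp}_{2(n-1)}$, and that the $\esi_1$-linear part is forced (it is exactly the Heisenberg layer $\g(h_\theta;1)$ intersected with $\ut$, which is $\be$-generated over $\g^\theta$). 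Tracking ranks, $\ce''$ must then be associated with $\co_{m-1}(n-1)$ in $\mathfrak{sp}_{2(n-1)}$: this is precisely the reverse of the single-step induction $\co_m(n)=\Ind_{\el'}^\g(\co_{m-1}(n-1))$, and the dimension count via Eq.~\eqref{eq:dim-induced} pins down $\dim\ce''=d_{\sf Dy}(\co_{m-1}(n-1))$, i.e. $\ce''\in\cI(\be_{\el'})_{\co_{m-1}(n-1)}$. By the inductive hypothesis $\ce''=\ce_{\sf Dy}(\co_{m-1}(n-1))$ is uniquely determined, hence so is $\ce$, and comparing dimensions it must be $\ce_{\sf Dy}(\co_m(n))$. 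This completes the induction.

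\textbf{Main obstacle.} The delicate point is Step 2: I need to show that knowing $\g^\theta\subset\ce$ and that $\ce$ contains no simple root space $\g^{\ap_i}$ ($i<n$) really forces the clean direct-sum decomposition $\ce=\g^\theta\oplus(\esi_1\text{-layer})\oplus\ce''$ with the middle ``Heisenberg'' layer completely determined — a priori $\ce$ could contain only part of the $\esi_1{+}\esi_j$ root spaces, giving a smaller-than-expected subspace that nonetheless still has generic rank $2m-1$. Ruling this out requires the rank analysis: a symplectic upper-triangular matrix whose nonzero entries lie in $\g^\theta$, a proper subset of the $\esi_1{+}\esi_j$ spaces, and a $\be_{\el'}$-ideal, will have rank strictly less than $2m-1$ unless the whole $\esi_1$-row layer above $\ce''$ is present; this is where the explicit description \eqref{eq:sympl-embed} of $\spn$ as matrices symmetric about the antidiagonal, and a careful Jordan-rank computation à la Conjecture~\ref{conj:utochnenie} (which the authors note is proved for $\sln$ and whose symplectic analogue is what really underlies item \textsf{\bfseries 3}$^o$), has to be brought in. Everything else is bookkeeping with root systems and the dimension formula~\eqref{eq:dim-induced}.
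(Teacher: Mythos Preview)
There is a genuine gap, and it stems from a misreading of Lemma~\ref{lm:induced-simple-roots} in Step~1. You claim that no simple root space $\g^{\ap_i}$ ($i\le n-1$) can lie in $\ce$, on the grounds that otherwise $\co_m(n)$ would be induced from a Levi of type $\GR{A}{}\times\GR{C}{}$, ``contradicting'' the uniqueness of the rigid induction datum. But there is no contradiction: if $\ap_k\in\Delta(\ce)$ with $k\le m-1$, then $\el\{\ap_k\}=\mathfrak{gl}_k\oplus\mathfrak{sp}_{2(n-k)}$ still contains the standard $\mathfrak{sp}_{2(n-m+1)}$, and the induction simply factors through this larger Levi. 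In fact the Dynkin ideal itself has $\ap_1,\dots,\ap_{m-1}$ among its minimal roots (see item~{\sf\bfseries 2}$^o$ and Fig.~\ref{pikcha_sp}), so your assertion is false for the very ideal you are trying to characterise.

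This error then makes Step~2 incoherent. The ``$\esi_1$-layer'' you want to force into $\ce$ is precisely $\p\{\ap_1\}^{nil}$ (the roots involving $\esi_1$), and having it inside $\ce$ is \emph{equivalent} to $\ap_1\in\Delta(\ce)$---exactly what Step~1 just denied. The claim that this layer is ``$\be$-generated over $\g^\theta$'' is also wrong: $\theta$ is the maximal root, so the $\be$-ideal generated by $\g^\theta$ is $\g^\theta$ alone. (Incidentally, the standard Levi with $[\el',\el']=\mathfrak{sp}_{2(n-1)}$ is $\el\{\ap_1\}$, obtained by dropping $\ap_1$, not $\ap_n$.) Your ``main obstacle'' paragraph correctly senses that the $\esi_1$-layer need not be present, but the vague appeal to rank does not repair this; an ideal $\ce\subset[\ut,\ut]$ can certainly meet $\co_m(n)$ when $m\le n/2$.

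The paper's proof proceeds differently: it inducts on $n$ and splits into two cases. If some $\ap_k\in\Delta(\ce)$, one argues $k\le m-1$ and $\ap_1,\dots,\ap_k\in\Delta(\ce)$, then peels off $\p\{\ap_1,\dots,\ap_k\}^{nil}$ and invokes the inductive hypothesis on $\co_{m-k}(n-k)$ in $\mathfrak{sp}_{2(n-k)}$. If instead $\ce\subset[\ut,\ut]$, then for $2m>n$ the closure relation rules this out immediately, while for $m\le n/2$ a direct root-combinatorial bound (using the explicit matrix description~\eqref{eq:sympl-embed} and the rank constraint $\le 2m-1$) shows $\dim\ce<d_{\sf Dy}(\co_m(n))$, contradicting extremality. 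This second case is the real work, and it is exactly what your plan omits.
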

\begin{proof}
Arguing by induction on $n$, we may assume that all orbits $\co_m(n')$ with $n'<n$ and $m\le n'$ 
are lonely. The cases with $n=1,2$ form the base of induction. 
 
Since $\co_n(n)$ and $\co_1(n)$ are lonely, we may assume that $1<m<n$. 
We know that $d_{\sf min}(\co_m(n))=d_{\sf Dy}(\co_m(n))=(m-1)(2n-m+1)+1$. 
The minimal roots of the Dynkin ideal $\ce_{\sf Dy}(\co_m(n))$ are $\ap_1,\dots,\ap_{m-1},2\esi_m$.
The corresponding root spaces are marked by `$\ast$' in Fig.~\ref{pikcha_sp}.
Here $2\esi_m$ is the highest root of the regular simple subalgebra of type $\GR{C}{n-m+1}$ whose 
simple roots are $\ap_m,\dots,\ap_n$.  This structure of $\ce_{\sf Dy}(\co_m(n))$ and 
Fig.~\ref{pikcha_sp} visibly demonstrate
that $\co_m(n)$ is induced from the minimal nilpotent orbit in $\mathfrak{sp}_{2(n-m+1)}$ via the use 
of the standard parabolic subalgebra $\p\{\ap_1,\dots,\ap_{m-1}\}$.

\begin{figure}[htb]
\setlength{\unitlength}{0.025in}
\begin{center}
\begin{picture}(111,115)(-3,0)

\multiput(0,0)(110,0){2}{\line(0,1){110}}
\multiput(0,0)(0,110){2}{\line(1,0){110}}
\qbezier[50](0,55),(55,55),(110,55)
\qbezier[50](55,0),(55,55),(55,110)    
\qbezier[25](54,54),(82,82),(110,110)    
\qbezier[10](85,90),(85,100),(85,110)
\qbezier[10](60,90),(60,100),(60,110)
\qbezier[12](84,90),(97,90),(110,90)

\qbezier[35](20,20),(55,20),(90,20)   
\qbezier[35](20,20),(20,55),(20,90)    
\put(85,90){\line(1,0){5}}  \put(90,85){\line(0,1){5}}
\thicklines
{\color{magenta}
\multiput(5,105)(5,-5){4}{\line(0,1){5}}
\multiput(5,105)(5,-5){3}{\line(1,0){5}}
\multiput(90,20)(5,-5){4}{\line(1,0){5}}
\multiput(105,5)(-5,5){3}{\line(0,1){5}}
\put(20,90){\line(1,0){65}}  \put(85,85){\line(1,0){5}}
\put(90,20){\line(0,1){65}}  \put(85,85){\line(0,1){5}}
}
\thinlines
\multiput(6,106)(5,-5){4}{$\ast$}
\multiput(61,106)(5,-5){5}{$\star$}
\put(86,86){$\ast$}

\put(85,110){$\overbrace
{\mbox{\hspace{25\unitlength}}}^{m}$}   
\put(60,110){$\overbrace
{\mbox{\hspace{25\unitlength}}}^{m}$}   

\put(115,99){{\footnotesize $m-1$}}   
\put(116,26){{\footnotesize $n$}}   

\put(-5,95){\vector(1,1){11}}     \put(-10,93){\footnotesize $\ap_1$}     
\put(10,80){\vector(1,1){11}}     \put(5,78){\footnotesize $\ap_{m-1}$}       
\put(70,75){\vector(1,1){11}}     \put(61,71){\footnotesize $\esi_m{+}\esi_{m{+}1}$}       
\put(100,77){\vector(-1,1){10}}     \put(100,73){\footnotesize $2\esi_m$}       
\put(-20,53){$\spn=$}
\end{picture}
\lefteqn{\raisebox{98\unitlength}%
{$\left. {\parbox{1pt}{\vspace{20\unitlength}}}
\right\}$}}%
\lefteqn{\raisebox{25\unitlength}%
{$\left. {\parbox{1pt}{\vspace{55\unitlength}}}
\right\}$}}%

\caption{The Dynkin ideal for the extreme orbit $\co_m(n)$ in $\spn$}   \label{pikcha_sp}
\end{center}
\end{figure}

Assume that there is another $\ce\in \cI(\be)_{\co_m(n)}$.  Since $\co_m(n)$ is extreme, we must 
have $\dim\ce\ge (m-1)(2n-m+1)+1$.

\textbullet\quad If $\Delta(\ce)$ contains a simple root $\ap_k$, then $\co_m(n)$ is induced from a nilpotent orbit in 
$\el\{\ap_k\}=\mathfrak{gl}_k\oplus \mathfrak{sp}_{2n-2k}$, see Lemma~\ref{lm:induced-simple-roots}.
Since $\mathfrak{gl}_k$ has no nontrivial rigid orbits and the only rigid orbit from which $\co_m(n)$
can be induced is contained in $\mathfrak{sp}_{2(n-m+1)}$, we must have $k\le m-1$ and
$\ap_1,\dots,\ap_{k-1}$ also belong to $\Delta(\ce)$. Then $\ce=\p\{\ap_1,\dots,\ap_k\}^{nil}\oplus \ce'$, where 
$\ce'$ is an ideal associated with the orbit $\co_{m-k}(n-k)$ in 
$[\el\{\ap_1,\dots,\ap_k\},\el\{\ap_1,\dots,\ap_k\}]\simeq \mathfrak{sp}_{2(n-k)}$. By the induction 
assumption, $\co_{m-k}(n-k)$ is lonely. Hence $\ce'$ is necessarily the Dynkin ideal for $\co_{m-k}(n-k)$ and then
$\ce$ is the Dynkin ideal for $\co_m(n)$.

\textbullet\quad  
It remains to handle the case in which $\Delta(\ce)$ contains no simple roots, i.e., 
$\ce\subset [\ut,\ut]=:\ut'$. The dense $Sp_{2n}$-orbit meeting $\ut'$ corresponds to the partition 
$\boldsymbol{\beta}=(n,n)$. For $2m > n$, the largest part of $\boldsymbol{\beta}$ is fewer than that 
of $\blb$. Then the description of the closure relation in $\fN/Sp_{2n}$ via partitions~\cite[6.2]{cm} 
shows that $\co_m(n)\not\subset\ov{\co(\boldsymbol{\beta})}$, i.e., it
cannot meet $\ut'$.   Hence $\ce$ cannot lie in $\ut'$ and thereby  
$\co_m(n)$ is lonely whenever $m> n/2$.

\textbullet\quad Finally, we assume that $m\le n/2$ and show that if $\ce$ is a
$\be$-ideal such that $\ce\subset \ut'$ and $\rk A\le 2m-1$ for any $A\in \ce$, then 
$\dim\ce < (m-1)(2n-m+1)+1=d_{\sf Dy}(\co_m(n))$. The subsequent argument exploits (1) the chosen matrix form of $\spn$ and (2) the fact that $\Delta(\ce)$ is an upper ideal of $\Delta^+$.

{\bf --} \ If $\esi_m- \esi_j$ ($j\ge m+1$) belongs to $\Delta(\ce)$, then $\ce$ contains a matrix of rank $\ge 2m$.
For the same reason, $\esi_m+\esi_j$ ($j\ge 2m$) does not belong to $\Delta(\ce)$, too.
Therefore, all roots of $\ce$ are of the form $\esi_i-\esi_j$  ($i\le m-1$ and $j>i$) or 
$\esi_i+\esi_j$ ($i\le m-1$) or $\esi_s+\esi_t$ ($s\le t\le 2m-1$). 

{\bf --} \ If the  positive roots $\esi_1+\esi_{2m}, \esi_2+\esi_{2m-1},\dots,\esi_m+\esi_{m+1}$ belong
to $\Delta(\ce)$, then $\ce$ contains a matrix of rank $2m$. (These roots are marked by `$\star$' in Fig.~\ref{pikcha_sp}, and since the submatrix $\eus B$ in Eq.~\eqref{eq:sympl-embed} is symmetric with respect to the
antidiagonal, this gives rise to $2m$ nonzero entries!)
Hence at least one of them does not belong to $\Delta(\ce)$. 
Take the minimal $i$ such that $\esi_i+\esi_{2m+1-i}\not\in\Delta(\ce)$. Since $\ce$ is $B$-stable (i.e., $\Delta(\ce)$ is an upper ideal of $(\Delta^+,\curle)$), 
$\Delta(\ce)$ is contained in 
\[
    \bigl(\{\esi_l-\esi_j\mid l<j \ \& \ l\le i-1\}\cup \{\esi_s+\esi_t\mid s\le t\le 2m-i \}\bigr)\setminus 
    \{\ap_1,\dots,\ap_{m-1}\}
\]
and a direct calculation shows that $\dim\ce \le (i-1)(2n-2m+\frac{i-2}{2})+ \frac{(2m+1-i)(2m-i)}{2}$.
Now the desired assertion that $\dim\ce <d_{\sf Dy}(\co_m(n))$ is implied by the next lemma.

Thus, our assumption that there is another $\ce\in\cI(\be)_{\co_m(n)}$ leads to a contradiction, and we are done.
\end{proof}

\begin{lm}  \label{lm:ner-vo}
If\/ $1\le i\le m\le n/2$ and $m\ge 2$, then  
\[
(i-1)(2n-2m+\frac{i-2}{2})+ \frac{(2m+1-i)(2m-i)}{2}< (m-1)(2n-m+1)+1 .
\]
\end{lm}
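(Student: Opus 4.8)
The plan is to prove the inequality by treating it as a quadratic inequality in the variable $i$ (for fixed $m,n$), or alternatively — and this is probably cleaner — as a statement to be verified at the extreme values of $i$ together with a convexity/monotonicity argument. Write $f(i)=(i-1)(2n-2m+\frac{i-2}{2})+\frac{(2m+1-i)(2m-i)}{2}$ and $C=(m-1)(2n-m+1)+1$; the goal is $f(i)<C$ for $1\le i\le m$, under $m\ge 2$ and $n\ge 2m$.

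First I would expand $f(i)$ as a polynomial in $i$. The term $(i-1)(2n-2m+\frac{i-2}{2})$ contributes $\frac{1}{2}i^2$ plus lower-order terms, and $\frac{(2m+1-i)(2m-i)}{2}=\frac{1}{2}(i^2-(4m+1)i+2m(2m+1))$ also contributes $\frac{1}{2}i^2$. So $f(i)=i^2+(\text{linear in }i, \text{with coefficients involving }n,m)+(\text{const})$; the leading coefficient is $+1>0$, so $f$ is convex in $i$. Hence on the interval $[1,m]$ the maximum of $f$ is attained at an endpoint, $i=1$ or $i=m$, and it suffices to check those two cases. This reduces the lemma to two explicit one-variable (in fact, after substitution, at most two-variable) inequalities.

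For $i=1$: $f(1)=0+\frac{(2m)(2m-1)}{2}=m(2m-1)=2m^2-m$, and we must show $2m^2-m<(m-1)(2n-m+1)+1$. Since $n\ge 2m$ we have $(m-1)(2n-m+1)\ge (m-1)(3m+1)=3m^2-2m-1$, so the right side is $\ge 3m^2-2m$, and $3m^2-2m-(2m^2-m)=m^2-m=m(m-1)\ge 2>0$ for $m\ge 2$. For $i=m$: $f(m)=(m-1)(2n-2m+\frac{m-2}{2})+\frac{(m+1)m}{2}$, and a direct subtraction $C-f(m)$ simplifies — the $(m-1)\cdot 2n$ terms partially cancel, leaving $(m-1)\bigl((2n-m+1)-(2n-2m+\frac{m-2}{2})\bigr)+1-\frac{m(m+1)}{2}=(m-1)\cdot\frac{m+4}{2}+1-\frac{m^2+m}{2}=\frac{m^2+3m-4}{2}+1-\frac{m^2+m}{2}=\frac{2m-2}{2}=m-1>0$ for $m\ge 2$. (I would double-check this arithmetic carefully, since it is the place most prone to slips.)

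The main obstacle is purely bookkeeping: getting the polynomial expansion of $f(i)$ and the endpoint evaluations exactly right, including the role of the hypothesis $n\ge 2m$ (used only in the $i=1$ case — note the $i=m$ case needs no bound on $n$, which is a useful sanity check). Once convexity in $i$ is established the rest is routine; one should also verify the edge case $i=m$ is genuinely allowed and that no smaller value of $n$ sneaks in through $m\le n/2$ with $n$ odd, but since only $n\ge 2m$ is invoked this is automatic.
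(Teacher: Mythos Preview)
Your proof is correct. The convexity argument reduces the inequality to its two endpoint cases, and your computations at $i=1$ and $i=m$ are right (in the $i=m$ case you have implicitly folded the $+1$ into the fraction, so $\frac{2m-4}{2}+1=\frac{2m-2}{2}=m-1$; there is no slip).

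The paper takes a different, more direct route: it simply rewrites the difference of the two sides as
\[
(m-i)\bigl[2(n-2m)+m+i-1\bigr]+(i-1),
\]
which is visibly nonnegative term by term under the hypotheses $1\le i\le m$, $n\ge 2m$, $m\ge 2$, and strictly positive since at least one of the two summands is. This factorisation is slicker and handles all $i$ at once, but one has to spot it; your convexity-plus-endpoints method is more systematic and makes transparent exactly where the hypothesis $n\ge 2m$ is used (only at $i=1$), at the cost of a little more bookkeeping.
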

\begin{proof}
The difference of two quantities in question can be written as \\
$(m-i)[2(n-2m)+m+i-1]+i-1$, which is positive.
\end{proof}


\subsection{Lonely orbits in the even orthogonal and exceptional Lie algebras.}
Write $\co_{imd}(\g)$ for the intermediate orbit in $\g$ with the fundamental $\theta$.
The algebras of type $\GR{D}{n}$ ($n\ge 5$) and $\GR{E}{n}$ ($n=6,7,8$) have 
a unique standard Levi subalgebra $\el$ with $[\el,\el]=\mathfrak{so}_8$ and their intermediate orbits are 
induced from $(\el, \co_{imd}(\mathfrak{so}_8))$. Moreover, this is the {\bf only} possibility to induce them 
from a rigid orbit in a proper Levi subalgebra. 
Using this property, we prove below that the intermediate orbits in all
$\GR{D}{n}$ and $\GR{E}{n}$ are lonely. It is also easily seen directly that $\co_{imd}(\GR{G}{2})$ is lonely.

We use the embedding $\sone\subset\sltn$ such that $\sone$ consists of the skew-symmetric matrices 
with respect to the antidiagonal.
For the simple roots of $\sone$, we use the standard notation in which 
$\ap_i=\esi_i-\esi_{i+1}$ ($i\le n-1$) and $\ap_n=\esi_{n-1}+\esi_n$. 

\begin{lm}
The intermediate orbit in\/ $\mathfrak{so}_8$ is lonely.
\end{lm}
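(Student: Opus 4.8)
The plan is to combine rigidity of $\co:=\co_{imd}(\mathfrak{so}_8)=\co(3,2,2,1)$ with a short case check carried out entirely inside $[\ut,\ut]$. First, $\co$ is extreme by Theorem~\ref{thm:extrem-so}(ii): its weighted Dynkin diagram has the value $0$ at the branch node and $1$ at the three end nodes, so $\dim\g(h;1)=6$, $\dim\g(h;2)=3$, and $d_{\sf min}(\co)=d_{\sf Dy}(\co)=\dim\ce_{\sf Dy}(\co)=5$. Now $\co(3,2,2,1)$ is rigid in $\mathfrak{so}_8$ — it is precisely the base orbit from which the intermediate orbits of the larger $\GR{D}{n}$ and $\GR{E}{n}$ are induced, and rigidity is part of the classical description of induction in the classical Lie algebras (\S\ref{subs:induced} and \cite{ke83}) — so by the corollary following Lemma~\ref{lm:induced-simple-roots} every $\ce\in\cI(\be)_\co$ lies in $[\ut,\ut]$. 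Since in addition $5=d_{\sf min}(\co)\le\dim\ce\le\tfrac12\dim\co=8=\dim[\ut,\ut]$, the search is reduced from all $50$ ad-nilpotent ideals of $\GR{D}{4}$ to the upper ideals of $(\Delta^+,\curle)$ consisting of non-simple roots and of cardinality at least $5$.

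Next I would list these ideals from the eight-element poset of non-simple positive roots of $\GR{D}{4}$. The three minimal non-simple roots $\esi_1-\esi_3$, $\esi_2-\esi_4$, $\esi_2+\esi_4$ generate three distinct $5$-dimensional ideals; the Dynkin ideal $\ce_{\sf Dy}(\co)=\g(h;{\ge}2)$ is the one generated by the three height-$3$ roots $\esi_1-\esi_4$, $\esi_1+\esi_4$, $\esi_2+\esi_3$; and every ideal in $[\ut,\ut]$ of dimension $\ge6$ contains $\ce_{\sf Dy}(\co)+\g^\gamma$ for some $\gamma\in\{\esi_1-\esi_3,\ \esi_2-\esi_4,\ \esi_2+\esi_4\}$. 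Altogether there are eleven ideals, and it remains to rule out the three ``small'' $5$-dimensional ones and the three subspaces $\ce_{\sf Dy}(\co)+\g^\gamma$.

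The verification uses the matrix model just introduced ($\mathfrak{so}_8\subset\mathfrak{sl}_8$, skew-symmetric about the antidiagonal, $\ap_i=\esi_i-\esi_{i+1}$ for $i\le 3$ and $\ap_4=\esi_3+\esi_4$): an element of $\co(3,2,2,1)$ has matrix rank $4$ and the square of its matrix has rank $1$. Reading off the positions of the root vectors, every element of the $5$-dimensional ideal generated by $\esi_1-\esi_3$ is the sum of a rank-one ``row'' matrix and a rank-one ``column'' matrix, hence has rank $\le 2$, so this ideal misses $\co$; for the ideals generated by $\esi_2\pm\esi_4$ the row indices and the column indices occurring among the root vectors are disjoint, so every element squares to $0$, and again the ideal misses $\co$. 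For the subspaces $\ce_{\sf Dy}(\co)+\g^\gamma$ it suffices to exhibit one element whose matrix-square has rank $2$, since then that element is not in $\ov\co$ and so the ideal is not associated with $\co$; by the outer automorphism interchanging $\ap_3$ and $\ap_4$ one only needs $\gamma=\esi_1-\esi_3$ and $\gamma=\esi_2-\esi_4$, and both are short explicit computations. Assembling the cases yields $\cI(\be)_\co=\{\ce_{\sf Dy}(\co)\}$, i.e. $\co$ is lonely.

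The delicate point is this last step. One is tempted to shrink the case count by $\GR{D}{4}$-triality, but the triality outer automorphisms do \emph{not} fix the $8$-dimensional vector representation (they permute it with the two half-spin representations), so they need not preserve Jordan types of matrices in $\mathfrak{so}_8\subset\mathfrak{sl}_8$; only the ``classical'' involution $\ap_3\leftrightarrow\ap_4$, realised by conjugation with an element of $O_8$, may be invoked. Consequently the three minimal non-simple roots really fall into two cases (one producing elements of rank $\le2$, the other producing elements that square to $0$), and the rank-of-square computation for the dimension-$\ge6$ ideals genuinely has to be done for two essentially different $\gamma$, not one.
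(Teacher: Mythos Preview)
Your proof is correct, and it takes a genuinely different route from the paper's.

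The paper does \emph{not} use rigidity. Instead it observes that $\co_{imd}(\mathfrak{so}_8)$ is the maximal \emph{spherical} nilpotent orbit and invokes \cite[Prop.\,4.1]{pr2}: for simply-laced $\g$, every $\be$-ideal associated with a spherical orbit is abelian. This cuts the search down to abelian ideals of dimension $\ge 5$; besides $\ce_{\sf Dy}(\co)$ there are three maximal abelian ideals of dimension~$6$ (minimal roots $\ap_1,\ap_3,\ap_4$) and three of dimension~$5$ (minimal roots $\ap_1+\ap_2,\ap_2+\ap_3,\ap_2+\ap_4$), and the paper simply asserts that all six are associated with the $12$-dimensional orbits.

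Your reduction via rigidity and the corollary to Lemma~\ref{lm:induced-simple-roots} lands on a \emph{different} collection of candidate ideals: the three $5$-dimensional ones happen to coincide with the paper's, but your three $6$-dimensional ideals $\ce_{\sf Dy}(\co)+\g^\gamma$ are \emph{not} abelian (e.g.\ $(\ap_1+\ap_2)+(\ap_2+\ap_3+\ap_4)=\theta$), so the paper never needs to look at them; conversely, the paper's $6$-dimensional abelian ideals contain simple roots, so they lie outside your $[\ut,\ut]$. Your explicit rank and rank-of-square computations in the matrix model then finish the job, and your caution about which outer automorphism may be invoked (only $\ap_3\leftrightarrow\ap_4$, not full triality) is well placed. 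The trade-off is clear: the paper's argument is shorter and more conceptual but leans on the external sphericality result~\cite{pr2}, whereas yours is entirely self-contained within the machinery already set up in the paper, at the price of a few matrix checks.
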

\begin{proof}
If $\co=\co_{imd}(\mathfrak{so}_8)$, then $\dim\co=16$ and 
$d_{\sf Dy}(\co)=5$.  The minimal roots of $\Delta(\ce_{\sf Dy}(\co))$ are
$\ap_1+\ap_2+\ap_3$,
$\ap_1+\ap_2+\ap_4$, $\ap_2+\ap_3+\ap_4$. This $\be$-ideal is abelian, i.e., 
$[\ce_{\sf Dy}(\co),\ce_{\sf Dy}(\co)]=0$. The orbit $\co$ is spherical~\cite[(4.4)]{manuscr}.  Actually, it is {\bf the} 
maximal spherical nilpotent orbit.
By \cite[Prop.\,4.1]{pr2}, if $\g$ is simply-laced and $\co$ is spherical, then any element of $\cI(\be)_\co$ 
is an abelian $\be$-ideal. Here $\ce_{\sf Dy}(\co)$ is a maximal abelian ideal. There are also three 
other maximal abelian ideals of dimension $6$ (their minimal roots are $\ap_1,\ap_3,$ and $\ap_4$, 
respectively), and three other non-maximal abelian ideals of dimension $5$ (their minimal roots are 
$\ap_1+\ap_2,\ap_2+\ap_3,$ and $\ap_2+\ap_4$, respectively). But all these abelian ideals are associated with smaller nilpotent orbits
(of dimension 12). Hence $\ce_{\sf Dy}(\co)$ is the only element of $\cI(\be)_\co$ and $\co$ is lonely.
\end{proof}

Set $\co_n=\co_{imd}(\sone)$, $n\ge 4$. We know that $\co_4$ is lonely and
$\co_n$ corresponds to the partition $\blb_n=(2n-5,2,2,1)$.  The {\sf wDd} of $\co_n$ is 
\ \ \raisebox{-1ex}{\begin{Dn}{2}{2}{1}{0}{1}{1}\end{Dn}} and the minimal roots of 
$\Delta(\ce_{\sf Dy}(\co_n))$ are
$\ap_1,\dots,\ap_{n-4}, \ap_{n-3}+\ap_{n-2}+\ap_{n-1},$
$\ap_{n-3}+\ap_{n-2}+\ap_n,$ and $\ap_{n-2}+\ap_{n-1}+\ap_n$.

\begin{figure}[htb]
\setlength{\unitlength}{0.025in}
\begin{center}
\begin{picture}(102,105)(-3,0)

\multiput(0,0)(100,0){2}{\line(0,1){100}}
\multiput(0,0)(0,100){2}{\line(1,0){100}}
\qbezier[50](0,50),(50,50),(100,50)
\qbezier[50](50,0),(50,50),(50,100)
\qbezier[25](50,50),(75,75),(100,100)     
\qbezier[35](30,30),(50,30),(70,30)   \qbezier[20](70,45),(70,53),(70,70)
\qbezier[35](30,30),(30,50),(30,70)   \qbezier[20](45,70),(53,70),(70,70)
\qbezier[25](30,70),(30,85),(30,100)

\thicklines
{\color{magenta}
\multiput(5,95)(5,-5){6}{\line(0,1){5}}
\multiput(5,95)(5,-5){5}{\line(1,0){5}}
\multiput(95,5)(-5,5){6}{\line(1,0){5}}
\multiput(95,5)(-5,5){5}{\line(0,1){5}}
\put(30,70){\line(1,0){15}}     \put(45,65){\line(0,1){5}}
\put(45,65){\line(1,0){10}}  
\multiput(55,60)(5,-5){2}{\line(0,1){5}}
\put(70,30){\line(0,1){15}}     \put(65,45){\line(1,0){5}}
\put(65,45){\line(0,1){10}}  
\multiput(55,60)(5,-5){2}{\line(1,0){5}}
}
\thinlines
\multiput(6,96)(5,-5){6}{$\ast$}
\multiput(46,66)(5,0){2}{$\ast$}
\put(56,61){$\ast$}

\put(0,100){$\overbrace
{\mbox{\hspace{30\unitlength}}}^{n-4}$}   
\put(30,100){$\overbrace
{\mbox{\hspace{20\unitlength}}}^{4}$}   

\put(108,23){{\footnotesize $n$}}   

\put(-5,85){\vector(1,1){11}}     \put(-10,83){\footnotesize $\ap_1$}     
\put(20,60){\vector(1,1){11}}     \put(15,58){\footnotesize $\ap_{n-4}$}       
\put(-20,48){$\sone=$}
\end{picture}
\lefteqn{\raisebox{23\unitlength}%
{$\left. {\parbox{1pt}{\vspace{50\unitlength}}}
\right\}$}}%

\caption{The Dynkin ideal for the extreme orbit $\co_n=\co_{imd}(\sone)$}   \label{pikcha_so}
\end{center}
\end{figure}

\noindent
This structure of $\ce_{\sf Dy}(\co_n)$ in Fig.~\ref{pikcha_so} visibly demonstrates that $\co_n$ is induced 
from $(\mathfrak{so}_8, \co_4)$ via the use of the standard parabolic subalgebra 
$\p\{\ap_1,\dots,\ap_{n-4}\}$. The central square represents $\mathfrak{so}_8$, and the
three marked roots inside it  are the minimal roots of $\ce_{\sf Dy}(\co_4)$. Using the {\sf wDd} or 
Fig.~\ref{pikcha_so}, one computes that $d_{\sf Dy}(\co_n)=n^2-n-7$ ($n\ge 4$).

\begin{prop}    \label{prop:lonely-Dn}
If $\co_5$ is lonely, then so are all $\co_n$ with $n\ge 6$. 
\end{prop}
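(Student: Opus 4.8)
The plan is to transcribe the argument of Theorem~\ref{thm:lonely-sp}, with the rigid orbit $\co_4=\co_{imd}(\mathfrak{so}_8)$ and the algebra $\mathfrak{so}_8$ now playing the role that $\co_{min}(\mathfrak{sp}_{2(n-m+1)})$ and $\mathfrak{sp}_{2(n-m+1)}$ played there. I would argue by strong induction on $n$, the cases $n=4$ and $n=5$ (lonely by the preceding lemma and by hypothesis, respectively) being the base. So assume $\co_{n'}$ is lonely for all $4\le n'<n$ and let $\ce\in\cI(\be)_{\co_n}$ with $\ce\ne\ce_{\sf Dy}(\co_n)$; since $\co_n$ is extreme, $\dim\ce\ge d_{\sf Dy}(\co_n)=n^2-n-7$.

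\emph{Case 1: $\Delta(\ce)$ contains a simple root $\ap_k$.} Then $\co_n=\Ind_{\el\{\ap_k\}}^{\sone}(\co')$ for some $\co'$, by Lemma~\ref{lm:induced-simple-roots}. Deleting one node of the $\GR{D}{n}$ diagram yields $\el\{\ap_k\}\cong\mathfrak{gl}_n$ when $k\in\{n-1,n\}$ and $\el\{\ap_k\}\cong\mathfrak{gl}_k\oplus\mathfrak{so}_{2(n-k)}$ when $k\le n-2$. The first case is impossible: every orbit of $\mathfrak{gl}_n$ is Richardson, so $\co_n$ would be induced from the zero orbit of a proper Levi, contradicting that $(\mathfrak{so}_8,\co_4)$ is the \emph{only} rigid induction datum of $\co_n$ (and $\co_4\ne 0$). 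In the second case, $\mathfrak{gl}_k$ carries no nonzero rigid orbit while the rigid core of $\co_n$ lies in $\mathfrak{so}_8$; hence $2(n-k)\ge 8$, i.e.\ $k\le n-4$, and, exactly as in Theorem~\ref{thm:lonely-sp}, the $\mathfrak{gl}_k$-component of $\co'$ must be principal, so $\ap_1,\dots,\ap_{k-1}\in\Delta(\ce)$ as well. Writing $\ce=\p\{\ap_1,\dots,\ap_k\}^{nil}\oplus\ce'$, the summand $\ce'$ is then a $\be$-ideal of $\mathfrak{so}_{2(n-k)}$ associated with the orbit of that algebra having rigid core $\co_4$, namely the intermediate orbit $\co_{n-k}$, and $\co_n=\Ind(\co_{pr}(\mathfrak{gl}_k)\times\co_{n-k})$ by transitivity of induction. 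Since $4\le n-k\le n-1$, the induction hypothesis gives $\ce'=\ce_{\sf Dy}(\co_{n-k})$, and the decomposition exhibited in Fig.~\ref{pikcha_so} then shows $\ce=\p\{\ap_1,\dots,\ap_{n-4}\}^{nil}\oplus\ce_{\sf Dy}(\co_4)=\ce_{\sf Dy}(\co_n)$, a contradiction.

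\emph{Case 2: $\Delta(\ce)$ contains no simple root, i.e.\ $\ce\subseteq[\ut,\ut]$.} Then $\ov{\co_n}=G{\cdot}\ce\subseteq\ov{G{\cdot}[\ut,\ut]}$. Just as in the symplectic case, a generic element of $[\ut,\ut]$ (in the antidiagonal realization of $\sone$) has rank $2n-2$ and two Jordan blocks of equal size, so the dense $SO_{2n}$-orbit meeting $[\ut,\ut]$ corresponds to the partition $(n,n)$. But the largest part of $\blb_n=(2n-5,2,2,1)$ is $2n-5$, which exceeds $n$ for $n\ge 6$; hence $\blb_n$ is not dominated by $(n,n)$, so $\co_n\not\subseteq\ov{\co(n,n)}$ by the partition description of the closure order~\cite[6.2]{cm}. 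Thus no such $\ce$ exists. Together the two cases give $\cI(\be)_{\co_n}=\{\ce_{\sf Dy}(\co_n)\}$.

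The main point to get right is Case~1: one must check that $\el\{\ap_k\}\cong\mathfrak{gl}_n$ genuinely cannot occur, and that the residual ideal $\ce'$ is associated with the intermediate orbit $\co_{n-k}$ rather than merely with some orbit of the correct codimension. Both points rest on the uniqueness of the rigid induction datum of $\co_n$ quoted before the proposition from the work of Kempken and Elashvili; granting this, everything is a routine transfer of the argument for $\spn$, with the inequality $2n-5>n$ (valid for $n\ge 6$) replacing the inequality $2m>n$ used there to dispose of Case~2.
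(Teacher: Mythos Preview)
Your overall strategy is exactly that of the paper: induction on $n$, splitting into the case where $\Delta(\ce)$ contains a simple root (handled via the uniqueness of the rigid induction datum $(\mathfrak{so}_8,\co_4)$ and the inductive hypothesis) and the case $\ce\subset[\ut,\ut]$ (handled by comparing the largest Jordan block of $\co_n$ with that of the dense orbit in $G{\cdot}[\ut,\ut]$). Case~1 is fine, indeed a bit more careful than the paper in singling out $k\in\{n-1,n\}$.

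Case~2, however, contains a factual error. The analogy with $\spn$ breaks down: a generic element of $[\ut,\ut]\subset\sone$ does \emph{not} have partition $(n,n)$. In the antidiagonal realisation the upper-right $n\times n$ block of a matrix in $\sone$ is skew-symmetric (not symmetric) with respect to the antidiagonal, so the generic rank drops further than in the symplectic case. The dense $SO_{2n}$-orbit in $G{\cdot}[\ut,\ut]$ corresponds to
\[
\boldsymbol{\beta}=\begin{cases}(2m-1,\,2m-1,\,1,\,1), & n=2m,\\ (2m+1,\,2m-1,\,1,\,1), & n=2m+1,\end{cases}
\]
as the paper records (and as you can confirm from the $n=5$ case treated in Lemma~\ref{lm:D5}, where the dense orbit is $(5,3,1,1)$, not $(5,5)$). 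In fact $\co(n,n)$ is not even contained in $\ov{G{\cdot}[\ut,\ut]}$, since $(n,n)$ does not lie below $\boldsymbol{\beta}$ in the dominance order.

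Fortunately this does not derail the argument: the largest part of $\boldsymbol{\beta}$ is at most $n$ in either parity, and your inequality $2n-5>n$ (valid precisely for $n\ge 6$) then shows $\blb_n\not\le\boldsymbol{\beta}$, hence $\co_n\not\subset\ov{\co(\boldsymbol{\beta})}$, exactly as you intended. So the fix is simply to replace the incorrect identification of the dense orbit with the correct $\boldsymbol{\beta}$ above; the rest of your Case~2 then goes through verbatim.
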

\begin{proof}
Arguing by induction on $n$, we may assume that $\co_4,\dots,\co_{n-1}$ are lonely and $n\ge 6$.
Suppose that $\ce\in\cI(\be)_{\co_n}$.

\textbullet\quad If $\Delta(\ce)$ contains an $\ap_k\in\Pi$, then $\co_n$ is induced from a nilpotent 
$L\{\ap_k\}$-orbit $\co'$ in 
$\el\{\ap_k\}=\mathfrak{gl}_k\oplus \mathfrak{so}_{2n-2k}$, see Lemma~\ref{lm:induced-simple-roots}.
Since $\mathfrak{gl}_k$ has no nontrivial rigid orbits and the only rigid orbit from which $\co_n$ can be 
induced is contained in the unique standard Levi of semisimple type $\GR{D}{4}$, we must have 
$k\le n-4$ and $\ap_1,\dots,\ap_{k-1}$ also belong to $\Delta(\ce)$. Then 
$\ce=\p\{\ap_1,\dots,\ap_k\}^{nil}\oplus \ce'$, where $\ce'$ is an ideal associated with $\co_{n-k}$ 
in $[\el\{\ap_1,\dots,\ap_k\},\el\{\ap_1,\dots,\ap_k\}]\simeq \mathfrak{so}_{2(n-k)}$ and 
$n{-}k\ge 4$. By the induction assumption, $\co_{n-k}$ is lonely. Hence $\ce'$ is the Dynkin ideal for 
$\co_{n-k}$ and then $\ce$ is the Dynkin ideal for $\co_n$.

\textbullet\quad Assume that $\ce\subset \ut':=[\ut,\ut]$. The dense orbit in $SO_{2n}{\cdot}\ut'$ 
corresponds to the partition 
\begin{center}
$\boldsymbol{\beta}=\begin{cases}   (2m-1,2m-1,1,1),  & \text{ if } \ n=2m \\
    (2m+1,2m-1,1,1), & \text{ if } \ n=2m+1 \end{cases}$ \ .
\end{center} 
For $n\ge 6$, the largest part of $\boldsymbol{\beta}$ is fewer than that of $\blb_n$.
Then the description of the closure relation in $\fN/SO_{2n}$ via partitions~\cite[6.2]{cm} shows
that $\co_n\not\subset \ov{\co(\boldsymbol{\beta})}$, i.e.,
$\co_n$ cannot meet $\ut'$.   Hence $\ce$ cannot lie in $\ut'$ and thereby  
$\co_n$ is lonely whenever $n\ge 6$.
\end{proof}

To complete the $\sone$-case, we have only to prove the following assertion.

\begin{lm}   \label{lm:D5}
The intermediate orbit $\co_5$ in $\mathfrak{so}_{10}$ is lonely.
\end{lm}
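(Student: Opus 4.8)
The plan is to prove that every $\ce\in\cI(\be)_{\co_5}$ coincides with the Dynkin ideal $\ce_{\sf Dy}(\co_5)$, splitting the argument according to whether $\Delta(\ce)$ contains a simple root. Throughout I use that $\co_5$ is extreme (Theorem~\ref{thm:extrem-so}), so that $\dim\ce\ge d_{\sf min}(\co_5)=d_{\sf Dy}(\co_5)=5^2-5-7=13$; that $\co_5$ corresponds to the partition $(5,2,2,1)$ of $10$; and that, as exhibited by Fig.~\ref{pikcha_so}, $\ce_{\sf Dy}(\co_5)=\p\{\ap_1\}^{nil}\oplus\ce_{\sf Dy}(\co_4)$, where $\co_4=\co_{imd}(\mathfrak{so}_8)$ sits inside the semisimple part of $\el\{\ap_1\}$, the unique standard Levi subalgebra of $\mathfrak{so}_{10}$ of type $\GR{D}{4}$.

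Suppose first that $\Delta(\ce)$ contains a simple root, and set $\eus S=\{\ap\in\Pi\mid\g^\ap\subset\ce\}\ne\varnothing$. By Lemma~\ref{lm:induced-simple-roots}, $\co_5$ is induced from the standard Levi $\el\{\eus S\}$, hence (using transitivity of induction, \cite[Ch.\,7]{cm}, and the fact that $\mathfrak{gl}$-factors carry no nontrivial rigid orbits) $\co_5$ is induced from a rigid orbit in a Levi subalgebra contained in $\el\{\eus S\}$; since the only rigid orbit from which $\co_5$ can be induced lies in a Levi of semisimple type $\GR{D}{4}$, the root system spanned by $\Pi\setminus\eus S$ must contain a subsystem of type $\GR{D}{4}$. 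In particular $|\Pi\setminus\eus S|\ge 4$, so $\eus S$ is a single simple root; and among the maximal Levi subalgebras $\el\{\ap\}$, $\ap\in\Pi$, only $\el\{\ap_1\}$ has semisimple part of type $\GR{D}{4}$, the others being of type $\GR{A}{4}$, $\GR{A}{3}\times\GR{A}{1}$ or $\GR{A}{2}\times\GR{A}{1}\times\GR{A}{1}$, none containing $\GR{D}{4}$. Thus $\eus S=\{\ap_1\}$ and $\ce=\p\{\ap_1\}^{nil}\oplus\ce'$, where $\ce'$ is a $(\be\cap\el\{\ap_1\})$-ideal associated with a nilpotent orbit $\co'$ of $\el\{\ap_1\}$ and $\co_5=\Ind_{\el\{\ap_1\}}^\g(\co')$. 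By uniqueness of the induction datum — equivalently, $\dim\co'=\dim\co_4=16$ by \eqref{eq:dim-induced} and $\co_4$ is the unique $16$-dimensional nilpotent orbit of $\mathfrak{so}_8$ — we get $\co'=\co_4$; and since $\co_4$ is lonely (by the lemma on $\mathfrak{so}_8$ established above), $\ce'=\ce_{\sf Dy}(\co_4)$, whence $\ce=\ce_{\sf Dy}(\co_5)$.

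It remains to rule out the case $\Delta(\ce)\cap\Pi=\varnothing$, i.e., $\ce\subset\ut':=[\ut,\ut]$. Here $\dim\ce\ge 13$, while $\ce\ne\ut'$ because $G{\cdot}\ut'=\ov{\co(5,3,1,1)}\ne\ov{\co_5}$ (the partition $(5,3,1,1)$ strictly dominates $(5,2,2,1)$); since $\dim\ut'=|\Delta^+|-|\Pi|=20-5=15$, this leaves $\dim\ce\in\{13,14\}$. As $\Delta(\ce)$ is an upper ideal of $(\Delta^+,\curle)$, and a non-simple root of height $\ge 3$ can be removed from $\ut'$ only together with at least two further non-simple roots, such an $\ce$ arises from $\ut'$ by deleting one or two of the four minimal non-simple roots $\ap_1+\ap_2$, $\ap_2+\ap_3$, $\ap_3+\ap_4$, $\ap_3+\ap_5$; this yields ten candidate $\be$-ideals. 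Working in the matrix model of $\mathfrak{so}_{10}$ — matrices that are skew-symmetric with respect to the antidiagonal, with $\be$ the upper-triangular ones — and using the upper-ideal property together with the fact that every element of $\ov{\co_5}$ has rank at most $6$, with equality exactly on $\co_5$, one checks that the generic element of each of these ten ideals has Jordan type $\ne(5,2,2,1)$; hence $G{\cdot}\ce\ne\ov{\co_5}$ for each, contradicting $\ce\in\cI(\be)_{\co_5}$. Therefore $\Delta(\ce)$ always meets $\Pi$, and by the previous paragraph $\ce=\ce_{\sf Dy}(\co_5)$; thus $\cI(\be)_{\co_5}=\{\ce_{\sf Dy}(\co_5)\}$ and $\co_5$ is lonely.

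I expect the main obstacle to be this last verification — the $\mathfrak{so}_{10}$-counterpart of the concluding, computational part of the proof of Theorem~\ref{thm:lonely-sp} — where one must check, ideal by ideal, that none of the finitely many $\be$-ideals contained in $[\ut,\ut]$ and large enough to be associated with $\co_5$ actually is.
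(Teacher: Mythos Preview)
Your argument is correct and follows exactly the paper's route: reduce via Lemma~\ref{lm:induced-simple-roots} and the uniqueness of induction from a rigid orbit to the (already handled) $\GR{D}{4}$ case when $\Delta(\ce)$ meets $\Pi$, and otherwise bound $\dim\ce\ge 13$ by extremeness and check the finitely many $\be$-ideals of $\ut'$ of dimension $13$ or $14$ directly; your explicit enumeration of these ten ideals (four of codimension~$1$, six of codimension~$2$, since every height-$3$ root in $\GR{D}{5}$ covers two height-$2$ roots) is a useful sharpening of the paper's ``one verifies directly''.

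One small correction: the parenthetical claim that rank~$6$ occurs \emph{exactly} on $\co_5$ inside $\ov{\co_5}$ is false --- for instance $\co(3,3,2,2)\subset\ov{\co_5}$ also consists of rank-$6$ matrices --- so rank alone does not settle the ten cases; but since your actual check is ``Jordan type $\ne(5,2,2,1)$'' this does not affect the proof.
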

\begin{proof}  Let $\ce\in \cI(\be)_{\co_5}$.
Repeating the argument of Proposition~\ref{prop:lonely-Dn}, we conclude that either 
$\ce=\ce_{\sf Dy}(\co_5)$ or
$\ce\subset \ut'$. For $n=5$, the partition for the dense $SO_{10}$-orbit in $\ut'$ is
$(5,3,1,1)$ and the partition for $\co_5$ is $(5,2,2,1)$. Hence $\co_5$ does meet $\ut'$. 
Here $\dim\ut'=15$ and $d_{\sf Dy}(\co_5)=13$. Therefore $\dim\ce\ge 13$ and one verifies 
directly that all $B$-stable subspaces of $\ut'$ of dimension $13,14$ are associated with some other 
orbits.
\end{proof}

\begin{prop}     \label{prop:except-lonely}
The intermediate orbits in $\GR{E}{n}$ ($n=6,7,8$) are lonely. 
\end{prop}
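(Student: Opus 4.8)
The plan is to follow the same two-bullet strategy used for $\GR{D}{n}$ in Proposition~\ref{prop:lonely-Dn} and for $\spn$ in Theorem~\ref{thm:lonely-sp}, adapted to the exceptional types. Let $\co=\co_{imd}(\g)$ for $\g$ of type $\GR{E}{n}$, and suppose $\ce\in\cI(\be)_\co$. The first step is to analyse the case where $\Delta(\ce)$ contains some simple root $\ap_k$. By Lemma~\ref{lm:induced-simple-roots}, $\co$ is then induced from a nilpotent orbit in the Levi subalgebra $\el\{\ap_k\}$, and more generally, if $\eus S=\{\ap\in\Pi\mid \g^\ap\subset\ce\}\ne\varnothing$, then $\co=\Ind_{\el\{\eus S\}}^\g(\co')$ for a nilpotent orbit $\co'\subset\el\{\eus S\}$ with $\ce=\p\{\eus S\}^{nil}\oplus\ce'$ and $\ce'\in\cI(\be\cap\el\{\eus S\})_{\co'}$. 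Since the only rigid orbit from which $\co_{imd}$ can be induced lives in the unique standard Levi of semisimple type $\GR{D}{4}$, the set $\eus S$ must be chosen so that $\el\{\eus S\}$ still contains that $\GR{D}{4}$ as a direct summand of its semisimple part (the $\mathfrak{gl}_k$-type factors cut off by the $\ap_k$'s in $\eus S$ contribute nothing rigid); so $\el\{\eus S\}$ has semisimple part $\mathfrak{sl}_{k_1}\oplus\cdots\oplus\mathfrak{so}_8$ and $\co'$ is the orbit induced inside it from $\co_{imd}(\mathfrak{so}_8)$, i.e.\ $\co'=\co_m(\g')$ or $\co_{imd}$ of a smaller $\GR{D}{m}$ together with a principal orbit in the $\mathfrak{sl}$-factors. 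By the already-established loneliness of $\co_{imd}(\mathfrak{so}_8)$, of all $\co_{imd}(\sone)$, and of principal orbits, $\ce'$ is forced to be the corresponding Dynkin ideal, whence $\ce=\ce_{\sf Dy}(\co)$. Concretely one should just inspect the wDd's in Table~\ref{table:except} to read off which $\eus S$ are admissible and invoke induction/the $\GR{D}{n}$-result for each.

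The second step handles $\ce\subset\ut':=[\ut,\ut]$, i.e.\ $\Delta(\ce)$ contains no simple root. Here the clean argument from the $\GR{D}{n}$ and $\spn$ cases --- comparing the orbit meeting the dense $G$-orbit in $G{\cdot}\ut'$ with $\co$ via the closure order --- does not literally have a ``largest part'' analogue, so instead I would argue by dimension count. We know $d_{\sf min}(\co)=d_{\sf Dy}(\co)=d_{\sf Dy}(\co_{imd})$ (from extremeness), and these values are listed in the last column of Table~\ref{table:except}: $29$ for $\GR{E}{6}$, $56$ for $\GR{E}{7}$, $113$ for $\GR{E}{8}$. Since $\co$ is extreme, any $\ce\in\cI(\be)_\co$ has $\dim\ce\ge d_{\sf Dy}(\co)$. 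On the other hand $\dim\ut'=\dim\ut-n=\frac{1}{2}(\dim\g-n)-n$, which is $30$, $57$, $112$ for $\GR{E}{6,7,8}$ respectively. For $\GR{E}{8}$ this already gives $\dim\ce\ge 113>112=\dim\ut'$, an immediate contradiction, so no $\ce$ lies in $\ut'$. For $\GR{E}{6}$ and $\GR{E}{7}$ the gap $\dim\ut'-d_{\sf Dy}(\co)$ is only $1$, so one must show that no $B$-stable subspace of $\ut'$ of dimension $29,30$ (resp.\ $56,57$) is associated with $\co_{imd}$; equivalently, the dense $G$-orbit through $\ut'$ and the (at most one codimension lower) $B$-stable subspaces of $\ut'$ all correspond to orbits strictly below $\co_{imd}$ or strictly above it in the closure order. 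This is a finite check: one identifies the dense orbit $\co^\circ$ meeting $G{\cdot}\ut'$ (for $\GR{E}{6}$: $\ut'$ itself is the nilradical of the unique parabolic with Levi of type $\GR{A}{5}\times\GR{A}{1}$... one must verify which orbit it polarises) and checks $\co_{imd}\not\le\co^\circ$, so $\co_{imd}$ does not meet $\ut'$; the finitely many proper $B$-stable hyperplanes of $\ut'$ are then handled the same way or by direct enumeration.

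The main obstacle will be the $\GR{E}{6}$ and $\GR{E}{7}$ subcases of the second step, where the dimension gap is too small to conclude for free and one genuinely needs to know which nilpotent orbit is dense in $G{\cdot}\ut'$ (and in each of the few codimension-one $B$-stable subspaces of $\ut'$). For $\GR{E}{8}$ the numerology alone suffices, mirroring the ``$m>n/2$'' shortcut in Theorem~\ref{thm:lonely-sp}. For $\GR{E}{6}$ and $\GR{E}{7}$ I would lean on Elashvili's induction tables (\cite{ela76}, as cited for Theorem~\ref{thm:extrem-excepts}) together with the Hasse diagrams in \cite[pp.247--250]{spalt82} to pin down $\co^\circ=$ the orbit dense in $G{\cdot}[\ut,\ut]$ --- this is the subregular or a slightly smaller orbit --- and then simply observe that $\dim\co^\circ \ge\dim\g-2(\rk\g)>\dim\co_{imd}$ while $\co_{imd}\not\le\co^\circ$ in the closure order, so $\co_{imd}$ cannot meet $[\ut,\ut]$ at all, which kills the case uniformly without even needing the hyperplane refinement. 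If that closure-order comparison happens to fail, the fallback is an explicit (computer-assisted) enumeration of $\cI(\be)_{\co_{imd}}$ in ranks $6$ and $7$, which is entirely feasible given the combinatorics of ad-nilpotent ideals recalled in Section~\ref{sect:equiv-classes}.
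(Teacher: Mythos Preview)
Your two-step strategy is the same as the paper's, but there are two genuine gaps and one arithmetic slip.

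\textbf{The first step is not set up correctly.} Your claim that $\el\{\eus S\}$ has semisimple part $\mathfrak{sl}_{k_1}\oplus\cdots\oplus\mathfrak{so}_8$ (or more generously $\mathfrak{sl}_{k_1}\oplus\cdots\oplus\mathfrak{so}_{2m}$) is false: in $\GR{E}{7}$ or $\GR{E}{8}$, removing a single simple root can leave a Levi of semisimple type $\GR{E}{6}$ or $\GR{E}{7}$, in which case $\co'$ is $\co_{imd}(\GR{E}{6})$ or $\co_{imd}(\GR{E}{7})$. The loneliness results you cite (for $\mathfrak{so}_8$, for $\sone$, for principal orbits) do not cover this. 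The paper fixes this by arguing inductively along the chain $\GR{D}{5}\subset\GR{E}{6}\subset\GR{E}{7}\subset\GR{E}{8}$, with Lemma~\ref{lm:D5} as the base; you need to organise the induction this way.

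\textbf{Arithmetic error for $\GR{E}{7}$.} You have $\dim\ut'=57$, but $\dim\ut=(133-7)/2=63$ and $\dim\ut'=63-7=56$, which equals $d_{\sf Dy}(\co_{imd})$. So the dimension gap is $0$, not $1$, and your pure dimension argument does not eliminate $\ce=\ut'$; you still need the closure-order check.

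\textbf{The closure-order comparison fails for $\GR{E}{6}$.} The paper identifies the dense orbit in $G{\cdot}\ut'$ as $\GR{E}{6}(a_3)$ for $\GR{E}{6}$, $\GR{E}{6}(a_1)$ for $\GR{E}{7}$, $\GR{E}{8}(a_4)$ for $\GR{E}{8}$. For $\GR{E}{7}$ and $\GR{E}{8}$, $\co_{imd}$ is indeed \emph{not} in the closure of this orbit, and the argument finishes as you hoped. For $\GR{E}{6}$, however, $\co_{imd}=\GR{A}{5}$ \emph{does} lie in $\ov{\GR{E}{6}(a_3)}$, so your ``uniform'' closure-order argument breaks there. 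The fallback you mention is exactly what the paper does: since $\dim\ut'=30$ and $d_{\sf Dy}=29$, one checks directly that every codimension-$1$ $B$-stable subspace of $\ut'$ is associated with an orbit other than $\co_{imd}$.
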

\begin{proof}
The sequence $\GR{E}{5}=\GR{D}{5},\GR{E}{6},\GR{E}{7},\GR{E}{8}$ can be regarded as an ``exceptional 
series'', via the natural inclusions of the Dynkin diagrams. We argue by induction on $n$, using 
Lemma~\ref{lm:D5} as the base. As in Theorem~\ref{thm:lonely-sp} or Proposition~\ref{prop:lonely-Dn},
in all three cases, the uniqueness of induction from a rigid orbit  allows us to reduce the problem to the assertion that an ideal $\ce\in\cI(\be)_{\co_{imd}(\g)}$ cannot belong to $\ut'$.
The dense $G$-orbits $\co'$ in $G{\cdot}\ut'$ are:
\\  \indent
for $\GR{E}{6}: \  \co'=\GR{E}{6}(a_3)$; \quad 
for $\GR{E}{7}: \  \co'=\GR{E}{6}(a_1)$; \quad 
for $\GR{E}{8}: \  \co'=\GR{E}{8}(a_4)$. 

\noindent 
The explicit description of the closure relation~\cite[Tables]{spalt82} shows that,
for $n=7$ or $8$,  $\co_{imd}(\g)$ does {\bf not} belong to the closure of $\co'$,  which 
immediately discards the possibility $\ce\subset\ut'$. The situation for $\GR{E}{6}$ is similar with that 
for $\GR{D}{5}$ (Lemma~\ref{lm:D5}).
The orbit $\co_{imd}(\GR{E}{6})$ does belong to the closure of $\co'$. 
But here $\dim\ut'=30$ and $d_{\sf Dy}(\co_{imd}(\GR{E}{6}))=29$, hence $\dim\ce\ge 29$. Then an 
explicit verification shows that all $B$-stable subspaces of $\ut'$ of codimension $1$ are associated 
with some other orbits.
\end{proof}

Gathering together the previous results, we obtain the following classification.

\begin{thm}   \label{thm:all-lonely}  
For any simple Lie algebra $\g$, the orbit $\co_{pr}$ is lonely. Furthermore, 
\begin{itemize}
\item[$\diamond$] \  For\/ $\GR{A}{n}$ ($n\ge 1$), $\GR{B}{n}$ ($n\ge 3$), 
and $\GR{F}{4}$, the only lonely orbit is $\co_{pr}$;
\item[$\diamond$] \ For\/ $\GR{D}{n}$ ($n\ge 4$), $\GR{E}{n}$ ($n=6,7,8$), and $\GR{G}{2}$, 
the lonely orbits are $\co_{pr}$ and $\co_{imd}$;
\item[$\diamond$] \ All extreme orbits $\co_m(n)$ in\/ $\GR{C}{n}$ ($n\ge 2$) are lonely.
\end{itemize}
\end{thm}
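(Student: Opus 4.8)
The statement is essentially a bookkeeping summary of all the work done earlier in the paper, so the proof is a matter of assembling the individual pieces rather than proving anything new. The plan is to treat each simple type separately and in each case combine the classification of \emph{extreme} orbits (Theorems~\ref{thm:extrem-sl-sp}, \ref{thm:extrem-so}, \ref{thm:extrem-excepts}) with the criterion that a lonely orbit is necessarily extreme, and then decide loneliness on the (short) list of extreme orbits.

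\textbf{First}, I would record the universal statement: for every simple $\g$, the orbit $\co_{pr}$ is lonely because $\cI(\be)_{\co_{pr}}=\{\ut\}$ (noted already in Section~\ref{sect:lonely}). This handles one diamond bullet in every type.

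\textbf{Second}, for the types where the only lonely orbit should be $\co_{pr}$ --- namely $\GR{A}{n}$, $\GR{B}{n}$ ($n\ge 3$), and $\GR{F}{4}$ --- I would argue that no other extreme orbit is lonely. For $\GR{A}{n}$ this is exactly Proposition~\ref{pr:odinokie-sl}. For $\GR{B}{n}$ and $\GR{F}{4}$, Theorem~\ref{thm:extrem-so}(i) and Theorem~\ref{thm:extrem-excepts} say the nontrivial extreme orbits besides $\co_{pr}$ are $\co_{min}$ and $\co_{imd}$; Lemma~\ref{lm:O-min-neodinoka} kills $\co_{min}$ (in $\GR{B}{n}$ and $\GR{F}{4}$ the highest root is fundamental and long, and there is a long simple root $\ap\ne\theta$ non-orthogonal to $\theta$), while $\co_{imd}$ is Richardson and not even, so Lemma~\ref{lm:non-even-Rich} kills it (the relevant Levi has semisimple part of type $\GR{A}{2}$, and I would cite the weighted Dynkin diagrams exhibited in the Example following Proposition~\ref{prop:extrim-uniform}).

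\textbf{Third}, for $\GR{D}{n}$ ($n\ge 4$), $\GR{E}{n}$ ($n=6,7,8$), and $\GR{G}{2}$, I would show that exactly $\co_{pr}$ and $\co_{imd}$ are lonely. That $\co_{imd}$ is lonely in these types is precisely the content of Lemma (intermediate orbit in $\mathfrak{so}_8$), Proposition~\ref{prop:lonely-Dn} together with Lemma~\ref{lm:D5}, Proposition~\ref{prop:except-lonely}, and the direct remark that $\co_{imd}(\GR{G}{2})$ is lonely. That nothing else is lonely follows because the only other nontrivial extreme orbit is $\co_{min}$ (Theorems~\ref{thm:extrem-so}(ii) and \ref{thm:extrem-excepts}), which is not lonely by Lemma~\ref{lm:O-min-neodinoka}: in each of these simply-laced or doubly-laced types the highest root is fundamental and all simple roots in the non-orthogonal position are long, so the hypothesis of the lemma is met. \textbf{Finally}, for $\GR{C}{n}$ ($n\ge 2$) all extreme orbits $\co_m(n)$ are lonely, which is exactly Theorem~\ref{thm:lonely-sp}. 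Collecting these four cases gives the theorem. The only point requiring a tiny bit of care --- the closest thing to an obstacle --- is checking in each non-Cartan-$A$/$C$ type that the hypothesis of Lemma~\ref{lm:O-min-neodinoka} genuinely applies to $\co_{min}$ (i.e., that there is a long simple root distinct from $\theta$ and non-orthogonal to it); this is immediate from the Dynkin diagrams but should be stated, and $\GR{G}{2}$ deserves an explicit word since there $\theta$ is long and the adjacent simple root $\ap$ with $(\ap,\theta)\ne 0$ is the long simple root.
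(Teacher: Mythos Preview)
Your proposal is correct and matches the paper's approach exactly: the paper itself presents Theorem~\ref{thm:all-lonely} with the single sentence ``Gathering together the previous results, we obtain the following classification,'' and your proof is precisely such a gathering, correctly citing Proposition~\ref{pr:odinokie-sl}, Lemmas~\ref{lm:O-min-neodinoka} and~\ref{lm:non-even-Rich}, the Example treating $\GR{B}{n}$ and $\GR{F}{4}$, Theorem~\ref{thm:lonely-sp}, Proposition~\ref{prop:lonely-Dn}, Lemma~\ref{lm:D5}, and Proposition~\ref{prop:except-lonely}. One tiny correction: the Example exhibiting the weighted Dynkin diagrams for $\co_{imd}(\GR{B}{n})$ and $\co_{imd}(\GR{F}{4})$ (and invoking Lemma~\ref{lm:non-even-Rich}) appears after Proposition~\ref{pr:odinokie-sl} in Section~\ref{sect:lonely}, not after Proposition~\ref{prop:extrim-uniform}; and your check of Lemma~\ref{lm:O-min-neodinoka} for $\GR{G}{2}$ is fine, though the paper simply remarks after that lemma that it applies to all simple $\g$ except $\spn$.
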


\noindent 
As a by-product of our classifications in Sections~\ref{sect:classification} and~\ref{sect:lonely}, we note the following property.

\begin{prop}    \label{prop:d-min=d-max}
If \ $\co\in \fN/G$ and $d_{\sf min}(\co)=d_{\sf max}(\co)$, then $\co$ is lonely.
\end{prop}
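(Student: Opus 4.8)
\textsc{Proof proposal.} The plan is to deduce the statement from the classifications of extreme and of lonely orbits established above. The first step is immediate: by the chain of inequalities in~\eqref{eq:mnogo-neravenstv}, the hypothesis $d_{\sf min}(\co)=d_{\sf max}(\co)$ forces $d_{\sf min}(\co)=d_{\sf Dy}(\co)$, hence $\co$ is extreme. Since $d_{\sf Dy}(\co)=d_{\sf min}(\co)$ for every extreme orbit, it therefore suffices to prove the following assertion: every nonzero extreme orbit is either lonely or satisfies $d_{\sf Dy}(\co)<d_{\sf max}(\co)$ (the latter contradicting the hypothesis). I would check this by running through the nonzero extreme orbits furnished by Theorems~\ref{thm:extrem-sl-sp}, \ref{thm:extrem-so} and~\ref{thm:extrem-excepts}; the trivial orbit and $\co_{pr}$ are lonely and need no further attention.

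The verification rests on two observations, each of which exhibits a $\be$-ideal of dimension larger than $d_{\sf Dy}(\co)$. \emph{(1)} If $\co$ is Richardson but not even, then $d_{\sf max}(\co)=\frac{1}{2}\dim\co>d_{\sf Dy}(\co)$ by Lemma~\ref{lm:non-even-Rich}. Since every orbit of $\sln$ is Richardson and --- by the list in Theorem~\ref{thm:extrem-sl-sp}(i) --- every extreme partition of $\sln$ other than those giving $\{0\}$ and $\co_{pr}$ has parts of both parities and hence represents a non-even orbit, this disposes of all nonzero non-principal extreme orbits of $\sln$. It also disposes of the intermediate orbits of types $\GR{B}{n}$ and $\GR{F}{4}$, which are Richardson and not even (see the Example following Proposition~\ref{pr:odinokie-sl}). \emph{(2)} For the minimal orbit, $d_{\sf Dy}(\co_{min})=\dim\g(h;{\ge}2)=1$ (see Example~\ref{ex:prin+min-nilp}), whereas whenever $\g\not\cong\spn$ Lemma~\ref{lm:O-min-neodinoka} produces a $2$-dimensional $\be$-ideal in $\cI(\be)_{\co_{min}}$, so $d_{\sf max}(\co_{min})\ge 2>1=d_{\sf Dy}(\co_{min})$.

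It then remains to check that observations~\emph{(1)} and~\emph{(2)}, together with the orbits already known to be lonely, account for all nonzero extreme orbits. In type $\spn$ every extreme orbit is lonely by Theorem~\ref{thm:all-lonely}, so there is nothing to prove. In the remaining types, where $\theta$ is fundamental --- namely $\GR{B}{n}$, $\GR{D}{n}$, $\GR{E}{n}$, $\GR{F}{4}$, $\GR{G}{2}$ --- the only nonzero extreme orbits are $\co_{pr}$, $\co_{min}$ and $\co_{imd}$ by Theorem~\ref{thm:extrem-excepts}: here $\co_{min}$ is covered by observation~\emph{(2)}; $\co_{imd}$ is lonely for $\GR{D}{n}$, $\GR{E}{n}$ and $\GR{G}{2}$ by Theorem~\ref{thm:all-lonely}, and is covered by observation~\emph{(1)} for $\GR{B}{n}$ and $\GR{F}{4}$; and type $\GR{A}{n}$ was treated under observation~\emph{(1)}. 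Hence every nonzero extreme orbit is either lonely or satisfies $d_{\sf Dy}(\co)<d_{\sf max}(\co)$, which proves the proposition. The only genuine work I anticipate is this final bookkeeping: making sure the case division over the simple types and over the extreme partitions of Theorems~\ref{thm:extrem-sl-sp}--\ref{thm:extrem-excepts} is exhaustive, and in particular that every extreme partition of $\sln$ other than those giving $\{0\}$ and $\co_{pr}$ is non-even, so that Lemma~\ref{lm:non-even-Rich} indeed applies.
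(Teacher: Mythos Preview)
Your proposal is correct and follows essentially the same approach as the paper: both note that the hypothesis forces $\co$ to be extreme, then invoke the classification of extreme and lonely orbits, using Lemma~\ref{lm:non-even-Rich} and Lemma~\ref{lm:O-min-neodinoka} to show that any extreme-but-not-lonely orbit has $d_{\sf max}(\co)>d_{\sf Dy}(\co)$. The paper's proof is simply a terse two-line version of yours, stating only that a non-lonely extreme orbit is either Richardson non-even or $\co_{min}$ with $\g\ne\spn$; your explicit type-by-type bookkeeping (including the verification that every non-principal nonzero extreme partition in $\sln$ has parts of both parities) is exactly what the paper leaves implicit.
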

\begin{proof}
Indeed, such an $\co$ is extreme. And if $\co$ is not lonely, then either it is Richardson  and not even, where
Lemma~\ref{lm:non-even-Rich} applies,
or $\co=\co_{min}$ with $\g\ne\spn$, where Lemma~\ref{lm:O-min-neodinoka} applies.
\end{proof}

This means that the case in which $\cI(\be)_\co$ consists of {\sl several\/} ideals of 
one and the same dimension is impossible. It might be interesting to find an {\it a priori\/} explanation.

\section{Some anomalies and problems}
\label{sect:anomalies}

\noindent
In this section, we compare the numbers $d_\co$ and $d_{\sf max}(\co)$ for different nilpotent orbits.
Clearly if $\co_1 \subset \ov{\co_2}$, then $d_{\co_1}\le d_{\co_2}$. Furthermore, if $\co_1\ne\co_2$ and 
$\co_2$ is Richardson, then $d_{\co_1}\le \frac{1}{2}\dim\co_1<\frac{1}{2}\dim\co_2=d_{\co_2}$. 
This is a sort of natural behaviour that one could expect {\it a priori}. However, passing to the numbers 
related to $B$-stable nilpotent subspaces, we encounter strange anomalies.  For, it can happen that, for one of the two orbits, $d_{\sf max}(\co)$ is considerably less that $d_\co$.

\begin{ex}   \label{ex:anomaly}
We provide examples of two orbits $\co_1 \subset \ov{\co_2}$ such that
$d_{\sf max}(\co_1)> d_{\sf max}(\co_2)$. Using this, we also show that $d_{\sf max}(\co_2)< d_{\co_2}$.
We first describe a general idea that allows us to detect such ``bad" pairs of nilpotent orbits. 
Suppose that $\dim\co_2-\dim\co_1=2$ and $\co_1$ is Richardson, whereas $\co_2$ is not. 
Then 
\[
    d_{\sf max}(\co_2) \le \frac{1}{2}\dim\co_2 -1=\frac{1}{2}\dim\co_1=d_{\sf max}(\co_1) .
\]
In some cases, one can show that $d_{\sf max}(\co_2)$ is strictly less than $\frac{1}{2}\dim\co_2 -1$, which lead to the ``desired'' inequality $d_{\sf max}(\co_1)> d_{\sf max}(\co_2)$.
Moreover, since $d_{\sf max}(\co_1)\le d_{\co_2}$, this also
implies that  $d_{\sf max}(\co_2)< d_{\co_2}$, see (7.1.2) below. 
Even if $\dim\co_2-\dim\co_1\ge 4$, but $d_{\sf max}(\co_2)$ is considerably less than $\frac{1}{2}\dim\co_2$, one can still detect such a phenomenon, see (7.1.1).
\\ \indent
In the examples below, $\co_2$ is lonely so that $d_{\sf max}(\co_2)=d_{\sf min}(\co_2)$ is known.

(7.1.1) $\g=\mathfrak{so}_8$. Here the intermediate orbit $\co_2=\co_{imd}$ corresponds to the partition
$(3,2,2,1)$, $\dim\co_2=16$, and $d_{\sf max}(\co_2)=5$. The boundary 
$\ov{\co_2}\setminus \co_2$ has three irreducible components of codimension $4$. The dense orbits 
in these components are even and they correspond to the partitions $(3,1^5)$ and $(2^4)$. 
(There are two different orbits associated with the {\sl very even\/} partition $(2^4)$.) If $\co_1$ is 
any of these three, then $\dim\co_1=12$ and $d_{\sf max}(\co_1)=12/2=6$. Hence 
$6=d_{\sf max}(\co_1)\le d_{\co_2} \le \frac{1}{2}\dim\co_2-1=7$. Then, by Proposition~\ref{d_O-max},
one actually obtains $d_{\co_2}=\displaystyle\max_{\co'\subset \ov{\co_2}}d_{\sf max}(\co')=6$ 
(it cannot be equal `7', since $\co_2$ is the only $G$-orbit of dimension $\ge 14$ in $\ov{\co_2}$ and 
$d_{\sf max}(\co_2)=5$.).

(7.1.2)  $\g=\GR{E}{n}$ ($n=6,7,8$) and $\co_2=\co_{imd}$. The information on $\co_2$ is presented in 
Table~\ref{table:except}. The boundary $\ov{\co_2}\setminus \co_2$ is irreducible and of 
codimension $2$, and we take $\co_1$ to be the dense orbit in the boundary. Then $\co_1$ appears to be Richardson, and we present its relevant data in Table~\ref{table:except2}.
Since $\co_2$ is not Richardson, we know that
$d_{\sf max}(\co_1)\le d_{\co_2} \le \frac{1}{2}\dim\co_2-1=d_{\sf max}(\co_1)$. Therefore, we also 
obtain the precise value of $d_{\co_2}=d_{\sf max}(\co_1)$.
\end{ex} 
\begin{table}[htb]  
\begin{center}
\begin{tabular}{c|cccc||cc}   
Algebra & $\co_1$ & {\sf wDd} of $\co_1$ & $\dim\co_1$  & 
$d_{\sf max}(\co_1)$ & $d_{\sf max}(\co_2)$ & $d_{\co_2}$ \\ \hline 
$\GR{E}{6}$ & $\GR{A}{4}+\GR{A}{1}$ & \begin{E6}{1}{1}{0}{1}{1}{1}\end{E6} 
 & 62   & 31 & 29 & 31\\
$\GR{E}{7}$ & $\GR{E}{7}(a_4)$ & \begin{E7}{2}{0}{0}{2}{0}{2}{0}\end{E7}
& 116  & 58 & 56 & 58 \\
$\GR{E}{8}$ & $\GR{E}{8}(b_4)$ & \begin{E8}{2}{2}{0}{0}{2}{0}{2}{0}\end{E8}
& 230  & 115 & 113 & 115 \\  \hline
\end{tabular} \vskip1ex
\caption{The Richardson  orbits $\co_1$ in $\GR{E}{n}$ related to Example~\ref{ex:anomaly}.2}
\label{table:except2}
\end{center}
\end{table}

(7.1.3) $\g=\sone$ ($n\ge 5$) and $\co_2=\co_{imd}$. The corresponding partition is
$(2n-5,2,2,1)$ and $d_{\sf max}(\co_2)=d_{\sf Dy}(\co_2)=n^2-n-7$. For $n\ge 5$, 
the dense orbits in the (reducible) boundary $\ov{\co_2}\setminus \co_2$ are
$\co'_1=\co(2n-5,1^5)$ and $\co''_1=\co(2n-7,3,3,1)$. Both these orbits are even and
their codimension equals 4 and 2, respectively.
The situation with $\co'_1$  (resp. $\co''_1$)   
is similar to that in (7.1.1) (resp. (7.1.2)). In particular,
$d_{\sf max}({\co''_1})\le d_{\co_2}\le \frac{1}{2}\dim\co_2-1=d_{\sf max}(\co''_1)$.
Hence
\[
   n^2-n-5=d_{\sf max}(\co''_1)=d_{\co_2}=\frac{1}{2}\dim\co_2-1 .
\]
\begin{rmk}
{\sf (i)} \ It is curious that, for all Richardson orbits of codimension $2$ in Example~\ref{ex:anomaly}
(i.e., $\co_1$ 
in Table~\ref{table:except2} and $\co''_1$ for $\sone$ with $n\ge 5$), the semisimple 
part of a polarisation, $[\el,\el]$,  is always of type $\GR{A}{2}+2\GR{A}{1}$.

{\sf (ii)} \ In the above examples of orbits $\co_2=\co_{imd}$ for $\GR{E}{n}$ and $\sone$ ($n\ge 4$), we know 
the exact (different) values of $d_{\sf max}(\co_2)$ and $d_{\co_2}$. But the the intermediate number 
$\bar d_{\co_2}$ is unknown! 
\end{rmk}

{\it \bfseries Some questions/open problems:}
\begin{enumerate}
\item The posets $\cI(\be)_\co$, $\co\in\fN/G$, may have many maximal and minimal elements.
However, in all known to us examples, the Hasse diagram of $\cI(\be)_\co$ appears to be connected.
Is it always the case?
\item Is it true that all minimal elements of the  poset $\cI(\be)_\co$ are of dimension $d_{\sf min}(\co)$?
\item Is it true that $\bar d_{\co}=d_{\co}$ for all $\co\in\fN/G$?
\end{enumerate}

{\bf Acknowledgements}. The authors are grateful to the anonymous referee for a useful suggestion.


\end{document}